\lstdefinestyle{mystyle}{
    language=Mathematica,
    backgroundcolor=\color{lightgray},
    keywordstyle=\ttfamily,
    basicstyle=\ttfamily\small,
    breakatwhitespace=false,         
    breaklines=true,                 
    captionpos=b,                    
    keepspaces=true,                 
    numbers=none,                    
    showspaces=false,                
    showstringspaces=false,
    showtabs=false,                  
    tabsize=2
}
\begin{document}
\title{Spectrum of the Laplacian on the Fricke-Macbeath surface}
\author{Chul-hee Lee}
\address{June E Huh Center for Mathematical Challenges, Korea Institute for Advanced Study, Seoul 02455, Korea}
\email{chlee@kias.re.kr}
\date{\today}
\begin{abstract}
The Fricke-Macbeath surface is the unique Hurwitz surface of genus 7 with 504 conformal automorphisms.
In this paper, we prove that the first eigenvalue of the Laplacian on the Fricke-Macbeath surface has a sevenfold multiplicity and contained in the interval $[1.23, 1.26]$.
Further, we numerically identify the 7-dimensional representation of its automorphism group corresponding to the eigenspace associated to the first eigenvalue.
We also determine the Dirichlet domain centered at 0 for a Fuchsian group that uniformizes the Fricke-Macbeath surface, identifying the algebraic coordinates for its vertices.
\end{abstract}
\maketitle
\section{Introduction}
In 1893, Hurwitz proved that a compact Riemann surface \(X\) of genus \(g \geq 2\) has at most \(84(g - 1)\) conformal automorphisms~\cite{hurwitz1893}.
This upper bound is now known as the Hurwitz bound, and a Riemann surface attaining this bound is called a Hurwitz surface.

Notably, for Riemann surfaces of low genera, those meeting the Hurwitz bound are rare.
One prominent example is the Klein quartic~\cite{Klein79}, described by the equation
\[
x_0^3x_1 + x_1^3x_2 + x_2^3x_0 = 0,
\]
in complex projective space.
The Klein quartic predates Hurwitz's seminal work and stands as the first of its kind to have ever been discovered.
This surface possesses 168 conformal automorphisms, with its automorphism group being isomorphic to the simple group $\PSL_2(7)$.
For a comprehensive exploration of the history and significance of Klein's quartic, as well as the subsequent advancements inspired by it, one can refer to~\cite{MR1722410}.

In 1899, Fricke made a significant contribution to the subject by unveiling another Hurwitz surface, particularly one of genus 7~\cite{MR1511059}.
This surface possesses the automorphism group $\PSL_2(8)$, a simple group of order 504.
Subsequently, in 1965, without being aware of Fricke's earlier work, Macbeath determined its algebraic model~\cite{macbeath1965}. 
This surface is now known as the Fricke-Macbeath surface, and alongside the Klein quartic, it holds the distinction of being one of the only two instances of Hurwitz surfaces with genera less than 14.
Interestingly, in the case of genus 14, there exist three distinct Hurwitz surfaces with the same automorphism group of order 1092.

The abundant symmetries inherent in Hurwitz surfaces facilitates intriguing interactions with various structures associated with a surface, making their study compelling.
This interplay makes Hurwitz surfaces a fertile ground for researchers across diverse fields.
Notably, in the realm of arithmetic, some of these surfaces hold significant recognition as Shimura curves, a fact highlighted by Shimura~\cite{MR0204426}.
Their connection with Galois theory is also a subject of discussion in Serre's enlightening letter to Abhyankar~\cite{MR1272022}.
For their role in the theory of dessins d'enfants, see the book by Jones and Wolfart~\cite{MR3467692}.

In this paper, we explore the interaction between symmetries and the spectral properties of such a surface.
According to the uniformization theorem, $X$ is conformally equivalent to $\Gamma\backslash \uhp$, where
$\uhp$ is the upper half-plane in the complex plane and $\Gamma$ is discrete, torsion-free subgroup of $\PSL_2(\R)$.
Then $\Aut(X) \cong N(\Gamma) / \Gamma$, where $N$ is the normalizer of $\Gamma$ in $\PSL_{2}(\R)$. 
In particular, $X$ is a Hurwitz surface when $N(\Gamma)$ is the $(2,3,7)$-triangle group.
Now $X$ admits a hyperbolic metric, namely, the Riemannian metric of constant Gaussian curvature -1.
The elements of $\Aut(X)$ act as orientation-preserving isometries of $X=\Gamma\backslash \uhp$.
The Laplacian $\Delta$ for $X$ has non-negative discrete eigenvalues
\[
0=\la_0< \la_1 \leq \la_2 \leq \ldots \to \infty.
\]
An interesting feature of a Riemann surface with many automorphisms is that each eigenspace of the Laplacian can exhibit high multiplicities, as each of these eigenspaces of the Laplacian serves as a representation of $\Aut(X)$.

There are only a few known cases of Riemann surfaces with non-trivial automorphism groups for which we have established values for the multiplicities $m_1$ of $\la_1$.
To the best of our knowledge, the Bolza surface and the Klein quartic are the only known cases.
The Bolza surface, despite not meeting the Hurwitz bound, is a genus 2 Riemann surface renowned for having the highest number of conformal symmetries, totaling 48, within its genus.
In a study by Jenni~\cite{jenni1984}, it was asserted that $m_1=3$ for the Bolza surface; however, Cook~\cite{cook2018} identified a gap in its proof.
Cook subsequently narrowed down the possible range to $3 \leq m_1 \leq 4$.
A subsequent investigation conducted in \cite{FBP21} successfully confirmed that the correct value for $m_1$ is indeed 3.
Concerning the Klein quartic, Cook's analysis indicated that the multiplicity of the first eigenvalue lies within the range of 6 to 8.
A rigorous proof establishing $m_1=8$ is also provided in~\cite{FBP21}.

Surfaces with many automorphisms like the Bolza surface and the Klein quartic often emerge as solutions to extremal problems concerning both $m_1$ and $\la_1$.
In ~\cite{FBP21}, it was shown that the Klein quartic attains the maximal $m_1$ among all hyperbolic surfaces of genus 3.
In a work by Strohmaier and Uski \cite{MR3009726}, they conjectured that the Bolza surface maximizes $\la_1$ among all hyperbolic surfaces of genus 2 based on numerical experiments.
A recent study \cite{kravchuk2023automorphic} provides additional evidence supporting the conjecture that both the Bolza surface and the Klein quartic maximize $\la_1$ among hyperbolic surfaces of genus 2 and 3, respectively.

Our main result is as follows:
\begin{thmnn}
The multiplicity of the first eigenvalue $\la_1$ of the Fricke-Macbeath surface is 7, and $\la_1\in [1.23, 1.26]$.
\end{thmnn}
Our primary tool for the proof is the Selberg trace formula.
We make use of Vogeler's results~\cite{Vogeler2003} concerning the length spectrum of the Fricke-Macbeath surface, specifically, we require information about the conjugacy classes of the two shortest lengths in the spectrum.
When applying the Selberg trace formula, we utilize various types of test functions. 
These include the functions featured in~\cite[Section 4]{MR2338122} and~\cite[Section 5]{MR4186122}, which played a crucial role in confirming the Selberg eigenvalue conjecture for numerous congruence subgroups with small conductor.
We also employ the test functions introduced in the context of the sphere packing problem~\cite[Section 7]{MR1973059}, which were recently employed in the investigation of spectral properties of hyperbolic surfaces in~\cite{FBP21}.

After establishing the multiplicity of an eigenvalue, the subsequent natural question is to identify the associated eigenspace as a representation.
To address this, we present a numerical method for computing the character of a representation obtained from a set of eigenfunctions.
This method enables us to formulate a conjecture regarding which 7-dimensional representation of $\PSL_2(8)$ corresponds to the representation associated with $\la_1$ of the Fricke-Macbeath surface; see Conjecture~\ref{conj:rep7}.
The application of these algorithms to compute the characters for other Riemann surfaces with many automorphisms will be discussed in a separate work.

To conduct a numerical calculation based on the finite element method, we determine a fundamental domain for the Fuchsian group that uniformizes the Fricke-Macbeath surface using Poincar\'{e}'s polyhedron theorem~\cite{maskit1971}, along with its side-pairing transformations.
It is a hyperbolic polygon with 42 sides, which is actually the Dirichlet domain centered at 0.
We offer a visualization of this fundamental domain, updating the classic Fricke-Klein illustration~\cite{MR0352287}; see Figures \ref{fig:fricke_tess1} and \ref{fig:fricke_tess2} for a comparison.
Unlike the case of the Klein quartic in Figure~\ref{fig:237_klein}, we find that this polygon is not completely tessellated by the fundamental $(2,3,7)$-triangles.
We note that the fundamental domain discussed in ~\cite{MR1511059} differs from ours.

\begin{figure}
    \begin{minipage}{0.5\textwidth}
        \centering
        \includegraphics[width=\linewidth]{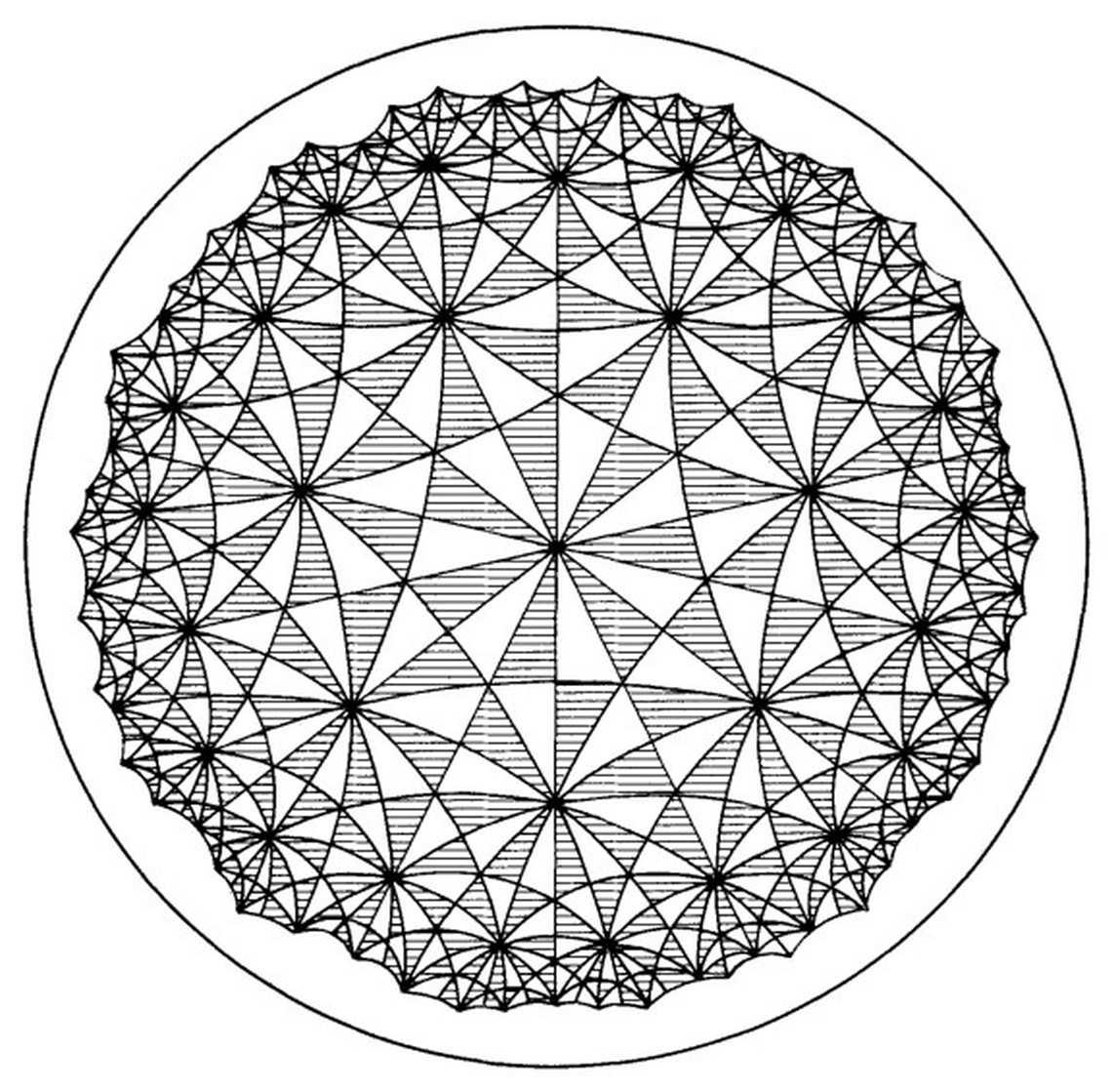}
        \caption{Tessellation of the unit disk by (2,3,7)-triangles \cite[p. 109]{klein1890vorlesungen}}
        \label{fig:fricke_tess1}
    \end{minipage}%
    \begin{minipage}{0.5\textwidth}
        \centering
        \includegraphics[width=\linewidth]{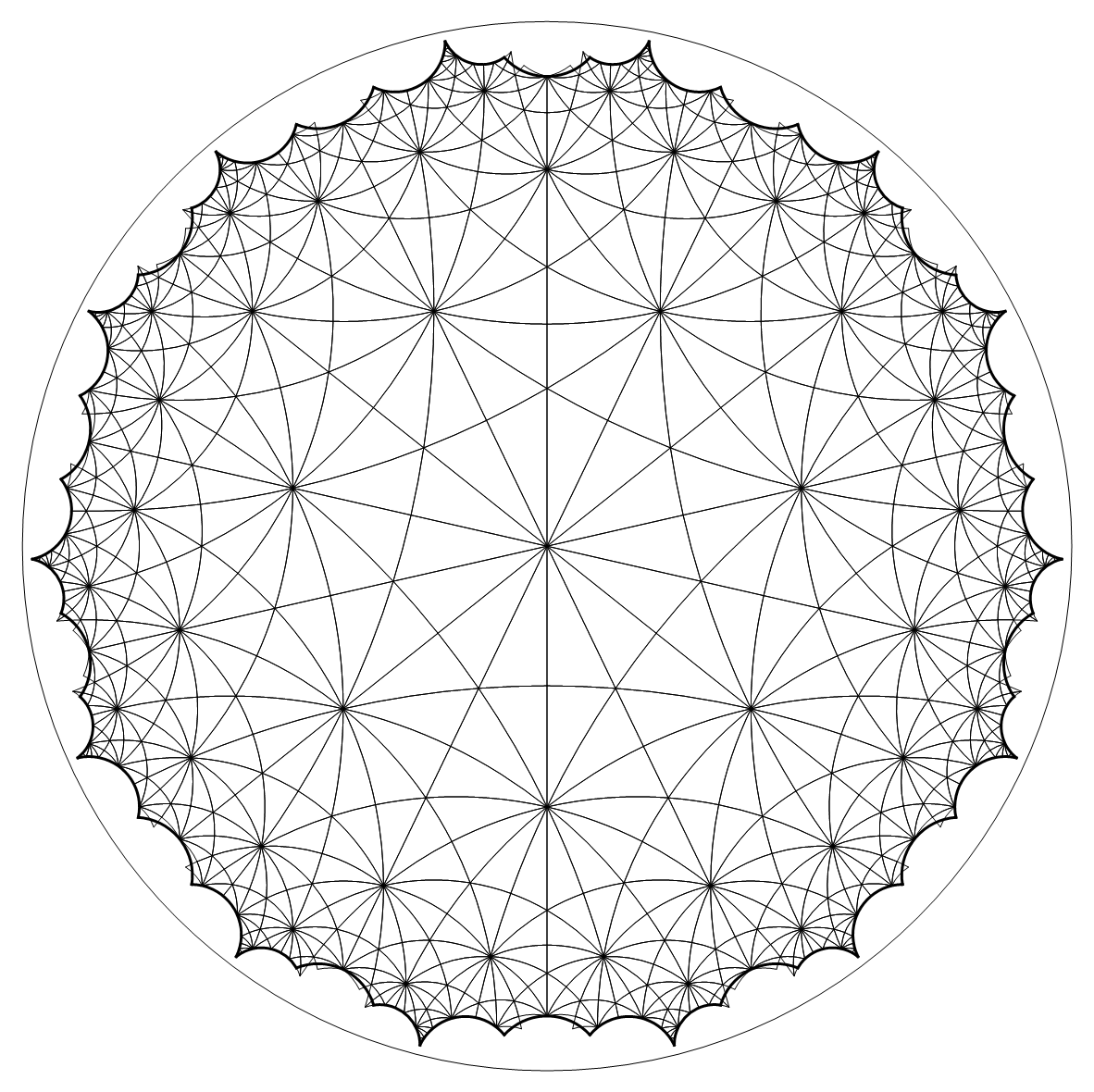}
        \caption{Fundamental domain for the Fricke-Macbeath surface (boundary in thick lines)}
        \label{fig:fricke_tess2}
    \end{minipage}
\end{figure}

The paper is organized as follows.
In Section 2, we delve into the background, discussing topics like the Laplacian on a Riemannian manifold, the hyperbolic plane, and the Selberg trace formula.
Section 3 focuses on presenting an explicit uniformization of the Fricke-Macbeath surface and the identification of its fundamental domain.
In Section 4, we prove our main result on the first eigenvalue of the Fricke-Macbeath surface.
Lastly, in Section 5, we present a numerical method to determine the character of a representation obtained from a set of eigenfunctions, and propose a conjecture identifying the 7-dimensional representation of $\PSL_2(8)$ associated to the first eigenvalue of the Fricke-Macbeath surface.

The accompanying computer code for this paper is available at \url{https://github.com/chlee-0/fricke_macbeath}.

\section{Background}
\subsection{Laplacian on Riemannian manifolds}
In this subsection, we provide a brief review of the properties of the Laplacian on a Riemannian manifold.
For a detailed discussion on this topic, see~\cite{MR2742784}.
Consider a Riemannian manifold $M$ with the metric given by
\[ds^2 = \sum_{j,k} g_{jk} dx^j dx^k.\]
The Laplacian \(\Delta\), acting on \( C^\infty(M) \), can be locally expressed by:
\[ 
\Delta = -\sum_{j,k} \frac{1}{\sqrt{\det g}} \frac{\partial}{\partial x^j}
\left( \sqrt{\det g}\, g^{jk} \frac{\partial}{\partial x^k} \right),
\]
where \( g^{jk} \) are the entries of the inverse matrix of \( (g_{jk}) \) and \( \det g \) stands for the determinant of the matrix \( (g_{jk}) \).
For a compact connected manifold, $\Delta$ has non-negative discrete eigenvalues
\[
0=\la_0< \la_1 \leq \la_2 \leq \ldots \to \infty,
\]
with corresponding \(C^{\infty}\)-eigenfunctions given by 
\[
\Delta \phi_{i}=\la_{i} \phi_{i},
\]
which together provide an orthonormal basis for \(L^{2}(M)\).

For a Riemannian manifold, the Laplacian $\Delta$ commutes with isometries of $M$.
This implies that for an eigenfunction $f$ of $\Delta$ and an isometry $g$, 
$f\circ g^{-1}$ is an eigenfunction with the same eigenvalue as $f$.
Consequently, the complex vector space of eigenfunctions with a given eigenvalue becomes a representation for a group of isometries of $M$.

\subsection{Two models of hyperbolic plane}
In this subsection, we review the two models of the hyperbolic plane. 
One such model is the upper half-plane, denoted by \( \uhp \). This consists of complex numbers \( z \) where the imaginary part is positive: \( \uhp = \{ z \in \C : \Im(z) > 0 \} \).
This model of the hyperbolic plane carries a metric:
\[ ds^2 = \frac{dx^2 + dy^2}{y^2},\]
which yields a constant Gaussian curvature $-1$.
The distance $\distH(z,z')$ between two points $z$ and $z'$ is given by 
\[
\cosh \distH(z,z') = 1 + \frac{|z-z'|^2}{2\Im z \Im z'}.
\]
Geodesics in \( \uhp \) are represented as either vertical half-lines or semicircles that intersect the $x$-axis perpendicularly.
For any two distinct points in \( \uhp \), there is a unique geodesic connecting them. 
The Laplacian is given by
\[ -\Delta = y^2 \left( \frac{\partial^2}{\partial x^2} + \frac{\partial^2}{\partial y^2} \right). \]

The group \(\PSL_2(\R)=\SL_2(\R)/\{\pm 1\}\) acts isometrically on \(\uhp\).
The action is given by \(z \mapsto gz = \frac{az+b}{cz+d}\), where \(g = \pm \pmat abcd \in\PSL_2(\R)\).
A non-trivial element \( g \) of \( \PSL_2(\R) \) is called hyperbolic if
$|\Tr(g)| > 2$.
This is equivalent to 
\( g \) fixing two distinct points on the boundary \( \partial \uhp\) of the upper half-plane.
The \textit{length} \( \ell(g) \) of a hyperbolic element \( g \) is defined as:
\[
\ell(g) = 2 \arcosh\left(\frac{|\Tr(g)|}{2}\right).
\]

The unit disk \( \disk \) provides another model for the hyperbolic plane.
It is defined by \( \disk = \{ z \in \mathbb{C} : |z| < 1 \} \).
In this model, the metric is expressed as:
\[ ds^2 = \frac{4(dx^2 + dy^2)}{(1 - x^2 - y^2)^2}\]
which results in a constant Gaussian curvature $-1$.
An isometry that maps \( \uhp \) onto \( \disk \) is given by:
\[
z \mapsto \frac{z-i}{z+i},
\]
which is also a conformal equivalence.
The distance, denoted as $\distD(z,z')$, between two points $z$ and $z'$ in the disk is
\[
\cosh \distD(z,z') = 1 + \frac{2|z-w|^2}{(1-|z|^2)(1-|w|^2)}.
\]
The Laplacian in this setting reads
\[ -\Delta = \frac{(1 - x^2 - y^2)^2}{4} \left( \frac{\partial^2}{\partial x^2} + \frac{\partial^2}{\partial y^2} \right). \]
The group of orientation-preserving isometries of \(\disk\) is given by the group 
\[ \PSU(1,1) = \left\{\pm \pmat{a}{\conj{b}}{b}{\conj{a}} : a, b \in \mathbb{C},\, \abs{a}^{2}-\abs{b}^{2}=1 \right\}. \]
The notion of hyperbolic elements for $\PSU(1,1)$ and their lengths can be defined in the same way.

\subsection{Selberg trace formula}
Let $X=\Gamma\backslash \uhp$ be a compact Riemann surface of genus $g\geq 2$, where $\Gamma$ is discrete, torsion-free subgroup of $\PSL_2(\R)$.
Hence, every non-trivial element of $\Gamma$ is hyperbolic.

The Selberg trace formula establishes a link between the conjugacy classes of $\Gamma$ and and the eigenvalues of the Laplacian.
Before delving into the formula itself, we present some requisite notation.

Given a positive parameter $\varepsilon>0$, define
\[
\mathscr{S}_{\varepsilon}=\left\{r \in \C : | \Im r \mid<\frac{1}{2}+\varepsilon\right\}.
\]
Suppose that $h: \mathscr{S}_{\varepsilon} \rightarrow \C$ is a holomorphic even function fulfilling the condition:
\begin{equation}
h(r)=O\left(1+|r|^{2}\right)^{-1-\varepsilon} \text { uniformly on } \mathscr{S}_{\varepsilon}.
\label{eq:hdecay}
\end{equation}
The Fourier transform of $h$ is then given by:
\[
g(u)=\what{h}(u)=\frac{1}{\sqrt{2 \pi}} \int_{-\infty}^{+\infty} e^{-i r u} h(r)\, d r
, \quad u \in \R.
\]
The pair $h, g$ is called an \textit{admissible transform pair}.

A non-trivial element $S$ of $\Gamma$ is primitive if it does not admit a representation in the form $S = R^m$ for some $R\in \Gamma$ and $m\geq 2$.
For any hyperbolic element $\gamma\in \Gamma$, there corresponds a unique primitive $\gamma_0$ such that $\gamma=\gamma_0^{m}$ for an integer $m \geq 1$; see~\cite[Lemma 9.2.6]{MR2742784}.
The quantity $\La(\gamma)$ is defined as $\ell\left(\gamma_{0}\right)$.

We now state Selberg's trace formula~\cite[Theorem 9.5.3]{MR2742784}.
Let
\[
r(\la) = \begin{cases}
i \sqrt{\frac{1}{4}-\la} & \text{if } 0 \leq \la < \frac{1}{4} \\
\sqrt{\la - \frac{1}{4}} & \text{if } \la \geq \frac{1}{4}
\end{cases}.
\]
Denote by $r_n$ the quantity $r(\la_n)$.
Let \(\calC(\Gamma)\) be the set of conjugacy classes of hyperbolic elements, and $\{\gamma\}$ be the conjugacy class containing $\gamma \in \Gamma$.
With a given admissible transform pair $h,g$, we have
\begin{equation}
\label{eq:STF1}
\sum_{n=0}^{\infty} h(r_{n})
=
\frac{\area(X)}{4 \pi} \int_{-\infty}^{+\infty} r h(r) \tanh (\pi r)\, dr+
\frac{1}{\sqrt{2\pi}}\sum_{\{\gamma\} \in \calC(\Gamma)} \frac{\La(\gamma)}{2 \sinh(\ell(\gamma)/2)} g(\ell(\gamma)).
\end{equation}
All sums and integrals in the formula are absolutely convergent.

Our chosen convention for the Fourier transform is different from the one presented in~\cite{MR2742784}.
This difference leads to an altered constant before the sum on the right-hand side of equation \eqref{eq:STF1}.

In literature, the Selberg trace formula is often phrased in terms of closed geodesics of $X$.
In fact, there is a bijection between the oriented closed geodesics of \(X = \Gamma\backslash \uhp\) and the conjugacy classes in \( \Gamma - \{\id\} \). 
Under this bijection, the length \( \ell(T) \) of a conjugacy class containing $T$ matches the length of the corresponding closed geodesic $\gamma_{T}$.
For a detailed description of this relationship, see ~\cite[Section 9.2]{MR2742784}.

\section{Explicit uniformization of the Fricke-Macbeath surface and its fundamental domain}
\subsection{Hurwitz surfaces and the Fricke-Macbeath surface}\label{subsec:hurwitzsurfaces}
Hurwitz demonstrated in ~\cite[§7]{hurwitz1893} that a group of order \(84(g - 1)\) can act as the automorphism group of a compact Riemann surface of genus \(g\geq 2\) precisely when it is a quotient of the $(2,3,7)$-triangle group $\Delp$, whose presentation is given by
\[\langle a, b \mid a^{3} = b^{7} = (ab)^{2} = 1 \rangle.\]
This transforms the challenge of finding surfaces with \(84(g - 1)\) automorphisms into a purely group-theoretic problem.
Consider $\Delp$ as a set of isometries acting on the hyperbolic plane, say, $\disk$ and let $\Gamma$ be a proper normal subgroup of it, with a finite index.
Notably, any non-trivial element of such $\Gamma$ is hyperbolic.
To clarify, there are no elements of torsion or parabolic type within $\Gamma$; for further references, see~\cite[p. 531]{macbeath1965} and~\cite[p. 4]{Vogeler2003}.
Consequently, the quotient space $\Gamma\backslash \disk$ is a compact Riemann surface and a Hurwitz surface, with an automorphism group isomorphic to $\Delp/\Gamma$.
For a general review of this topic, see~\cite{MR1041434, MR1722414}.

For the Fricke-Macbeath surface $\X$, the unique Hurwitz surface of genus 7, its automorphism group is isomorphic to the simple group $\PSL_2(8)$, having order 504.
Presentations of the group $PSL_2(8)$ can be found in various references such as~\cite{burnside1899},~\cite[p. 97]{coxeter_moser_1957} and~\cite{MR0174621}.
Given that these presentations realize $PSL_2(8)$ as a quotient of the $(2,3,7)$-triangle group, they provide a way to explicitly obtain $\X$, as a quotient of the hyperbolic plane.
By utilizing the GAP~\cite{GAP4} code provided in~\cite{Vogeler2003},
one can confirm that the group defined by the presentation
\[\langle a, b \mid a^{3} = b^{7} = (ab)^{2} = (a^{-1} b^3 a^{-1} b a^{-1} b^3)^2 = 1\rangle\]
has 504 elements; see also Appendix~\ref{sec:appB}.

From this, we have $\X\cong \Gamma'\backslash\disk$, where $\Gamma'$ is the normal subgroup of $\Delp$ generated by the element $(a^{-1} b^3 a^{-1} b a^{-1} b^3)^2$.

In the following subsections, we will consider a Fuchsian group $\Gamma$ generated by a set of 42 generators.
These particular generators play the role of side-pairing transformations for its fundamental domain, which turns out to be a hyperbolic polygon with 42 sides.

\subsection{Definition of $\Gamma$ as a subgroup of a Coxeter group}
Consider a Coxeter group given by the following presentation
\[
\Del = \langle s_1,s_2,s_3 | s_1^2 = s_2^2 = s_3^2 = (s_1s_2)^7=(s_2s_3)^3=(s_1s_3)^2 \rangle.
\]
Then $\Delp$ becomes the index two subgroup of $\Del$, consisting of elements of even length.

The following relations can be readily verified within $\Del$:
\begin{equation}
\label{eq:coxrelation}
\begin{alignedat}{2}
& s_3s_1 && = s_1s_3, \\
& s_3s_2s_3 && = s_2s_3s_2, \\
& s_3s_2s_1s_3 && = s_2s_3s_2s_1, \\
& s_3s_2s_1s_2s_3s_2 && = s_2s_3s_2s_1s_2s_3, \\
& s_2s_1s_2s_1s_2s_1s_2 && = s_1s_2s_1s_2s_1s_2s_1, \\
& s_3s_2s_1s_2s_1s_3s_2s_1s_2s_1s_2s_1 && = s_2s_3s_2s_1s_2s_1s_3s_2s_1s_2s_1s_2.
\end{alignedat}
\end{equation}
A way to verify these relations is by using the geometric representation~\cite[5.3]{MR1066460}.
It is a faithful representation $\rho : \Del \to \GL(\R^3)$ given by
\begin{align*}
\rho(s_1) =
\begin{pmatrix}
-1 & 2\phi & 0 \\
0 & 1 & 0 \\
0 & 0 & 1 \\
\end{pmatrix}, \quad
\rho(s_2) = 
\begin{pmatrix}
1 & 0 & 0 \\
2\phi & -1 & 1 \\
0 & 0 & 1 \\
\end{pmatrix}, \quad
\rho(s_3) = 
\begin{pmatrix}
1 & 0 & 0 \\
0 & 1 & 0 \\
0 & 1 & -1 \\
\end{pmatrix},
\end{align*}
where $\phi = \cos(\frac{\pi}{7})$.

Using the relations from \eqref{eq:coxrelation}, we can carry out word reductions in $\Del$.
This means we replace every occurrence of a term on the left-hand side term with its right-hand side equivalent in any word formulated with the group's generators.
For an implementation, see the Mathematica code snippet in Appendix~\ref{sec:appA}.
We believe that these are complete rewriting rules to obtain a reduced word in a lexicographically minimal form, though we do not intend to seek its proof here.
These relations are sufficient to verify an equality within $\Del$, necessary for the arguments presented in this paper.
See \cite{MR1261121} for a proof of the existence of finite rewriting rules to solve the word problem for some Coxeter groups.

The elements $\gamma_{i}\in \Del$, for $i=1,\dots,42$, are defined as below.
Specifically, the first six elements are given as:
\begin{align*}
\gamma_1& =s_2s_3s_2s_1s_2s_1s_3s_2s_1s_2s_1s_3s_2s_1s_2s_1s_2s_3s_2s_1s_2s_1s_2s_1s_3s_2s_1s_2s_1s_3s_2s_1s_2s_1s_3s_2s_1s_2,\\
\gamma_2& =s_1s_2s_3s_2s_1s_2s_1s_2s_1s_3s_2s_1s_2s_1s_3s_2s_1s_2s_1s_2s_3s_2s_1s_2s_1s_2s_1s_3s_2s_1s_2s_1s_3s_2s_1s_2,\\
\gamma_3& =(s_1s_2)^3(s_3s_2s_1s_2s_1)^3s_2s_3s_2s_1s_2s_1s_2s_1s_3s_2s_1s_2s_1s_2s_3s_2s_1s_2s_1s_3s_2s_1s_2,\\
\gamma_4& =(s_1s_2)^2s_3s_2s_1s_2s_1s_2s_3s_2s_1s_2s_1s_2s_1s_3s_2s_1s_2s_1s_3s_2s_1s_2s_1s_2s_3s_2s_1s_2s_1s_2s_3s_2(s_1s_2)^2,\\
\gamma_5& =s_1s_3s_2s_1s_2s_1s_3s_2s_1s_2s_1s_2s_3s_2s_1s_2s_1s_2s_1s_3s_2s_1s_2s_1s_3s_2s_1s_2s_1s_2s_3s_2s_1s_2s_1s_2,\\
\gamma_6& =s_2s_3s_2s_1s_2s_1s_2s_3s_2s_1s_2s_1s_2s_1s_3s_2s_1s_2s_1s_3s_2s_1s_2s_1s_2s_3s_2s_1s_2s_1s_2s_1s_3s_2s_1s_2s_1s_2.
\end{align*}
For $7\leq i\leq 42$, the elements are recursively defined by the formula:
\begin{equation}
\gamma_{i} : = s_2s_1 \gamma_{i-6} s_1s_2.
\label{eq:gamma_recursive}
\end{equation}
By setting $\gamma_{i} := \gamma_{i \mod {42}}$ for $i\in \Z$, 
we can further check that \eqref{eq:gamma_recursive} holds for all $i\in \Z$.

The inverses of the elements $\gamma_{i}$ are given by the following relations:
\begin{align}
\gamma_{1} \gamma_{33} &= 1, & \gamma_{2} \gamma_{23} &= 1, & \gamma_{3} \gamma_{13} &= 1, & \gamma_{4} \gamma_{18} &= 1, & \gamma_{5} \gamma_{26} &= 1, & \gamma_{6} \gamma_{34} &= 1, \notag \\
\gamma_{7} \gamma_{39} &= 1, & \gamma_{8} \gamma_{29} &= 1, & \gamma_{9} \gamma_{19} &= 1, & \gamma_{10} \gamma_{24} &= 1, & \gamma_{11} \gamma_{32} &= 1, & \gamma_{12} \gamma_{40} &= 1, \notag \\
\gamma_{14} \gamma_{35} &= 1, & \gamma_{15} \gamma_{25} &= 1, & \gamma_{16} \gamma_{30} &= 1, & \gamma_{17} \gamma_{38} &= 1, & \gamma_{20} \gamma_{41} &= 1, & \gamma_{21} \gamma_{31} &= 1, \notag \\
\gamma_{22} \gamma_{36} &= 1, & \gamma_{27} \gamma_{37} &= 1, & \gamma_{28} \gamma_{42} &= 1. \label{eq:sidepairings}
\end{align}

From now on, we denote by $\Gamma$ the subgroup of $\Del$ generated by the elements $\gamma_i$ for $i$ from 1 to 42.
In the next subsection, we will see that they serve as side-pairing transformations for a certain hyperbolic polygon.

\begin{prop}
Let $\Gamma$ be a subgroup of $\Del$ generated by $\gamma_i$ for $i=1,\cdots, 42$. 
Then $\Gamma$ is a normal subgroup of $\Del$ and $\Delp$.
\end{prop}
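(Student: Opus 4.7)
The approach is to verify that $\Gamma$ is preserved under conjugation by each of the three Coxeter generators $s_1,s_2,s_3$ of $\Del$, which is clearly sufficient for normality in $\Del$. Normality in $\Delp$ will then follow automatically, since $\Gamma\subset\Delp$: each of $\gamma_1,\ldots,\gamma_6$ is spelled out as a word of even length in the $s_k$'s, and the recursion $\gamma_i=s_2s_1\gamma_{i-6}s_1s_2$ preserves this parity.

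The recursion \eqref{eq:gamma_recursive}, together with the cyclic convention $\gamma_i:=\gamma_{i\bmod 42}$, immediately shows that conjugation by $s_2s_1$ acts on the generating set as the cyclic shift $\gamma_i\mapsto\gamma_{i+6\bmod 42}$ and therefore preserves $\Gamma$. Since $s_2=s_1\cdot(s_1s_2)$, conjugation by $s_2$ is the composition of conjugation by $s_1s_2$ followed by conjugation by $s_1$, so once $s_1$ is handled, $s_2$ is automatic. The problem thus reduces to checking that $s_1\gamma_is_1\in\Gamma$ and $s_3\gamma_is_3\in\Gamma$ for each $i\in\{1,\ldots,42\}$.

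For each such check, the plan is to compute the word $s_k\gamma_is_k$ and reduce it to its lexicographically minimal form using the rewriting rules of \eqref{eq:coxrelation}, as implemented in the Mathematica routine of Appendix~\ref{sec:appA}. In parallel, the same routine is applied to candidate elements of $\Gamma$---either a single $\gamma_j$ or a short product $\gamma_{j_1}\cdots\gamma_{j_r}$---and the two reduced forms are compared. Since the rewriting system produces a unique canonical normal form, two words represent the same element of $\Del$ exactly when their normal forms coincide, making the verification mechanical. Tabulating the outcomes of this mechanical check yields explicit permutations (or short product expressions) describing the actions of $s_1$ and $s_3$ on $\{\gamma_i\}$ modulo $\Gamma$.

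The main obstacle is computational length rather than conceptual difficulty: each $\gamma_i$ is a word of length $38$, so each $s_k\gamma_is_k$ has length $40$, and one must process on the order of a hundred such reductions, while additionally reducing candidate products of generators long enough to match. The only logical gap that the argument relies on, and which the paper explicitly acknowledges, is the \emph{completeness} of the rewriting rules \eqref{eq:coxrelation} as a decision procedure for word equality in $\Del$; this is not proved here but is supported by the general theory of~\cite{MR1261121}, and is the step a skeptical reader would scrutinize most carefully.
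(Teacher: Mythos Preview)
Your approach matches the paper's: verify $s_k\gamma_j s_k\in\Gamma$ for each Coxeter generator $s_k$ and each $\gamma_j$ by explicit word reduction using the relations \eqref{eq:coxrelation}. Your observation that the recursion \eqref{eq:gamma_recursive} makes the $s_2$-check redundant once $s_1$ is handled is a pleasant shortcut the paper does not take; the paper instead halves the work in a different direction by invoking the inverse relations \eqref{eq:sidepairings} to restrict to $21$ indices, and then tabulates all $3\times 21$ identities $s_k\gamma_j s_k\cdot(\text{product of }\gamma_i\text{'s})=1$ explicitly.

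One correction: the ``logical gap'' you flag concerning completeness of the rewriting system is not actually a gap here. To certify an \emph{equality} in $\Del$ it suffices to reduce both sides to a common word (equivalently, to reduce the product of one side with the inverse of the other to the empty word); since every rewriting step applies a valid relation of $\Del$, this proves the equality regardless of whether the system is confluent. Completeness would be needed only to certify \emph{in}equality from distinct normal forms. The paper's proof lists each required identity and checks it in this one-directional way, so no appeal to completeness or to \cite{MR1261121} is required. (If one is still nervous, the faithful geometric representation $\rho:\Del\to\GL_3(\R)$ recorded just after \eqref{eq:coxrelation} gives an independent algebraic check of any individual identity.)
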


\begin{proof}
To establish the normality of \(\Gamma\), it suffices to confirm that for each \(i \in \{1,2,3\}\) and every \(j \in \{1,\dots,42\}\), the term \(s_i\gamma_{j} s_i\) is expressible as a product of the elements \(\gamma_k\).
An examination of \eqref{eq:sidepairings} narrows our focus to the indices:
\[
j \in \{1, 2, \dots, 11, 12, 14, 15, 16, 17, 20, 21, 22, 27, 28\}.
\]

The following elements of $\Del$ equate to the identity, which can be verified by \eqref{eq:coxrelation}:
\begin{center}
\begin{tabular}{ccc}
$s_1\gamma_{1}s_1\gamma_{25}$,  &$s_2\gamma_{1}s_2\gamma_{31}$,  &$s_3\gamma_{1}s_3\gamma_{11}\gamma_{40}$, \\
$s_1\gamma_{2}s_1\gamma_{35}$,  &$s_2\gamma_{2}s_2\gamma_{41}$,  &$s_3\gamma_{2}s_3\gamma_{21}$, \\
$s_1\gamma_{3}s_1\gamma_{3}$,  &$s_2\gamma_{3}s_2\gamma_{9}$,  &$s_3\gamma_{3}s_3\gamma_{21}\gamma_{36}\gamma_{27}$, \\
$s_1\gamma_{4}s_1\gamma_{40}$,  &$s_2\gamma_{4}s_2\gamma_{4}$,  &$s_3\gamma_{4}s_3\gamma_{16}\gamma_{25}$, \\
\end{tabular}
\end{center}
\begin{center}
\begin{tabular}{ccc}
$s_1\gamma_{5}s_1\gamma_{32}$,  &$s_2\gamma_{5}s_2\gamma_{38}$,  &$s_3\gamma_{5}s_3\gamma_{16}$, \\
$s_1\gamma_{6}s_1\gamma_{24}$,  &$s_2\gamma_{6}s_2\gamma_{30}$,  &$s_3\gamma_{6}s_3\gamma_{8}\gamma_{39}$, \\
$s_1\gamma_{7}s_1\gamma_{19}$,  &$s_2\gamma_{7}s_2\gamma_{25}$,  &$s_3\gamma_{7}s_3\gamma_{8}\gamma_{34}$, \\
$s_1\gamma_{8}s_1\gamma_{29}$,  &$s_2\gamma_{8}s_2\gamma_{35}$,  &$s_3\gamma_{8}s_3\gamma_{8}$, \\
\end{tabular}
\end{center}

\begin{center}
\begin{tabular}{ccc}
$s_1\gamma_{9}s_1\gamma_{39}$,  &$s_2\gamma_{9}s_2\gamma_{3}$,  &$s_3\gamma_{9}s_3\gamma_{8}\gamma_{24}$, \\
$s_1\gamma_{10}s_1\gamma_{34}$,  &$s_2\gamma_{10}s_2\gamma_{40}$,  &$s_3\gamma_{10}s_3\gamma_{8}\gamma_{19}$, \\
$s_1\gamma_{11}s_1\gamma_{26}$,  &$s_2\gamma_{11}s_2\gamma_{32}$,  &$s_3\gamma_{11}s_3\gamma_{42}$, \\
$s_1\gamma_{12}s_1\gamma_{18}$,  &$s_2\gamma_{12}s_2\gamma_{24}$,  &$s_3\gamma_{12}s_3\gamma_{42}\gamma_{33}$,\\
\end{tabular}
\end{center}
\begin{center}
\begin{tabular}{ccc}
$s_1\gamma_{14}s_1\gamma_{23}$,  &$s_2\gamma_{14}s_2\gamma_{29}$,  &$s_3\gamma_{14}s_3\gamma_{37}$, \\
$s_1\gamma_{15}s_1\gamma_{33}$,  &$s_2\gamma_{15}s_2\gamma_{39}$,  &$s_3\gamma_{15}s_3\gamma_{5}\gamma_{18}$, \\
$s_1\gamma_{16}s_1\gamma_{28}$,  &$s_2\gamma_{16}s_2\gamma_{34}$,  &$s_3\gamma_{16}s_3\gamma_{5}$, \\
$s_1\gamma_{17}s_1\gamma_{20}$,  &$s_2\gamma_{17}s_2\gamma_{26}$,  &$s_3\gamma_{17}s_3\gamma_{5}\gamma_{34}$, \\
\end{tabular}
\end{center}
\begin{center}
\begin{tabular}{ccc}
$s_1\gamma_{20}s_1\gamma_{17}$,  &$s_2\gamma_{20}s_2\gamma_{23}$,  &$s_3\gamma_{20}s_3\gamma_{2}\gamma_{33}$, \\
$s_1\gamma_{21}s_1\gamma_{27}$,  &$s_2\gamma_{21}s_2\gamma_{33}$,  &$s_3\gamma_{21}s_3\gamma_{2}$, \\
$s_1\gamma_{22}s_1\gamma_{22}$,  &$s_2\gamma_{22}s_2\gamma_{28}$,  &$s_3\gamma_{22}s_3\gamma_{36}$, \\
$s_1\gamma_{27}s_1\gamma_{21}$,  &$s_2\gamma_{27}s_2\gamma_{27}$,  &$s_3\gamma_{27}s_3\gamma_{35}$, \\
$s_1\gamma_{28}s_1\gamma_{16}$,  &$s_2\gamma_{28}s_2\gamma_{22}$,  &$s_3\gamma_{28}s_3\gamma_{32}$.
\end{tabular}
\end{center}

Therefore, $\Gamma$ is a normal subgroup of $\Del$, and consequently, of $\Delp$ as well.
\end{proof}

Later, we will establish that the index $[\Delp:\Gamma]$ equals 504 by evaluating the area of a fundamental domain for $\Gamma$; see \refprop{prop:gammaindex}.

To regard $\Gamma$ as a Fuchsian group, we consider its concrete action on $\disk$.
The group $\Del$ acts on $\disk$, where the generators $s_1, s_2, s_3$ act as reflections along the sides of a hyperbolic triangle.
We fix a triangle with vertices $P_0, P_1, P_2$ using the coordinates:
$P_0 = (0,0)$, $P_1 = r_0(\cos(\frac{5\pi}{14}), \sin(\frac{5\pi}{14}))$, and $P_2 = (0, y_0)$.
Here, $r_0$ is the unique real root of the polynomial $1-6x^2-97x^4-244x^6-97x^8-6x^{10}+x^{12}$ such that $0<r_0<1$.
Likewise, $y_0$ is the unique real root of the polynomial $1-6x^2-48x^4-83x^6-48x^8-6x^{10}+x^{12}$ such that $0<y_0<1$. 
The triangle is illustrated in Figure~\ref{fig:237_tri}.
The generators $s_1$ and $s_2$ act as Euclidean reflections along the line segments connecting the pairs of vertices $P_0$, $P_1$, and $P_0$, $P_2$, respectively.
In a similar manner, $s_3$ acts as a hyperbolic reflection across the geodesic line segment that connects the vertices $P_1$ and $P_2$.
The angles of this hyperbolic triangle are $\pi/7$ at $P_0$, $\pi/2$ at $P_1$, and $\pi/3$ at $P_2$.

These values can be found from a relationship between the lengths of the sides and the angles of a hyperbolic triangle.
For a hyperbolic triangle with vertices $A$, $B$, and $C$, the hyperbolic law of cosine is given by the equation
\begin{equation}
\cosh(a) = \cosh(b) \cosh(c) - \sinh(b) \sinh(c) \cos(\alpha).
\label{eq:hyperbolic_cosine}
\end{equation}
Here, \(a\), \(b\), and \(c\) represent the lengths of the sides of the triangle opposite to vertices $A$, $B$, and $C$ respectively, 
whiel \(\alpha\) denotes the angle at vertex $A$.

\begin{figure}[h]
    \centering
    \includegraphics[width=0.5\textwidth]{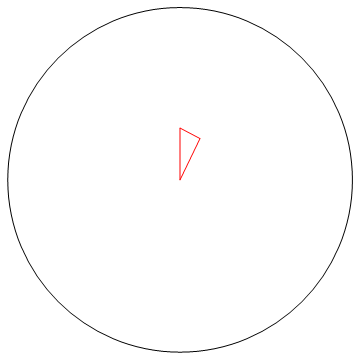}
    \caption{Fundamental hyperbolic triangle for $\Del$.}
    \label{fig:237_tri}
\end{figure}

Henceforth, with the given action, we consider both $\Delp$ and $\Gamma$ as subgroups of $\PSU(1,1)$, the group of orientation-preserving isometries of $\disk$.
As elements of $\PSU(1,1)$, we have the following generators $a$ and $b$ of $\Delp$:
\begin{align}
a &  = s_3s_2 = 
\pm 
\pmat{\alpha}{\beta}{\conj{\beta}}{\conj{\alpha}}
, \label{eq:matrixA} \\
b &  = s_2s_1 =
\pm 
\pmat
{\cos \left(\frac{\pi }{7}\right)+\i \sin \left(\frac{\pi }{7}\right)} {0}
{0} {\cos \left(\frac{\pi }{7}\right)-\i \sin \left(\frac{\pi }{7}\right)}
, \label{eq:matrixB}
\end{align}
where $\alpha$ is a root of 
$1 - 7 x + 21 x^2 - 35 x^3 + 35 x^4 - 21 x^5 + 7 x^6$ such that
$\alpha \approx 0.5+1.03826\i$, and 
$\beta$ is a root of $-1 + 7 x^4 + 7 x^6$ such that $\beta \approx 0.5727$.
Note that we have $a^3 = b^7 = (ab)^2 = 1$.

\subsection{Fundamental domain for $\Gamma$}
It is worth noting that the motivation behind adopting the above action of $\Del$ is that it provides the classical tessellation of a fundamental domain, given as a regular 14-gon inside $\disk$, for the Klein quartic. 
This is illustrated in Figure~\ref{fig:237_klein}.
\begin{figure}[h]
    \centering
    \includegraphics[width=0.5\textwidth]{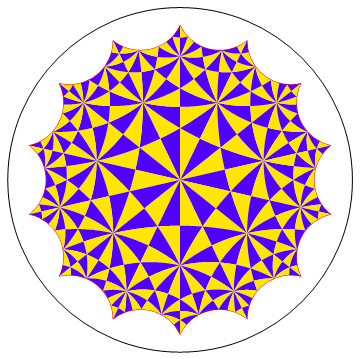}
    \caption{Tessellation of a fundamental domain for the Klein quartic}
    \label{fig:237_klein}
\end{figure}
The main goal of this subsection is to determine an explicit fundamental domain for $\Gamma$.
To achieve this, we will employ Poincar\'{e}'s polyhedron theorem as presented by Maskit~\cite{maskit1971}.
Recall that a closed set $D$ is called a \textit{fundamental domain} for a Fuchsian group $G$ if:
\begin{enumerate}
    \item No two distinct points of the interior of $D$ are equivalent under $G$, and
    \item Every point in $\disk$ is equivalent under $G$ to some point of $D$.
\end{enumerate}

For an element \(g = \pm \pmat{a}{b}{c}{d} \in \PSU(1,1)\) with \(c \neq 0\), 
the \textit{isometric circle} \(I(g)\) of \(g\) is defined by:
\begin{equation}
I(g) = \{z \in \mathbb{C} : |cz + d| = 1\}.
\label{eq:isometric_circle}
\end{equation}
As a Euclidean circle, its center is given by \(-d / c\) and it has a radius of \(1 / |c|\).
On $I(g)$, $g$ acts as a Euclidean isometry.
For $z\in \disk$, we have
\begin{equation}
z\in I(g) \iff \distD(z, 0) = \distD(z, g^{-1} 0).
\label{eq:isometric_dist}
\end{equation}
Furthermore, this defines a geodesic in $\disk$.
This can be seen by considering the triangle with vertices at the center of $\disk$, the center of $I(g)$, and the intersection point of $I(g)$ with the unit circle.
It turns out to be a right triangle.
For further details on isometric circles, refer to~\cite[Sections 4.1 and 7.36]{MR0698777}.

The \textit{exterior} of \(I(g)\), denoted by $\ext(I(g))$, is the subset of \(\disk\) given by:
\[
\ext(I(g)) = \{z \in \disk : |cz + d| > 1\}.
\]

\begin{lemma}
Two isometric circles $I(\gamma_i)$ and $I(\gamma_j)$ with $i\neq j$ have a non-empty intersection within the unit disk $\disk$ 
if and only if $j = i\pm 1 \mod 42$,
and each intersection consists of a single point.
\end{lemma}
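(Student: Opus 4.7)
The plan is to combine the $7$-fold rotational symmetry of the family $\{\gamma_i\}$ with a direct, finite computation of the isometric circles. The recursion $\gamma_{i+6} = b\,\gamma_i\,b^{-1}$, combined with (\ref{eq:matrixB}) showing that $b$ acts on $\disk$ as rotation by $2\pi/7$ about the origin, implies that $I(\gamma_{i+6}) = b\cdot I(\gamma_i)$. The incidence pattern is therefore invariant under $(i,j) \mapsto (i+6,j+6)\pmod{42}$, and it suffices to verify the claim for $i\in\{1,\ldots,6\}$ and all $j\in\{1,\ldots,42\}\setminus\{i\}$. Moreover, since each $I(\gamma_i)$ is a hyperbolic geodesic (as noted just below (\ref{eq:isometric_dist})) and two distinct geodesics in $\disk$ meet in at most one point, the ``single point'' assertion is automatic once non-emptiness has been established: the content of the lemma is to decide for which pairs the intersection is non-empty inside $\disk$.

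I would first compute $\gamma_1,\ldots,\gamma_6$ explicitly as elements of $\PSU(1,1)$ by substituting the matrix expressions for $s_1,s_2,s_3$ into the words listed before (\ref{eq:gamma_recursive}); each word has even length, so $\gamma_i\in\Delp$ and one may work directly in terms of the generators $a$, $b$ from (\ref{eq:matrixA})--(\ref{eq:matrixB}). From each resulting matrix one reads off the Euclidean center and radius of the corresponding isometric circle via (\ref{eq:isometric_circle}), and the remaining thirty-six circles are obtained by rotating these six by integer multiples of $2\pi/7$. For every relevant pair $(i,j)$ one then applies the standard Euclidean criterion $|r_i-r_j|\leq|C_i-C_j|\leq r_i+r_j$; whenever it succeeds, one checks in addition whether the intersection point lies inside the open disk $\disk$. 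I expect the outcome to be that pairs with $j\equiv i\pm 1\pmod{42}$ yield exactly one intersection point inside $\disk$ --- namely the corresponding vertex of the $42$-gon --- while every other pair either has disjoint Euclidean circles or has its intersection outside $\disk$.

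The main obstacle is one of rigour rather than conceptual depth: the entries of the $\gamma_i$ lie in the number field generated by $\phi=\cos(\pi/7)$ together with the algebraic numbers $\alpha,\beta$ of (\ref{eq:matrixA}), and so the inequalities comparing $|C_i-C_j|$ with $r_i\pm r_j$, as well as the test $|z|<1$ at the candidate intersection points, must be carried out by exact symbolic computation in this field in order to upgrade numerical evidence to a proof. Once this is set up in a computer algebra system --- which is essentially the role of the code accompanying the paper --- the case check, though lengthy, is entirely routine.
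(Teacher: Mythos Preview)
Your proposal is correct and follows essentially the same approach as the paper: compute exact matrix expressions for the $\gamma_i$, read off the isometric circles via (\ref{eq:isometric_circle}), determine their pairwise intersections by exact algebraic computation, and note that two distinct geodesics in $\disk$ can meet in at most one point. The paper's own proof is in fact terser than yours, simply stating that the computation is ``straightforward and exact'' once the matrices are in hand; your use of the $7$-fold symmetry $I(\gamma_{i+6})=b\cdot I(\gamma_i)$ to reduce the case check is a natural optimization that the paper does not invoke explicitly in this lemma (though it appears immediately afterwards as Proposition~\ref{prop:Fsym}).
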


\begin{proof}
By utilizing \eqref{eq:matrixA} and \eqref{eq:matrixB}, we derive an exact matrix expression for each $\gamma_i$.
Applying \eqref{eq:isometric_circle}, we can algebraically determine the intersection of two distinct isometric circles, $I(\gamma_i)$ and $I(\gamma_j)$, within the unit disk $\disk$.
Hence, this computation is straightforward and exact.

Whenever there is a non-empty intersection of $I(\gamma_i)$ and $I(\gamma_j)$ within the unit disk, it must consist of just one point.
If two such isometric circles intersect at multiple points, then they must be identical as they are also geodesics in $\disk$.
\end{proof}

A visual representation of the isometric circles, $I(\gamma_i)$, can be found in Figure~\ref{fig:iso_circles}.
\begin{figure}[h]
    \centering
    \includegraphics[width=0.9\textwidth]{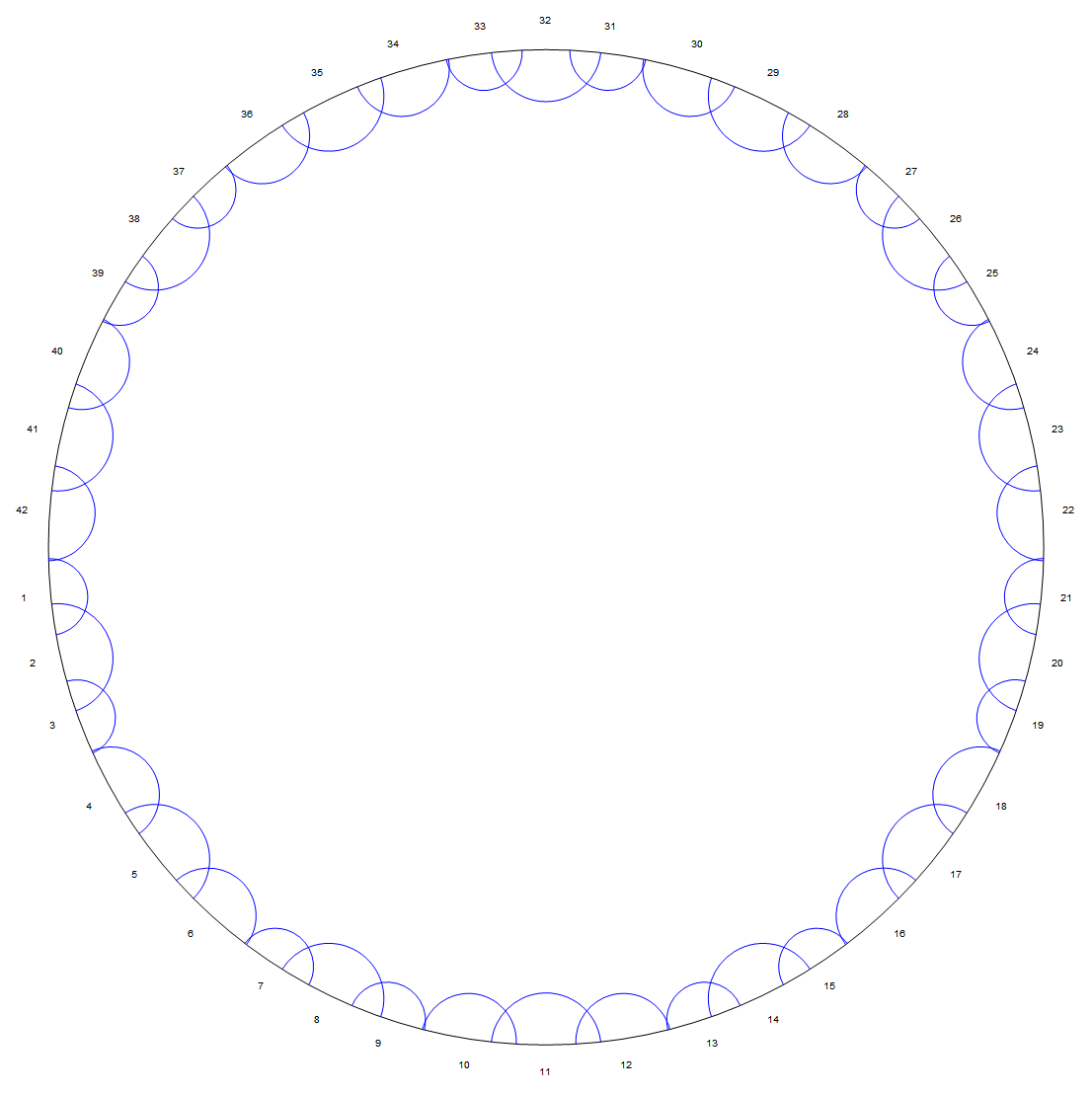}
    \caption{Isometric circles of $\gamma_i$}
    \label{fig:iso_circles}
\end{figure}
For each $i=1,\dots, 42$, let $v_i\in \disk$ be the unique point of the intersection $I(\gamma_i)\cap I(\gamma_{i-1})$, where we set $I(\gamma_{0}) = I(\gamma_{42})$ as before.

We define a compact subset $\F$ of the unit disk $\disk$ as the region bounded by the isometric circles $I(\gamma_i)$ for $1 \leq i \leq 42$:
\begin{equation}
\F = \text{closure of }\bigcap_{1\leq i\leq 42} \ext(I(\gamma_i)).
\label{eq:Fdef}
\end{equation}
Then $\F$ is a hyperbolic polygon with 42 sides with vertices $v_i, \, 1 \leq i \leq 42$.
We denote the geodesic segment connecting $v_i$ and $v_{i+1}$ by $s_i$, hence a subset of $I(\gamma_i)$, and call it a \textit{side} of $\F$.
For a visual representation of $\F$, \( s_i \) and \( v_i \), refer to Figure~\ref{fig:fricke_domain}.

\begin{figure}[h]
    \centering
    \includegraphics[width=0.9\textwidth]{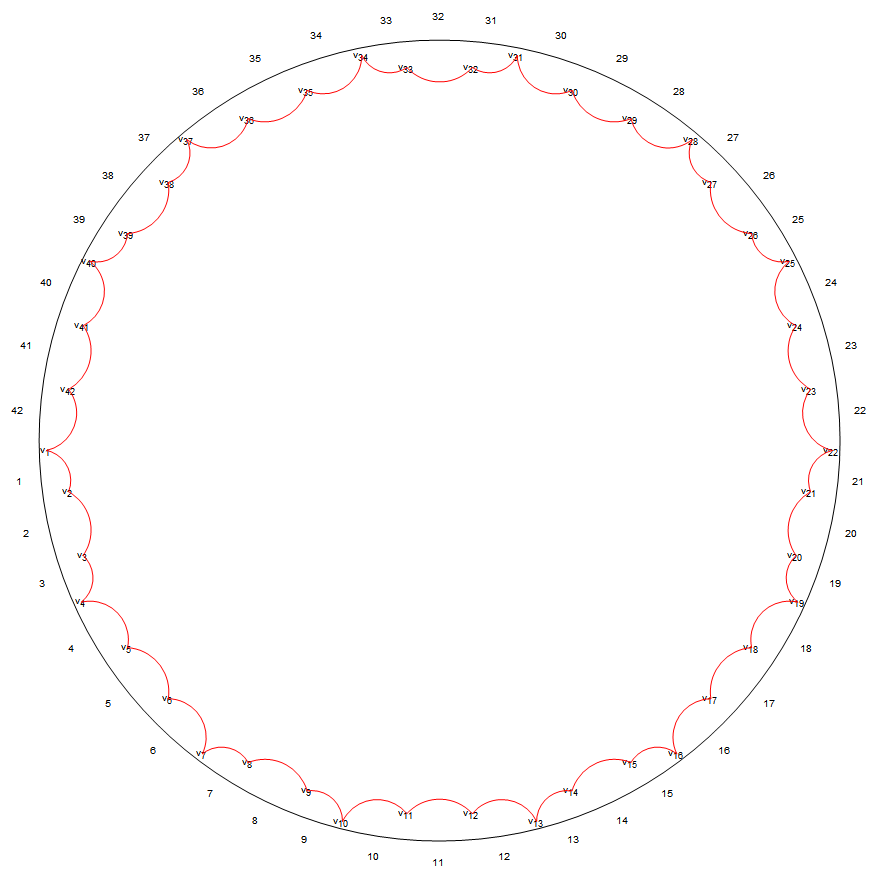}
    \caption{Hyperbolic polygon $\F$ and its vertices $\{v_i\}$}
    \label{fig:fricke_domain}
\end{figure}

We now consider a symmetry property of $\F$.
It is evident that $b$, as defined in \eqref{eq:matrixB}, acts as a rotation around the origin by an angle of $2\pi/7$.
We can express \eqref{eq:gamma_recursive} as $\gamma_{i} = b\gamma_{i-6}b^{-1}, \, i\in \Z$.
Using the invariance property of isometric circles,
\[
I(hgh^{-1}) = h(I(g)),\, g,h \in \PSU(1,1),
\]
we can establish the following:
\begin{prop} \label{prop:Fsym}
The polygon $\F$ is invariant under the action of the cyclic group of rotations generated by $b$.
\end{prop}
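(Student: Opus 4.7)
The plan is to leverage the recursion $\gamma_{i} = b\gamma_{i-6}b^{-1}$ recorded just before the proposition together with the naturality rule $I(hgh^{-1}) = h(I(g))$ of isometric circles under conjugation, and then to upgrade the resulting permutation of circles to a permutation of their exteriors.

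First I would apply the naturality rule with $h = b$ and $g = \gamma_{i-6}$ to obtain $I(\gamma_{i}) = b\bigl(I(\gamma_{i-6})\bigr)$ for every $i\in\Z$. Since $i\mapsto i-6 \pmod{42}$ is a bijection of $\{1,\dots,42\}$, this shows that $b$ permutes the collection of geodesics $\{I(\gamma_i) : 1\le i\le 42\}$ in $\disk$.

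Next I would promote this to a statement about exteriors. From \eqref{eq:matrixB}, $b$ acts as a rotation about the origin, so $b(0)=0$; and from \eqref{eq:isometric_dist}, the isometric circle $I(g)$ is the perpendicular bisector in $\disk$ of the geodesic segment from $0$ to $g^{-1}0$. Checking the inequality at $z=0$ — where $|d|>1$ because $g\in\PSU(1,1)$ with $c\neq 0$ forces $|a|=|d|>1$ — shows that $\ext(I(g))$ is the half-plane $\{z\in\disk : \distD(z,0) < \distD(z,g^{-1}0)\}$. Since $b$ is an isometry fixing $0$, it sends this half-plane onto $\{z : \distD(z,0) < \distD(z, b\gamma_{i-6}^{-1}(0))\}$, and the identity $b\gamma_{i-6}^{-1} = \gamma_i^{-1}b$ combined with $b(0)=0$ gives $b\gamma_{i-6}^{-1}(0) = \gamma_i^{-1}(0)$. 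Hence $b\bigl(\ext(I(\gamma_{i-6}))\bigr) = \ext(I(\gamma_i))$.

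Finally, intersecting over $i$, taking closures as in \eqref{eq:Fdef}, and using that $b$ is a self-homeomorphism of $\disk$ yields $b(\F) = \F$, from which invariance under the full cyclic group $\langle b\rangle$ of order seven follows by iteration. I do not anticipate a real obstacle here; the only point requiring any attention is fixing the direction of the inequality defining $\ext(I(g))$, which is immediate from evaluating at $z=0$.
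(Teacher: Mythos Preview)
Your proposal is correct and follows essentially the same route as the paper: the paper's argument consists precisely of noting that $b$ is a rotation about the origin, rewriting \eqref{eq:gamma_recursive} as $\gamma_i = b\gamma_{i-6}b^{-1}$, and invoking the naturality rule $I(hgh^{-1}) = h(I(g))$ to conclude. Your extra paragraph promoting the permutation of isometric circles to a permutation of their exteriors (via $b(0)=0$ and the distance characterization \eqref{eq:isometric_dist}) makes explicit a step the paper leaves tacit, but the overall strategy is identical.
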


From \refprop{prop:Fsym}, we have
\[
v_{j} = e^{2\pi \i /7}v_{j-6}.
\]
Table~\ref{tab:vcoor} provides the explicit coordinates of the points $v_j = x_j+  y_j\i$ for $1\leq j \leq 6$.

\begin{table}[ht]
\centering
\begin{tabular}{|c|c|p{10cm}|}
\hline
Coordinate & Numerical value & Minimal polynomial \\
\hline
$x_1$ & $-0.9809$ & $-7 - 336 x^2 - 3584 x^4 + 4096 x^6$ \\
\hline
$y_1$ & $-0.0246$ & $-1 + 1648 x^2 + 5632 x^4 + 4096 x^6$ \\
\hline
$x_2$ & $-0.9263$ & $1 + 335 x^2 + 19055 x^4 + 96809 x^6 - 8896 x^8 - 376832 x^{10} + 262144 x^{12}$ \\
\hline
$y_2$ & $-0.1277$ & $49 - 5047 x^2 + 113631 x^4 + 688415 x^6 + 1331008 x^8 + 1032192 x^{10} + 262144 x^{12}$ \\
\hline
$x_3$ & $-0.8900$ & $28561 + 148889 x^2 + 119344 x^4 - 382383 x^6 - 445248 x^8 + 253952 x^{10} + 262144 x^{12}$ \\
\hline
$y_3$ & $-0.2869$ & $49 - 441 x^2 - 5096 x^4 + 20447 x^6 + 192192 x^8 + 401408 x^{10} + 262144 x^{12}$ \\
\hline
$x_4$ & $-0.8944$ & $-1183 - 2576 x^2 + 1792 x^4 + 4096 x^6$ \\
\hline
$y_4$ & $-0.4034$ & $-1 - 144 x^2 + 256 x^4 + 4096 x^6$ \\
\hline
$x_5$ & $-0.7792$ & $28561 + 116948 x^2 - 80583 x^4 - 471535 x^6 - 110144 x^8 + 454656 x^{10} + 262144 x^{12}$ \\
\hline
$y_5$ & $-0.5170$ & $49 + 980 x^2 + 3185 x^4 - 26593 x^6 - 78400 x^8 + 200704 x^{10} + 262144 x^{12}$ \\
\hline
$x_6$ & $-0.6774$ & $28561 + 51883 x^2 - 195334 x^4 - 389551 x^6 + 226304 x^8 + 684032 x^{10} + 262144 x^{12}$ \\
\hline
$y_6$ & $-0.6446$ & $49 + 1421 x^2 + 7938 x^4 - 4641 x^6 - 89600 x^8 - 28672 x^{10} + 262144 x^{12}$ \\
\hline
\end{tabular}
\caption{Coordinates for $v_j = x_j+  y_j \i,\, 1 \leq j \leq 6$}
\label{tab:vcoor}
\end{table}

For every side $s$ of $\F$, there exists a corresponding side $s'$ and an isometry $A(s,s')\in \Gamma$
such that $A(s,s')$ maps side $s$ onto side $s'$, $(s')' = s$ and $A(s', s) = (A(s, s'))^{-1}$.
These pairs of sides $(s,s')$ can be easily determined using \eqref{eq:sidepairings}.
For example, $A(s_1,s_{33}) = \gamma_1$ and $A(s_{33},s_1)=\gamma_{33}$.
Such a collection $\Phi=\{A(s,s')\}$ of isometries is called a \textit{side-paring} of $\F$.

Now we introduce the notion of \textit{cycle} of vertices following~\cite{maskit1971}.
Let $z_1$ be some vertex, say, $v_i$ of $\F$.
There are two sides of $\F$, namely, $s_{i-1}$ and $s_i$ which meet at $z_1$.
We choose one of these, denoting it as $t_1$.
There is a corresponding side $t_1'$ and an isometry $A_1 = A(t_1,t_1')$.
The next vertex, $z_2$, is defined as the image of $z_1$ under the action of $A_1$, i.e., $z_2 = A_1(z_1)$.
There is a unique other side $t_2$, i.e., $t_2\neq t_1'$ which has $z_2$ as one of its endpoint. 
There is a corresponding side $t_2'$ and a generator $A_2 = A(t_2, t_2')$.
We can continue this process iteratively: $z_3 = A_2(z_2)$ and let $s_3$ be the unique other side which has $z_3$ as an endpoint, and so on.
This procedure generates sequences: $\{z_n\}$ for vertices, $\{(t_n, t_n')\}$ for pairs of sides, and $\{A_n\}$ for $\Phi$.
We define the \textit{period} as the smallest positive integer $m$ for which all three sequences are periodic with period $m$.
Then the ordered list of vertices $(z_1, \ldots, z_m)$ is called a cycle of vertices.

We can find cycles starting from a given vertex by following the definition.
For example, consider a cycle that starts with the vertex $z_1: =  v_{2}$.
This vertex is the intersection point of sides $s_1$ and $s_2$.
By setting $t_1: = s_1$, we find $t_1' = s_{33}$ and $A_1 = A(s_1, s_{33})=\gamma_1$.
With these choices, we can determine the subsequent terms in the following way:
\begin{itemize}
    \item \( z_2 = A_1(z_1) = v_{33} \), and \( A_2 = A(s_{32}, s_{11}) = \gamma_{32} \).
    \item \( z_3 = A_2(z_2) = v_{11} \), and \( A_3 = A(s_{10}, s_{24}) = \gamma_{10} \).
    \item \( z_4 = A_3(z_3) = v_{24} \), and \( A_4 = A(s_{23}, s_{2}) = \gamma_{23} \).
    \item \( z_5 = A_4(z_4) = v_{2} \), and \( A_5 = A(s_{1}, s_{33}) = \gamma_{1} \)
\end{itemize}
After this point, the cycle becomes periodic.
In this specific example, we have a cycle with a period of 4.

There exist cycles of both period 4 and period 14 within the vertices of $\F$.
These vertices can be grouped into 8 distinct sets, with each set containing members that belong to a specific cycle.
Here is the list of all cycles, where each pair of adjacent vertices in a cycle is connected an arrow, and each arrow is labeled with the corresponding side-pairing transformation.
We will begin with the cycles having a period of 4, listed as follows:
\begin{alignat}{6}
v_{2}
& \xrightarrow{\gamma_{1}} v_{33}
& \xrightarrow{\gamma_{32}} v_{11}
& \xrightarrow{\gamma_{10}} v_{24}
& \xrightarrow{\gamma_{23}} v_{2},\qquad  &&
v_{2}
& \xleftarrow{\gamma_{33}} v_{33}
& \xleftarrow{\gamma_{11}} v_{11}
& \xleftarrow{\gamma_{24}} v_{24}
& \xleftarrow{\gamma_{2}} v_{2}, \notag \\
v_{3}
& \xrightarrow{\gamma_{3}} v_{14}
& \xrightarrow{\gamma_{14}} v_{36}
& \xrightarrow{\gamma_{36}} v_{23}
& \xrightarrow{\gamma_{23}} v_{3},\qquad &&
v_{3}
& \xleftarrow{\gamma_{13}} v_{14}
& \xleftarrow{\gamma_{35}} v_{36}
& \xleftarrow{\gamma_{22}} v_{23}
& \xleftarrow{\gamma_{2}} v_{3},\notag \\
v_{5}
& \xrightarrow{\gamma_{5}} v_{27}
& \xrightarrow{\gamma_{27}} v_{38}
& \xrightarrow{\gamma_{38}} v_{18}
& \xrightarrow{\gamma_{18}} v_{5},\qquad &&
v_{5}
& \xleftarrow{\gamma_{26}} v_{27}
& \xleftarrow{\gamma_{37}} v_{38}
& \xleftarrow{\gamma_{17}} v_{18}
& \xleftarrow{\gamma_{4}} v_{5}, \notag \\
v_{6}
& \xrightarrow{\gamma_{6}} v_{35}
& \xrightarrow{\gamma_{35}} v_{15}
& \xrightarrow{\gamma_{15}} v_{26}
& \xrightarrow{\gamma_{26}} v_{6},\qquad &&
v_{6}
& \xleftarrow{\gamma_{34}} v_{35}
& \xleftarrow{\gamma_{14}} v_{15}
& \xleftarrow{\gamma_{25}} v_{26}
& \xleftarrow{\gamma_{5}} v_{6}, \notag \\
v_{8}
& \xrightarrow{\gamma_{8}} v_{30}
& \xrightarrow{\gamma_{30}} v_{17}
& \xrightarrow{\gamma_{17}} v_{39}
& \xrightarrow{\gamma_{39}} v_{8},\qquad &&
v_{8}
& \xleftarrow{\gamma_{29}} v_{30}
& \xleftarrow{\gamma_{16}} v_{17}
& \xleftarrow{\gamma_{38}} v_{39}
& \xleftarrow{\gamma_{7}} v_{8}, \notag \\
v_{9}
& \xrightarrow{\gamma_{8}} v_{29}
& \xrightarrow{\gamma_{28}} v_{42}
& \xrightarrow{\gamma_{41}} v_{20}
& \xrightarrow{\gamma_{19}} v_{9},\qquad &&
v_{9}
& \xleftarrow{\gamma_{29}} v_{29}
& \xleftarrow{\gamma_{42}} v_{42}
& \xleftarrow{\gamma_{20}} v_{20}
& \xleftarrow{\gamma_{9}} v_{9}, \notag \\
v_{12}
& \xrightarrow{\gamma_{11}} v_{32}
& \xrightarrow{\gamma_{31}} v_{21}
& \xrightarrow{\gamma_{20}} v_{41}
& \xrightarrow{\gamma_{40}} v_{12},\qquad &&
v_{12}
& \xleftarrow{\gamma_{32}} v_{32}
& \xleftarrow{\gamma_{21}} v_{21}
& \xleftarrow{\gamma_{41}} v_{41}
& \xleftarrow{\gamma_{12}} v_{12}.
\label{eq:vertex_cycle1}
\end{alignat}

Here is the list of cycles with a period of 14:
\begin{alignat}{2}
v_{1} & \xrightarrow{\gamma_{42}} v_{28} \xrightarrow{\gamma_{27}} v_{37} \xrightarrow{\gamma_{36}} v_{22} \xrightarrow{\gamma_{21}} v_{31} \xrightarrow{\gamma_{30}} v_{16} \xrightarrow{\gamma_{15}} v_{25} \xrightarrow{\gamma_{24}} v_{10} \notag \\
&
\xrightarrow{\gamma_{9}} v_{19}
\xrightarrow{\gamma_{18}} v_{4}
\xrightarrow{\gamma_{3}} v_{13}
\xrightarrow{\gamma_{12}} v_{40} \xrightarrow{\gamma_{39}} v_{7}
\xrightarrow{\gamma_{6}} v_{34}
\xrightarrow{\gamma_{33}} v_{1}, \notag \\
v_{1} &
\xleftarrow{\gamma_{28}} v_{28} 
\xleftarrow{\gamma_{37}} v_{37}
\xleftarrow{\gamma_{22}} v_{22} 
\xleftarrow{\gamma_{31}} v_{31}
\xleftarrow{\gamma_{16}} v_{16}
\xleftarrow{\gamma_{25}} v_{25}
\xleftarrow{\gamma_{10}} v_{10} \notag\\
&
\xleftarrow{\gamma_{19}} v_{19}
\xleftarrow{\gamma_{4}} v_{4} 
\xleftarrow{\gamma_{13}} v_{13}
\xleftarrow{\gamma_{40}} v_{40}
\xleftarrow{\gamma_{7}} v_{7}
\xleftarrow{\gamma_{34}} v_{34}
\xleftarrow{\gamma_{1}} v_{1}.
\label{eq:vertex_cycle2}
\end{alignat}

For each vertex $v_i$, suppose that two sides $s_{i-1}$ and $s_{i}$ make an angle $\alpha(v_i)$ at $v_i$ measured from inside the polygon.
Using the algebraic coordinates for $v_i$ provided in Table~\ref{tab:vcoor} and 
applying formula \eqref{eq:hyperbolic_cosine}, we can compute $\cos(\alpha(v_i))$.
It turns out that $\cos(\alpha(v_i))$ is an algebraic number.
Due to the symmetry of $\F$, as explained in \refprop{prop:Fsym}, we only need to calculate $\alpha(v_i)$ for $1\leq i \leq 6$. The precise values for these angles are presented in Table~\ref{tab:vertex_angle}.

\begin{table}[h]
\centering
\begin{tabular}{|c|c|c|}
\hline
$i$ & Minimal polynomial of $\cos(\alpha(v_i)))$ & Numerical value of $\alpha(v_i)/\pi$ \\
\hline
1 & $1-4x-4x^2+8x^3$ & 0.142857 \\
\hline
2 & $-1+56x^2-336x^4+448x^6$ & 0.545452 \\
\hline
3 & $-1+56x^2-336x^4+448x^6$ & 0.545452 \\
\hline
4 & $1-4x-4x^2+8x^3$ & 0.142857 \\
\hline
5 & $-1+56x^2-336x^4+448x^6$ & 0.454548 \\
\hline
6 & $-1+56x^2-336x^4+448x^6$ & 0.454548 \\
\hline
\end{tabular}
\caption{Angles at vertices of $\F$}
\label{tab:vertex_angle}
\end{table}

\begin{lemma}\label{lemma:cycle_angle_sum}
For every cycle \( (z_1,\dots, z_m) \) of vertices of \( \F \), the sum of angles is \( 2\pi \):
\[
\sum_{i=1}^{m}\alpha(z_i) = 2\pi.
\]
\end{lemma}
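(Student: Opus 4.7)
My plan is to reduce the claim to a finite check on the eight cycles listed in \eqref{eq:vertex_cycle1} and \eqref{eq:vertex_cycle2}, using the rotational symmetry of $\F$ to evaluate all the relevant angles from Table~\ref{tab:vertex_angle}. Because $\gamma_i = b\gamma_{i-6}b^{-1}$ and $I(hgh^{-1}) = h(I(g))$, the map $b$ sends $I(\gamma_{i-1})\cap I(\gamma_i)$ onto $I(\gamma_{i+5})\cap I(\gamma_{i+6})$, so $v_{i+6} = b(v_i)$ for every $i\in \Z$. Since $b$ is an orientation-preserving isometry of $\disk$ fixing the origin, it preserves hyperbolic angles, which gives the key identity
\[
\alpha(v_i) = \alpha(v_{i+6}), \qquad i\in\Z.
\]
Thus $\alpha(v_i)$ depends only on the residue $i\bmod 6$, and Table~\ref{tab:vertex_angle} records all six values.

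Next, I would go through the eight cycles one at a time and compute the residues modulo $6$ of the vertex indices. For a representative period-$4$ cycle such as $(v_2, v_{33}, v_{11}, v_{24})$, the indices reduce to $\{2,3,5,6\}\bmod 6$, and the corresponding angles from Table~\ref{tab:vertex_angle} sum to
\[
\alpha(v_2)+\alpha(v_3)+\alpha(v_5)+\alpha(v_6) = (0.545452+0.545452+0.454548+0.454548)\pi = 2\pi.
\]
The same pattern, organized as two pairs of the form $\{2,3\}$ and $\{5,6\}$ modulo $6$, appears for each of the seven period-$4$ cycles in \eqref{eq:vertex_cycle1}, so each contributes exactly $2\pi$. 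For the single period-$14$ cycle in \eqref{eq:vertex_cycle2}, every index is $\equiv 1$ or $\equiv 4 \pmod 6$, and both residues give angle $\pi/7$; the sum is $14\cdot(\pi/7)=2\pi$.

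To make the verification exact rather than numerical, I would express $\cos\alpha(v_i)$ via the hyperbolic law of cosines \eqref{eq:hyperbolic_cosine} applied to the hyperbolic triangle with vertices $0$, $v_{i-1}$, $v_i$ (using the auxiliary triangles $0, v_i, v_{i+1}$), obtaining algebraic numbers in the field generated by the coordinates of Table~\ref{tab:vcoor}. The minimal polynomials displayed in Table~\ref{tab:vertex_angle} then identify the exact values $\alpha(v_1)=\alpha(v_4)=\pi/7$, $\alpha(v_2)+\alpha(v_5)=\alpha(v_3)+\alpha(v_6)=\pi$, from which the two cases above become exact equalities.

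The main obstacle, such as it is, is really bookkeeping: ensuring that the six-fold symmetry I invoke is genuinely the symmetry that produced Table~\ref{tab:vertex_angle}, and that the cycles in \eqref{eq:vertex_cycle1}--\eqref{eq:vertex_cycle2} indeed form the complete list of cycles (so that no vertex is missed). Both issues are handled by the symmetry argument above together with the observation that the 42 vertices partition into $7+1=8$ cycles of total length $7\cdot 4 + 14 = 42$, matching the number of vertices.
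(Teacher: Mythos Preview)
Your proposal is correct and follows essentially the same approach as the paper: reduce the angles modulo the sixfold rotational symmetry $\alpha(v_i)=\alpha(v_{i+6})$, observe that every period-$4$ cycle has residue set $\{2,3,5,6\}$ while the period-$14$ cycle has residues $\{1,4\}$ only, and then evaluate the two resulting angle sums from Table~\ref{tab:vertex_angle}. Your additional remarks justifying the $\bmod\ 6$ invariance via $b$, the completeness of the cycle list via the count $7\cdot 4+14=42$, and the exact identities $\alpha(v_1)=\alpha(v_4)=\pi/7$, $\alpha(v_2)+\alpha(v_5)=\alpha(v_3)+\alpha(v_6)=\pi$ simply make explicit what the paper leaves implicit.
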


\begin{proof}
Note that if $i\equiv j \mod 6$, then $\alpha(v_i) = \alpha(v_{j})$.
Let us establish the proof by considering different cases.

For a cycle $(z_1,z_2,z_3,z_4)$ of period 4 as given in \eqref{eq:vertex_cycle1}, 
we have:
\[
\sum_{i=1}^{4}\alpha(z_i) = 
\alpha(v_2)+\alpha(v_3)+\alpha(v_5)+\alpha(v_6)
=2(\alpha(v_2)+\alpha(v_5))
=2\pi.
\]
And for a cycle $(z_1,\dots,z_{14})$ of period 14 in \eqref{eq:vertex_cycle2}, 
we have:
\[
\sum_{i=1}^{14}\alpha(z_i) = 
7(\alpha(v_1)+\alpha(v_4))
=2\pi.
\]    
\end{proof}

\begin{thm} \label{thm:fund}
The polygon $\F$ is a fundamental domain for $\Gamma$.
\end{thm}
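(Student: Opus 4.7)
The plan is to invoke Poincaré's polyhedron theorem in Maskit's formulation~\cite{maskit1971}. The construction in \eqref{eq:Fdef} presents $\F$ as the intersection of the half-planes $\ext(I(\gamma_i))$, which are bounded by geodesics of $\disk$; hence $\F$ is a convex hyperbolic polygon. The preceding lemma on pairwise intersections of isometric circles, illustrated in Figure~\ref{fig:iso_circles}, ensures that the only vertices are the $v_i$ and that adjacent sides $s_{i-1}$ and $s_i$ meet exactly at $v_i$. The inverse relations \eqref{eq:sidepairings}, combined with the standard fact that each $\gamma_i$ acts as a Euclidean isometry on $I(\gamma_i)$ carrying it to $I(\gamma_i^{-1})$, show that the $\gamma_i$ furnish a side-pairing $\Phi = \{A(s,s')\}$ of $\F$.

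Next I would verify the remaining hypotheses of Poincaré's theorem on each of the eight vertex cycles of $\F$, enumerated in \eqref{eq:vertex_cycle1} and \eqref{eq:vertex_cycle2}. Two conditions must be checked cycle-by-cycle: (a) the interior angles around each cycle sum to $2\pi$, and (b) the product of side-pairings taken along the cycle equals the identity in $\Del$. Condition (a) is precisely \reflemma{lemma:cycle_angle_sum}. For condition (b), one forms the explicit cycle product --- for example $\gamma_{23}\gamma_{10}\gamma_{32}\gamma_{1}$ for the first 4-cycle of \eqref{eq:vertex_cycle1}, and the analogous length-14 word $\gamma_{33}\gamma_{6}\gamma_{39}\gamma_{12}\gamma_{3}\gamma_{18}\gamma_{9}\gamma_{24}\gamma_{15}\gamma_{30}\gamma_{21}\gamma_{36}\gamma_{27}\gamma_{42}$ for \eqref{eq:vertex_cycle2} --- and reduces it to the empty word using the rewriting rules \eqref{eq:coxrelation}. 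With (a) and (b) in place for every cycle, Maskit's form of Poincaré's polyhedron theorem concludes that the group generated by $\Phi$ is discrete, torsion-free, and acts on $\disk$ with $\F$ as a fundamental domain. Since the group generated by $\Phi$ is exactly $\Gamma$ by its definition, the theorem follows.

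The main obstacle is condition (b). Each $\gamma_i$ is already a long word in $s_1,s_2,s_3$, so the cycle products are substantially longer words whose reduction to the identity must be carried out by the automated rewriting procedure described in Appendix~\ref{sec:appA}. However, this is exactly the same flavor of computation as in the normality verification of the preceding proposition, and it is mechanical once the rewriting rules are accepted; it simply has to be executed for all eight cycles. A minor additional point to mention explicitly is that $\F$ is compact (it is contained in $\disk$ and bounded away from $\partial \disk$ by the algebraic coordinates in Table~\ref{tab:vcoor}), so one avoids the cusp conditions in Maskit's theorem and only the finite-vertex cycle conditions need to be verified.
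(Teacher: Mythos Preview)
Your approach is the same as the paper's --- invoke Poincar\'e's theorem in Maskit's formulation and check the cycle conditions --- but you have over-specified the hypotheses. In Maskit's version~\cite{maskit1971}, the only cycle condition one must verify is your (a): that each angle sum equals $2\pi/\nu$ for some positive integer $\nu$. Condition (b), that the cycle product equals the identity, is not a hypothesis of the theorem; it is part of the \emph{conclusion} (the theorem yields a presentation of the generated group in which the $\nu$-th powers of the cycle products are the defining relations). The paper's proof is accordingly one line: Lemma~\ref{lemma:cycle_angle_sum} gives $\nu=1$ for every cycle, and Maskit's theorem applies directly. Your ``main obstacle'' --- the long word reductions for the eight cycle products --- therefore evaporates, and no appeal to the rewriting procedure of Appendix~\ref{sec:appA} is needed here.
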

\begin{proof}
Let us employ Poincar\'{e}'s theorem as presented in~\cite{maskit1971}:
if for every cycle $(z_1,\dots, z_m)$, there is an integer $\nu$ for which
\[
\sum_{i=1}^{m}\alpha(z_i) = \frac{2\pi}{\nu},
\]
then $\F$ is a fundamental domain for the group generated by side-pairing $\Phi$, which is nothing but $\Gamma$.

This reduces our task to calculating the angle sum for every cycle.
The theorem is thus deduced from Lemma ~\ref{lemma:cycle_angle_sum}.
\end{proof}

\begin{prop}\label{prop:gammaindex}
The index $[\Delp:\Gamma]$ is 504.
\end{prop}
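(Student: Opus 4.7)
The plan is to compute the index by comparing hyperbolic areas. Since $\Gamma$ is normal in $\Delp$ of finite index and both act without torsion (indeed $\Gamma \subset \Delp$ and $\F$ is a fundamental domain for $\Gamma$ by Theorem~\ref{thm:fund}), the index coincides with the ratio $\area(\F)/\area(\F_0)$ where $\F_0$ is any fundamental domain for $\Delp$. I would carry this out in three short steps.

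First, compute $\area(\F_0)$. Since $\Delp$ has index $2$ in the Coxeter group $\Del$, a fundamental domain for $\Delp$ can be taken to be the union of the $(\pi/7, \pi/2, \pi/3)$-triangle of Figure~\ref{fig:237_tri} and one of its reflections across a side. By Gauss--Bonnet for a hyperbolic triangle,
\[
\area(\F_0) = 2\left(\pi - \tfrac{\pi}{7} - \tfrac{\pi}{2} - \tfrac{\pi}{3}\right) = \tfrac{\pi}{21}.
\]

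Second, compute $\area(\F)$ from Gauss--Bonnet for the hyperbolic 42-gon:
\[
\area(\F) = (42-2)\pi - \sum_{i=1}^{42} \alpha(v_i) = 40\pi - \sum_{i=1}^{42}\alpha(v_i).
\]
To evaluate the angle sum, I would not add up the individual values from Table~\ref{tab:vertex_angle} one by one; instead, I would partition the 42 vertices into the cycles listed in \eqref{eq:vertex_cycle1} and \eqref{eq:vertex_cycle2}. One checks that these give exactly $7$ cycles of period $4$ together with $1$ cycle of period $14$, accounting for all $42$ vertices. By Lemma~\ref{lemma:cycle_angle_sum}, every cycle contributes an angle sum of $2\pi$, so
\[
\sum_{i=1}^{42}\alpha(v_i) = 8\cdot 2\pi = 16\pi, \qquad \area(\F) = 40\pi - 16\pi = 24\pi.
\]

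Third, take the ratio:
\[
[\Delp : \Gamma] = \frac{\area(\F)}{\area(\F_0)} = \frac{24\pi}{\pi/21} = 504.
\]
There is no real obstacle here: the nontrivial combinatorial input, namely the complete list of vertex cycles and the angle data, has already been assembled in the preceding subsection, and the only thing left is a one-line Gauss--Bonnet computation combined with the triangle group area. The only point that deserves a brief mention in the write-up is why counting cycles is equivalent to counting $\Gamma$-orbits of vertices of $\F$, which is exactly the content of the cycle construction from~\cite{maskit1971} used in proving Theorem~\ref{thm:fund}.
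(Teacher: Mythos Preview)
Your proof is correct and follows the same area-ratio strategy as the paper. The only difference is cosmetic: the paper computes $\sum_{i=1}^{42}\alpha(v_i)=16\pi$ directly from Table~\ref{tab:vertex_angle} using the $7$-fold rotational symmetry of $\F$ (Proposition~\ref{prop:Fsym}), writing the total as $7(2+2/7)\pi$, whereas you obtain the same sum by grouping the vertices into the eight cycles and invoking Lemma~\ref{lemma:cycle_angle_sum}. One small slip worth fixing in the write-up: $\Delp$ is the $(2,3,7)$-triangle group and certainly has torsion, so the clause ``both act without torsion'' is false---but this is harmless, since the index of a finite-index subgroup of any Fuchsian group equals the ratio of fundamental-domain areas regardless of torsion.
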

\begin{proof}
From Table~\ref{tab:vertex_angle}, we have
\[
\sum_{i=1}^{42}\alpha(z_i) = 7(2+2/7)\pi = 16\pi.
\]
This implies that the area of $\F$ is $24\pi$.
Therefore, $[\Delp:\Gamma]=504$, given that the fundamental domain of $\Delp$ has area $\pi/21$, which is twice the area of the fundamental triangle for $\Del$  illustrated in Figure~\ref{fig:237_tri}.
\end{proof}

\begin{cor}
The group $\Gamma$ is the normal subgroup of $\Delp$ generated by $\gamma_{15}$.
\end{cor}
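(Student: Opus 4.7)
The plan is to establish $\Gamma = N$, where $N := \langle\langle \gamma_{15}\rangle\rangle_{\Delp}$ denotes the normal closure of $\gamma_{15}$ in $\Delp$. The inclusion $N \subseteq \Gamma$ is immediate from the preceding proposition: $\Gamma$ is normal in $\Delp$ and contains $\gamma_{15}$, so it contains the normal closure.

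For the reverse inclusion, since Proposition~\ref{prop:gammaindex} gives $[\Delp:\Gamma] = 504$, it suffices to show $[\Delp:N] \leq 504$. The quotient $\Delp/N$ admits the presentation
\[
\langle a, b \mid a^3 = b^7 = (ab)^2 = w = 1\rangle,
\]
where $w$ is $\gamma_{15}$ rewritten as a word in $a = s_3 s_2$ and $b = s_2 s_1$. By the recursive relation $\gamma_{15} = b^2 \gamma_3 b^{-2}$ obtained from \eqref{eq:gamma_recursive}, $w$ generates the same normal closure as the $a,b$-expression of $\gamma_3$; the rewriting itself is mechanical, obtained by pairing consecutive letters via $s_1 s_2 = b^{-1}$, $s_2 s_1 = b$, $s_3 s_2 = a$, $s_2 s_3 = a^{-1}$, and $s_1 s_3 = s_3 s_1 = ab$ (using $(ab)^2 = 1$). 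A GAP computation analogous to the one cited in Section~\ref{subsec:hurwitzsurfaces} then confirms that this presentation defines a group of order $504$, yielding $[\Delp:N] = 504$ and hence $N = \Gamma$.

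An alternative route that avoids invoking GAP is to exhibit the extra relator $(a^{-1} b^3 a^{-1} b a^{-1} b^3)^2$ from Section~\ref{subsec:hurwitzsurfaces} explicitly as a product of $\Delp$-conjugates of $w^{\pm 1}$, using the Coxeter relations \eqref{eq:coxrelation}. This would give $\Gamma' \subseteq N$, which combined with the index equalities $[\Delp:\Gamma'] = [\Delp:\Gamma] = 504$ forces $N = \Gamma' = \Gamma$. The main obstacle in either approach is the word-level bookkeeping: translating the 44-letter word $\gamma_3$ into the $a, b$ alphabet for the GAP verification, or constructing the explicit product of conjugates realizing the extra relator, both of which are routine but sufficiently long that computer algebra is essentially unavoidable.
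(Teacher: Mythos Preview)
Your argument is correct, and the overall strategy---show $N\subseteq\Gamma$ by normality, then force equality via $[\Delp:N]=504=[\Delp:\Gamma]$---matches the paper's. But you have missed a shortcut that makes the index computation for $N$ unnecessary. The paper does not rewrite $\gamma_{15}$ in the $a,b$ alphabet and invoke a fresh GAP run; instead it goes in the opposite direction: it takes the relator $(a^{-1}b^3a^{-1}ba^{-1}b^3)^2$ already known from Section~\ref{subsec:hurwitzsurfaces}, expands it in the $s_i$ alphabet via $a=s_3s_2$, $b=s_2s_1$, and reduces the resulting word using the Coxeter relations~\eqref{eq:coxrelation} to obtain literally $\gamma_{15}$. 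Thus $N$ is not merely isomorphic to $\Gamma'$, it \emph{is} $\Gamma'$, since the two normal closures have the same generator. The index $[\Delp:\Gamma']=504$ is then borrowed from the earlier GAP verification, and the inclusion $\Gamma'\subseteq\Gamma$ plus Proposition~\ref{prop:gammaindex} finish the proof. Your two proposed routes (new GAP run on the rewritten presentation, or expressing the old relator as a product of conjugates of $w$) both work, but they duplicate effort: the single word reduction showing $(a^{-1}b^3a^{-1}ba^{-1}b^3)^2=\gamma_{15}$ replaces all of it.
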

\begin{proof}
Let $\Gamma'$ be the normal subgroup of $\Delp$ with index 504, which was considered in Subsection~\ref{subsec:hurwitzsurfaces}.
It is the normal subgroup of $\Delp$ generated by:
\begin{align}
\gamma & = (a^{-1}b^3a^{-1}ba^{-1}b^3)^2 \notag \\
& = (s_2s_3s_2s_1s_2s_1s_2s_1s_2s_3s_2s_1s_2s_3s_2s_1s_2s_1s_2s_1)^2 \notag \\
&= s_2s_3s_2s_1s_2s_1s_2s_1s_3s_2s_1s_2s_1s_3s_2s_1s_2s_1s_2s_3s_2s_1s_2s_1s_2s_1s_3s_2s_1s_2s_1s_3s_2s_1s_2s_1 \notag \\
&= \gamma_{15}. \label{eq:gamma15}
\end{align}
Thus, we have $\Gamma' \subseteq \Gamma$.
Considering both subgroups have the same index, we deduce \( \Gamma' = \Gamma \).
\end{proof}

Recall that for $w\in \disk$ with trivial stabilizer in $\Gamma$, the \textit{Dirichlet domain} centered at $w\in \disk$ is defined by
\[
D(w)=\{z \in \disk: \distD(z, w) \leq \distD(z, g w) \text { for all } g \in \Gamma\} .
\]
It is well-known that $D(w)$ is a fundamental domain for $\Gamma$; see, for example,~\cite[Section 9.4]{MR0698777}.
From \eqref{eq:isometric_dist}, the domain $D(0)$ can be described as the closure in $\disk$ of 
\[
\bigcap_{g \in \Gamma \backslash\{1\}} \ext(I(g)).
\]

\begin{thm}\label{thm:dirichlet}
The polygon $\F$ is the Dirichlet domain centered at $0$.    
\end{thm}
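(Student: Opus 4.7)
The plan is to establish $D(0) = \F$ via a double inclusion, where one direction is immediate from the definitions and the other is obtained by an area comparison.

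First, I would show $D(0) \subseteq \F$ directly. Since each $\gamma_i$, for $i = 1,\dots,42$, is a nontrivial element of $\Gamma$, the family $\{\gamma_1,\dots,\gamma_{42}\}$ is a subset of $\Gamma \setminus \{1\}$. Comparing the intersections defining the two domains, one obtains
\[
\bigcap_{g\in \Gamma\setminus\{1\}} \ext(I(g)) \subseteq \bigcap_{1 \leq i \leq 42} \ext(I(\gamma_i)),
\]
and taking closures yields $D(0) \subseteq \F$ by the very definitions of $D(0)$ and $\F$ recalled just before the theorem and in \eqref{eq:Fdef}.

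Second, I would invoke the fact that both $D(0)$ and $\F$ are fundamental domains for $\Gamma$: the former by the standard theory of Dirichlet domains, and the latter by Theorem~\ref{thm:fund}. Hence both have the same hyperbolic area, namely $\area(\Gamma\backslash \disk) = 24\pi$, as established in Proposition~\ref{prop:gammaindex}.

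Finally, to upgrade inclusion to equality, I would argue that $\F$ is the closure of its (nonempty) interior and $D(0)$ is likewise a closed hyperbolic polygon. If the inclusion $D(0) \subsetneq \F$ were strict, then $\F \setminus D(0)$ would contain a nonempty open subset of $\F$, and therefore have strictly positive hyperbolic measure, contradicting the equality of areas. Hence $D(0) = \F$. There is no serious obstacle here; once the characterization of $D(0)$ via isometric circles is granted and the area of $\F$ is known from Proposition~\ref{prop:gammaindex}, the conclusion is immediate.
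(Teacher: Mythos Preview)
Your proof is correct, and the first step (the inclusion $D(0)\subseteq\F$ via the isometric-circle description) is exactly what the paper does. For the reverse inclusion, however, the paper takes a slightly different route: instead of comparing areas, it argues pointwise. Assuming some interior point $z$ of $\F$ lies outside $D(0)$, one uses that $D(0)$ is a fundamental domain to find $\gamma\in\Gamma\setminus\{1\}$ with $\gamma z\in D(0)\subseteq\F$; but then $z$ and $\gamma z$ are two $\Gamma$-equivalent points of $\F$, which forces both to lie on $\partial\F$ by Theorem~\ref{thm:fund}, contradicting the choice of $z$. Your measure-theoretic version reaches the same conclusion by invoking the equality of areas of two fundamental domains and the fact that $\F=\overline{\mathrm{int}(\F)}$; it is a perfectly valid alternative and arguably more robust, at the cost of appealing to the area computation (or at least to the general fact that any two fundamental domains have equal area), whereas the paper's argument uses only the tiling property implicit in Theorem~\ref{thm:fund}. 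One small point worth making explicit in your write-up: when you say that $\F\setminus D(0)$ ``would contain a nonempty open subset of $\F$'' with positive measure, this uses that every nonempty relatively open subset of $\F$ meets $\mathrm{int}(\F)$, which is exactly where $\F=\overline{\mathrm{int}(\F)}$ enters.
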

\begin{proof}
Referring to \eqref{eq:Fdef}, one can clearly see that \(D(0)\) is a subset of \(\F\). 
It is sufficient to verify that every interior point of \(\F\) lies within \(D(0)\). 
Then, considering that both sets are closed, it implies that the two sets are the same.

Let $z\in \F\backslash D(0)$ be an interior point of $\F$.
Given that $D(0)$ is a fundamental domain for $\Gamma$, there exists a transformation $\gamma\in \Gamma$ such that $\gamma z\in D(0)\subseteq \F$.
Consequently, both \(z\) and \(\gamma z\) reside in \(\F\), which implies that they are boundary points of $\F$.
From this, we conclude that \(D(0)\) and \(\F\) are indeed identical.
\end{proof}

\subsection{Length spectrum of the Fricke-Macbeath surface}
In Vogeler's study~\cite{Vogeler2003}, an in-depth analysis of closed geodesics on Hurwitz surfaces is presented, with a special focus on the length spectrum of Hurwitz surfaces of low genera.
Of particular relevance to us, Vogeler offers an exhaustive table~\cite[Table C.2]{Vogeler2003} enumerating the short primitive closed geodesics on the Fricke-Macbeath surface, along with their corresponding lengths and multiplicities.
This table comprehensively covers geodesics with lengths below 14.49~\cite[p. 57]{Vogeler2003}.
For those interested on understanding the method used, refer to~\cite[Section 2.3]{Vogeler2003}.

In order to compute the multiplicity of \(\la_1\) for the Fricke-Macbeath surface using the Selberg trace formula, we need the conjugacy classes of the two shortest lengths as input data.
The shortest classes have an approximate length of \(5.796298891\), while
the next shortest classes are about \(8.303943294\) in length.

To employ interval arithmetic in the trace formula, we need a precise error bound for these lengths.
To convert an entry from~\cite[Table C.2]{Vogeler2003} into a conjugacy class of $\Gamma$, one can follow these steps:
\begin{itemize}
    \item A proper label for a geodesic is a sequence consisting of the letters $R$ and $L$. The sequence begins with either one or two $R$’s, followed by one or two $L$’s, and continues in this alternating pattern. Further details can be found in~\cite[p. 19]{Vogeler2003}.
    \item This proper label can be converted into a code. A code in the table is a sequence of 1’s and 2’s. Each number in this sequence denotes the count of consecutive $R$’s or $L$’s. As an illustration, the code $(1 1 2 2)$ translates to the proper label $RLRRLL$. This encoding is described in~\cite[p. 23]{Vogeler2003}.
    \item Now, one should substitute the letter $R$ with the expression $a^{-1}b$ and the letter $L$ with $b$. This transformation yields a word consisting of $a$ and $b$. The process is explained in~\cite[p. 32]{Vogeler2003}.    
    \item Lastly, with the aid of \eqref{eq:matrixA} and \eqref{eq:matrixB}, an exact matrix representation can be obtained for this word in $a$ and $b$.    
\end{itemize}

Consider the first entry with code $(1222)$ and $n=2$ in~\cite[Table C.2]{Vogeler2003}.
The code \((1222)\) which translates to the proper label $(RLLRRLL)$, and since \(n = 2\), this label repeats twice:
\[
(RLLRRLL)(RLLRRLL).
\]
The geodesic with this label corresponds to the conjugacy class of $\Gamma$ containing
\begin{align*}
\gamma
&= (a^{-1}b^3a^{-1}ba^{-1}b^3)(a^{-1}b^3a^{-1}ba^{-1}b^3) \\
&= \gamma_{15} \text{, as checked in } \eqref{eq:gamma15}.
\end{align*}
Let $z_0$ be half the trace of $\gamma$, whose minimal polynomial is $1-x-9 x^2+x^3$.
We can easily check that 
\[z_0 \in (9.09783467903, 9.09783467905).\]
The length of $\gamma$ is $\ell(\gamma) = 2 \arccosh(z_0) = 5.796298891\dots$.
This length is significant as it represents the shortest geodesic length on the Fricke-Macbeath surface $\X$.
This shortest geodesic length, also termed the \textit{systole} and denoted by \( \sys(\X) \), is defined as:
\[
\sys(\X) = \min \{ \ell(\gamma) : \{\gamma\} \in \calC(\Gamma) \}.
\]

Similarly, the geodesic labeled by the code $(11111112)$ with $n=2$ corresponds to a conjugacy class with length 
$2 \arccosh(z_1) = 8.303943294\dots$ with $z_1$ having minimal polynomial $-1-25 x-31 x^2+x^3$.
We can easily check that 
\[z_1 \in (31.78746324455, 31.78746324457).\]

Finally, note that the multiplicity of a geodesic listed in Vogeler's table should be doubled when we are counting the number of corresponding conjugacy classes.
This adjustment is necessary because the table enumerates unoriented geodesics; see \cite[Definition 9]{Vogeler2003}.

The preceding discussion can be summarized in the following proposition:
\begin{prop}\label{prop:shortgeo}
Let $z_0$ and $z_1$ be the above algebraic numbers.
The conjugacy classes with the minimum length in $\calC(\Gamma)$ has length $2\arccosh(z_0)$, and there are 252 classes. 
There are 504 conjugacy classes that are the second shortest, each with a length $2\arccosh(z_1)$.
\end{prop}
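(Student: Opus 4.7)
The plan is to leverage Vogeler's enumeration of short primitive closed geodesics on the Fricke--Macbeath surface directly, since Table C.2 of \cite{Vogeler2003} exhaustively lists primitive geodesics up to length $14.49$. My task is then reduced to three things: translating the two smallest-length entries into explicit elements of $\Gamma$, rigorously certifying the corresponding lengths as algebraic numbers, and converting Vogeler's multiplicity of unoriented primitive geodesics into a count of conjugacy classes.

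First I would apply the four-step recipe (code $\to$ proper label $\to$ word in $a, b$ $\to$ matrix) described in the excerpt to the two relevant table entries: code $(1222)$ with $n=2$ for the shortest geodesic and code $(11111112)$ with $n=2$ for the second shortest. The first produces the element $\gamma_{15}$, as already noted in \eqref{eq:gamma15}; the second produces another explicit word in $a$ and $b$. Using the exact matrices for $a$ and $b$ from \eqref{eq:matrixA}--\eqref{eq:matrixB} with algebraic entries, I would compute the trace of each element in the ring of algebraic numbers, finding that $z_0=|\Tr(\gamma)|/2$ satisfies $1-x-9x^2+x^3=0$ and the analogous $z_1$ satisfies $-1-25x-31x^2+x^3=0$. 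Since these are exact algebraic computations, interval arithmetic on the isolating intervals for the appropriate roots yields the stated numerical enclosures for $z_0$ and $z_1$, and hence the lengths $\ell = 2\arccosh(z_i)$.

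For the multiplicity count, I would invoke the bijection between oriented closed geodesics of $\X$ and conjugacy classes of $\Gamma\setminus\{\id\}$ recalled at the end of Subsection 2.3. Vogeler's table enumerates \emph{unoriented} primitive geodesics (per his Definition 9), so the number of conjugacy classes at each primitive length is twice Vogeler's tabulated multiplicity; the entries above carry multiplicities $126$ and $252$ respectively, yielding $252$ and $504$ conjugacy classes.

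The main (though modest) obstacle is to rule out contributions from \emph{non-primitive} conjugacy classes at these two lengths, since the proposition counts all conjugacy classes, not just the primitive ones. This is handled by observing that any non-primitive class $\gamma=\gamma_0^m$ with $m\geq 2$ has length at least $2\cdot\sys(\X) = 4\arccosh(z_0)\approx 11.59$, which exceeds the second length $2\arccosh(z_1)\approx 8.30$. Consequently, no non-primitive contributions appear below the threshold, and the enumeration is complete as stated.
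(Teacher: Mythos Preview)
Your proposal is correct and mirrors the paper's approach exactly: the paper does not give a separate proof but presents the proposition as a summary of the preceding discussion, which carries out the same code-to-element translation from Vogeler's table, derives the same minimal polynomials for $z_0$ and $z_1$, and doubles the unoriented multiplicities to obtain $252$ and $504$. Your explicit remark ruling out non-primitive classes (since $2\cdot\sys(\X)\approx 11.59$ exceeds both lengths in question) is a welcome clarification that the paper leaves implicit.
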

The precise intervals within which $z_0$ and $z_1$ lie, as well as the above multiplicities play a crucial role in the computations utilizing the Selberg trace formula with interval arithmetic.

\section{Multiplicity of the first eigenvalue}
\subsection{Overview of the proof}
In this section, we will investigate the multiplicity $m_1$ of the first eigenvalue $\la_1$ of the Laplacian on the Fricke-Macbeath surface $\X=\Gamma\backslash\disk$.
Specifically, we will prove the following:
\begin{theorem}\label{thm:mult7}
The multiplicity of the first eigenvalue $\la_1$ of the Fricke-Macbeath surface is 7, and $\la_1\in [1.23, 1.26]$.
\end{theorem}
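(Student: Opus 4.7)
The plan is to apply Selberg's trace formula \eqref{eq:STF1} to $\X = \Gamma \backslash \disk$ with several carefully chosen admissible test pairs $(h, g)$. Since $\area(\X) = 24\pi$ is known exactly and Proposition~\ref{prop:shortgeo} supplies certified algebraic information about the two shortest conjugacy-class lengths (with further entries of Vogeler's table available if needed), any $g$ whose support lies below the next length cut-off gives a trace formula identity whose geometric side is a fully explicit, interval-arithmetic-verifiable number. Manipulating the sign and support of $h$ on the critical strip then lets us extract linear inequalities pinning down the location and multiplicity of the first few eigenvalues.

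The lower bound $m_1 \geq 7$ is pure representation theory. The $\la_1$-eigenspace carries a complex representation of $\Aut(\X) \cong \PSL_2(8)$, and the irreducible complex representations of $\PSL_2(8)$ have dimensions $1, 7, 7, 7, 7, 8, 9, 9, 9$. The trivial representation cannot occur, since $\la_1 > 0$ forces eigenfunctions to be orthogonal to the constants; hence every irreducible constituent of the eigenspace has dimension in $\{7, 8, 9\}$, and so $m_1 \geq 7$ automatically.

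For the upper bound $m_1 \leq 7$ and for the enclosure $\la_1 \in [1.23, 1.26]$, I would combine three trace-formula inequalities, each driven by a different test function taken from the families in \cite[Section 4]{MR2338122}, \cite[Section 5]{MR4186122}, and the Cohn--Elkies-type constructions of \cite[Section 7]{MR1973059} used in \cite{FBP21}. The bound $\la_1 \geq 1.23$ is obtained with $h \geq 0$ arranged so that $h(r(\la)) \geq c > 0$ throughout $\la \in (0, 1.23)$, whereby a hypothetical eigenvalue in this window would make the spectral side strictly exceed the certified geometric side. The bound $\la_1 \leq 1.26$ follows either from a Rayleigh--Ritz estimate on a finite-element discretisation of the fundamental domain $\F$ from Section~3, or from a trace formula run with an indefinite test function engineered so that the hypothesis $\la_1 > 1.26$ produces a contradiction. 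Finally, choosing $h$ non-negative on the real line and peaked near $r(1.25)$, and dropping every spectral term except those for $\la_0 = 0$ and the $m_1$ copies of $\la_1$, gives an inequality of the form
\[
h(r_0) + m_1 \cdot \min_{\la \in [1.23, 1.26]} h(r(\la)) \leq 6 \int_{-\infty}^{\infty} r\, h(r) \tanh(\pi r)\, dr + \frac{1}{\sqrt{2\pi}} \sum_{\{\gamma\}} \frac{\La(\gamma)}{2 \sinh(\ell(\gamma)/2)}\, g(\ell(\gamma))
\]
whose right-hand side is rigorously evaluable using Proposition~\ref{prop:shortgeo}. Tuning $h$ so that this forces $m_1 < 8$ then yields $m_1 \leq 7$, and combined with the previous paragraph gives $m_1 = 7$.

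The main obstacle is numerical rather than conceptual: calibrating the test functions so that all three inequalities close simultaneously in verified interval arithmetic, propagating the algebraic bounds of Proposition~\ref{prop:shortgeo} through $g(\ell) / (2 \sinh(\ell/2))$, certifying the area integral, and arguing that conjugacy classes beyond the two shortest do not perturb the inequalities. The whole scheme depends on whether $g$ can be supported in a short enough interval that only the two shortest-class data suffice; if not, the next entries of Vogeler's table must be incorporated in the same rigorous manner, with their lengths and multiplicities certified from algebraic trace data analogously to $z_0$ and $z_1$.
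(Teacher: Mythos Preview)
Your overall strategy coincides with the paper's: representation theory for $m_1 \geq 7$, then a sequence of Selberg trace-formula inequalities with test functions drawn from exactly the families you name (the Booker--Str\"ombergsson type and the Cohn--Elkies/Hermite type) to exclude $\la_1 < 1.23$, force an eigenvalue into $[1.23,1.26]$, and bound the count there below $8$. The paper carries this out by splitting $(0,1.23)$ into three subintervals $[0,\tfrac14]$, $[\tfrac14,1]$, $[1,1.23]$, since no single admissible $h$ comfortably covers both the imaginary segment $r\in i(0,\tfrac12]$ and the real segment up to $r(1.23)$ while keeping the geometric side small enough; and in the last three steps it controls the tail of $\calG$ not by compact support of $g$ but by a \emph{sign condition} ($g(x)\le 0$, respectively $g(x)\ge 0$, for $x\ge\sys(\X)$), so that the two shortest classes supply a one-sided bound on $\calG$ rather than its exact value. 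Your closing worry about needing more of Vogeler's table is therefore resolved by sign engineering rather than support truncation.

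There is, however, a genuine gap in your argument for $m_1\ge 7$. You assert that the trivial representation cannot occur in $V_{\la_1}$ ``since $\la_1>0$ forces eigenfunctions to be orthogonal to the constants''. This is a non sequitur: the constants span one copy of the trivial representation inside $L^2(\X)$, but the full trivial-isotypic component of $L^2(\X)$ is $L^2$ of the quotient orbifold $\Delp\backslash\disk$, which is infinite-dimensional and contains plenty of non-constant functions. Orthogonality to constants in no way excludes a non-constant $\Aut(\X)$-invariant eigenfunction with eigenvalue $\la_1$. Excluding the trivial representation from the \emph{first} positive eigenspace is a separate, non-obvious fact; the paper does not prove it here but invokes \cite[Lemma~9.2]{FBP23}. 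Without that input your lower bound $m_1\ge 7$ is unsupported, and since every one of your subsequent trace-formula exclusions (and the paper's) converts ``$h(r(\la))\ge c$ on this window'' into ``the spectral side exceeds the geometric side by at least $7c$'' precisely via $m_1\ge 7$, the entire chain of inequalities would collapse.
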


To achieve this, we begin with the following key lemma:
\begin{lemma}\label{prop:mult7}
The multiplicity of the first eigenvalue $\la_1$ is at least 7.
\end{lemma}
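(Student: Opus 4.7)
The plan is to argue via the representation theory of $\text{Aut}(\X) \cong \PSL_2(8)$, combined with Selberg trace formula estimates for both $\X$ and its orbifold quotient. The eigenspace $V_{\la_1}$ is a complex representation of $\PSL_2(8)$. Since $\PSL_2(8)$ is simple of order $504$, the standard construction of its irreducible representations (trivial, Steinberg, principal series, discrete series) yields irreducible degrees $1, 7, 7, 7, 7, 8, 9, 9, 9$, with $1^2 + 4\cdot 7^2 + 8^2 + 3\cdot 9^2 = 504$. Consequently, any representation of $\PSL_2(8)$ whose decomposition excludes the trivial character has dimension at least $7$, so it suffices to show that the trivial character does not appear in $V_{\la_1}$.

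An $\text{Aut}(\X)$-invariant eigenfunction on $\X$ lifts to a $\Delp$-invariant function on $\uhp$, which descends to a Laplace eigenfunction on the $(2,3,7)$-orbifold $O = \Delp\backslash\uhp$ of area $\pi/21$; conversely every eigenfunction on $O$ pulls back to an $\text{Aut}(\X)$-invariant eigenfunction on $\X$ of the same eigenvalue. Thus the trivial character occurs in $V_{\la_1}$ precisely when $\la_1$ lies in the non-zero Laplace spectrum of $O$. I would rule this out in two steps: (i) apply the Selberg trace formula on $\X$, fed with the short-geodesic data from Proposition~\ref{prop:shortgeo} and one of the test functions discussed in the introduction, to obtain an explicit numerical upper bound $\la_1 \le c$ for some small $c$; and (ii) apply the Selberg trace formula for the orbifold $O$, whose geometric side now includes elliptic contributions from the three torsion classes of orders $2$, $3$, and $7$ in $\Delp$, to obtain a rigorous lower bound on the first non-zero eigenvalue $\mu_1(O)$. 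Because $O$ is very small, one expects $\mu_1(O) \gg c$, and establishing this separation forces the trivial character out of $V_{\la_1}$, yielding $m_1 \ge 7$.

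The main obstacle I anticipate is the quantitative step (ii): one needs the bound for $\mu_1(O)$ from the orbifold trace formula to comfortably exceed the bound $c$ for $\la_1$ on $\X$, with both carried out in interval arithmetic. This requires careful control of the hyperbolic identity integral, of the three elliptic terms, and of the truncation of the positive geometric sum on $O$, at a precision compatible with the narrow window $[1.23, 1.26]$ in which $\la_1$ is ultimately located. Once the inequality $\mu_1(O) > c$ is in hand, the representation-theoretic dichotomy on the degrees of $\PSL_2(8)$ closes out the proof.
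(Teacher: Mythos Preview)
Your reduction is exactly what the paper does: since $\PSL_2(8)$ is perfect, its only one-dimensional representation is the trivial one, and the list of irreducible degrees $1,7,7,7,7,8,9,9,9$ forces $\dim V_{\la_1}\ge 7$ once the trivial summand is excluded. The divergence is entirely in how that exclusion is argued.

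The paper does no computation at this step. It simply invokes \cite[Lemma~9.2]{FBP23}, a general result (not specific to the Fricke--Macbeath surface) asserting that no one-dimensional subrepresentation of the automorphism group can occur in the $\la_1$-eigenspace. That lemma is qualitative, resting on nodal-domain properties of first eigenfunctions rather than on any numerical input; in particular it needs no bound on $\la_1(\X)$ and no information about $\mu_1(O)$.

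Your proposed route---a quantitative separation $\la_1(\X)<\mu_1(O)$ via two independent trace-formula computations---is sound in principle and would succeed, since $\mu_1$ of the $(2,3,7)$ orbifold is far above $1.26$. But what you have written is a plan, not a proof, and it is much heavier than necessary: you would have to set up and rigorously evaluate the orbifold trace formula with its elliptic terms, machinery the paper never touches. There is also a logical ordering point to respect: in this paper Lemma~\ref{prop:mult7} is an \emph{input} to the trace-formula propositions that pin down $\la_1$ (Propositions~\ref{prop:subinterval1}--\ref{prop:subinterval3} invoke $m_1\ge 7$ explicitly, and Proposition~\ref{prop:subinterval4} relies on them), so your step~(i) cannot simply quote those bounds; you would need a separate, self-contained argument producing $\la_1\le c$ with no hypothesis on the multiplicity.
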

\begin{proof}
By~\cite[Lemma 9.2]{FBP23}, no 1-dimensional subrepresentation of $\Aut(\X)$ can occur in the eigenspace $V_{\la_1}$ associated to $\la_1$.
From the character table of $\Aut(\X)$ given in Table~\ref{table:character_table}, we know that $\Aut(X)$ has irreducible representations of dimensions 1,7,8, and 9.
As a result, $V_{\la_1}$ contains a direct summand with a dimension of at least 7.
\end{proof}

Our main tool for the proof of Theorem \ref{thm:mult7} is the Selberg trace formula \eqref{eq:STF1}.
Following the notation in~\cite{FBP21} for the Selberg trace formula, we define
\begin{align*}
\calS &= \sum_{n=0}^{\infty} h(r_{n}),\\
\calI &= 2(g(X)-1)\int_0^\infty r h(r) \tanh (\pi r)\, dr,\\
\calG &= \frac{1}{\sqrt{2\pi}}\sum_{\{\gamma\} \in \calC(\Gamma)} \frac{\La(\gamma)}{2 \sinh(\ell(\gamma)/2)} g(\ell(\gamma)),\\
\end{align*}
so that the Selberg trace formula is compactly expressed as \(\mathcal{S} = \mathcal{I} + \mathcal{G}\).
Here, \(g(X)\) denotes the genus of a surface \(X\), which is 7 in our case.
We have also utilized the relation \(\mathrm{area}(X) = 4\pi (g(X) - 1)\) and the even nature of the function \(h\) to refine the expression for \(\mathcal{I}\).

For our proof of Theorem~\ref{thm:mult7}, we will divide the interval $[0,1.26]$ into four subintervals and establish the following:
\begin{itemize}
    \item $\la_1 \notin [0, 0.25]$ (\refprop{prop:subinterval1})
    \item $\la_1 \notin [0.25, 1]$ (\refprop{prop:subinterval2})
    \item $\la_1 \notin [1, 1.23]$ (\refprop{prop:subinterval3})
    \item $\la_1 \in [1.23, 1.26]$ (\refprop{prop:subinterval4}) and its multiplicity is 7 (\refprop{prop:subinterval4-2})
\end{itemize}
We will choose suitable test functions $h,g$ to derive the desired conclusions.
These test functions have a particular form with many adjustable parameters.
Hence the challenge in applying the Selberg trace formula lies in finding the optimal parameters for these test functions.

In subsections~\ref{subsec:subinterval1} and \ref{subsec:subinterval2}, 
$h(\xi)$ will be given as a linear combination of functions of the form
$\left(\frac{\sin(\delta \xi / 2)}{\delta \xi / 2}\right)^{4}\cos(n \delta \xi)$ for some fixed $\delta\in \R$ with $n\in \Z$.
Such $h$ is an entire function and satisfies the decay properties as given in \eqref{eq:hdecay}.
Its Fourier transform $g$ is an even function with compact support.
Such $h,g$ form an admissible transform pair.
These type of functions were used in~\cite[Section 4]{MR2338122} and~\cite[Section 5]{MR4186122} to verify 
the Selberg eigenvalue conjecture for many congruence subgroups of small conductor.

In subsections~\ref{subsec:subinterval3} and \ref{subsec:subinterval4}, we will use test functions involving the \emph{Hermite polynomials} $\{H_n\}_{n\in \Z_{\geq 0}}$.
They are orthogonal polynomials with weight $e^{-x^2}$ on the real line $\R = (-\infty, \infty)$.
They can be also defined by the Rodrigues formula
\[
H_n(x) = (-1)^n e^{x^2} \frac{d^n}{dx^n}(e^{-x^2}).
\]
For even and odd values of \(n\), \(H_n\) is respectively even and odd, satisfying the relation:
\[
H_n(-x) = (-1)^n H_n(x).
\]
They have a close relationship with certain Laguerre polynomials $\{L_n^{(\alpha)}(x)\}$.
More specifically, the Hermite polynomials of even order satisfies
\[
H_{2n}(x) = (-1)^n 4^{n} n! L_n^{(-1/2)}(x^2).
\]
Let $h_n(x) = H_n(x) e^{-x^2/2}$.
They are eigenfunctions of the Fourier transform with eigenvalues $\pm 1, \pm \i$.
In particular, we have $\what{h_{2n}}(\xi) = (-1)^n h_{2n}(\xi)$.

Our test functions $h,g$ will of the form:
\begin{align}
g(x) &= \left(\sum_{n=0}^N s_n L_n^{(-1/2)}(x^2) \right) e^{-x^2/2} =: u(x^2) e^{-x^2/2}, \notag \\
h(\xi) &=  \left(\sum_{n=0}^N (-1)^n s_n L_n^{(-1/2)}(\xi^2) \right) e^{-\xi^2/2} =: v(\xi^2) e^{-\xi^2/2}. \label{eq:gh_hermite}
\end{align}
The rational numbers $\{s_n\}$ in \eqref{eq:gh_hermite} are what we specify in concrete applications.
In this instance, \( h \) is an even entire function that exhibits rapid decay, readily meeting the criteria set by \eqref{eq:hdecay}. 
Thus, \( h \) and \( g \) together make up an admissible transform pair.
Note that $u$ and $v$ are polynomials with rational coefficients.

These test functions were used in the sphere packing problem~\cite[Section 7]{MR1973059}.
They were first used to study the spectrum of hyperbolic surfaces in~\cite{FBP21}.
They prove particularly valuable when constructing test functions that satisfy some sign conditions for both $h$ and its Fourier transform.

\subsection{Proof of $\la_1 \notin [0, 0.25]$}
\label{subsec:subinterval1}
\begin{prop}\label{prop:subinterval1}
There is no eigenvalue in the interval $(0,1/4]$.
\end{prop}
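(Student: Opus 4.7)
The plan is to apply Selberg's trace formula $\calS = \calI + \calG$ with a test function $h$ engineered so that (i) $h(\xi)\geq 0$ for every $\xi\in\R$, (ii) $h(it)>0$ strictly for every $t\in[0,1/2)$, and (iii) $\calI+\calG\leq h(i/2)$. The trivial eigenvalue $\la_0=0$ yields $r_0=i/2$, and any eigenvalue $\la\in(0,1/4]$ would yield $r(\la)=it$ with $t\in[0,1/2)$; so by (i),
\[
\calS \;\geq\; h(i/2) + \sum_{\la_n\in (0,1/4]} h(r_n).
\]
Combined with (iii), every contribution $h(r_n)$ for $\la_n\in(0,1/4]$ must vanish, which contradicts (ii) unless no such $\la_n$ exists.

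For the construction of $h$, I would follow the recipe of~\cite{MR2338122,MR4186122} and take
\[
h(\xi) \;=\; \sum_{k} c_k\,\left(\frac{\sin(\delta \xi/2)}{\delta \xi/2}\right)^{4}\cos(n_k\,\delta\,\xi)
\]
with $\delta>0$, finitely many integers $n_k\geq 0$, and real coefficients $c_k$. Each summand is entire, even, and of size $O((1+|\xi|)^{-4})$, so $h$ is admissible in the sense of~\eqref{eq:hdecay}. Its Fourier transform $g$ is a $C^{2}$ piecewise cubic of support radius at most $(2+\max_k n_k)\delta$. Choosing the parameters so that this support radius is strictly less than $2\arccosh(z_1)\approx 8.30$ restricts $\calG$ to a finite sum over the $252$ primitive conjugacy classes of length $2\arccosh(z_0)$ and, if desired, the $504$ primitive classes of length $2\arccosh(z_1)$ supplied by \refprop{prop:shortgeo}; all the relevant lengths are known as explicit algebraic numbers, so $\calG$ can be enclosed rigorously by interval arithmetic, and the identity contribution $\calI$ is an elementary integral against $r\tanh(\pi r)$ that is similarly certified.

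The main obstacle is the parameter design. Conditions (i) and (ii) are two infinite families of linear inequalities in the coefficients $c_k$, while (iii) is a single scalar inequality depending nonlinearly on $\delta$ and on the placement of the modes $n_k$. I would first locate near-optimal parameters numerically, treating the problem as a semi-infinite linear program on a grid of $\delta$, then certify positivity of $h$ on $\R$ and on the imaginary segment $\{it:t\in[0,1/2]\}$ by interval arithmetic, with a margin sized to absorb rounding error. Once the concrete data $(\delta,\{n_k\},\{c_k\})$ are fixed, verifying $\calI+\calG\leq h(i/2)$ is a routine bookkeeping computation using the algebraic enclosures of $z_0,z_1$ from \refprop{prop:shortgeo}, and the proposition follows.
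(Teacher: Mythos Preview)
Your logic in the first paragraph is internally consistent, but condition (iii), $\calI+\calG\leq h(i/2)$, cannot be achieved by any admissible $h$ satisfying (i). Indeed, $\calI+\calG=\calS=h(r_0)+\sum_{n\geq 1}h(r_n)$, and by (i) every term with $\la_n>1/4$ contributes $h(r_n)\geq 0$. Thus (iii) forces $\sum_{n\geq 1}h(r_n)\leq 0$, hence $h(r_n)=0$ for \emph{every} eigenvalue $\la_n>1/4$. But $h$ is a nonzero entire function, so its real zeros are isolated, while by Weyl's law there are infinitely many $r_n$ in any half-line; they cannot all sit at zeros of $h$. So no tuning of $\delta$, the $n_k$, or the $c_k$ will ever produce (iii), and your semi-infinite LP will be infeasible.

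What you are missing is the use of Lemma~\ref{prop:mult7}: the first nontrivial eigenspace has dimension at least $7$. The paper exploits this by relaxing your (iii) to the much weaker requirement $\calI+\calG-h(i/2)<7$, and strengthening (ii) to $h(it)\geq 1$ for $t\in[0,1/2]$. Then a hypothetical $\la_1\in(0,1/4]$ would contribute at least $7$ to $\calS-h(r_0)$, contradicting the bound. This relaxation makes the inequality entirely feasible: with the single test function $h(r)=(\sin r/r)^4$ one has $g$ supported in $[-4,4]\subset(-\sys(\X),\sys(\X))$, so $\calG=0$ outright (no need for the length data of \refprop{prop:shortgeo} here), and a direct estimate gives $\calI-h(i/2)<6.7<7$.
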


\begin{proof}
We choose the test function
\[
h(r)=\left(\frac{\sin r}{r}\right)^{4}.
\]
Then its Fourier transform $g$ is explicitly given by
\begin{align*}
g(u) = 
\frac{1}{192} \Big[ &(u-4)^3 \sgn(u-4)  - 4 (u-2)^3 \sgn(u-2) +6 u^3 \sgn(u)  \\
&- 4 (u+2)^3 \sgn(u+2) + (u+4)^3 \sgn(u+4) \Big].
\end{align*}
Note that $g$ is supported in the interval $[-4,4]$.
Given that $\sys(\X) > 5$, the hyperbolic classes do not contribute to $\calG$, resulting in $\calG = 0$.
Thus, the trace formula can be simply expressed as $\calS = \calI$.

Let us find an upper bound for $\calS$.
We put $\eps=10^{-3}$ and $L=1000$ and split the integral for $\calI$ as
\begin{align*}
\calI_1 & = 2(g(\X)-1)\int_0^{\eps} r h(r) \tanh(\pi r) \,dr\\
\calI_2 &= 2(g(\X)-1)\int_{\eps}^{L} r h(r) \tanh(\pi r) \,dr\\
\calI_3 & = 2(g(\X)-1)\int_{L}^{\infty}  r h(r) \tanh(\pi r) \,dr
\end{align*}
Using the inequality 
$\sin(x) \leq x - \frac{x^3}{6} + \frac{x^5}{120}$ for $x\geq 0$, 
we can check that
$h(r)\leq (1 - r^2/6 + r^4/120)^4 \leq 1-\frac{2 r^2}{3}+\frac{r^4}{5}$ for $r\leq 1$.
Also note that
$\tanh(\pi r) = \frac{e^{2 \pi  r}-1}{e^{2 \pi  r}+1} \leq 1$.
Thus,
\begin{equation}\label{eq:upper_bound_I1}
\calI_1 \leq 2(g(\X)-1) \int_0^{\eps} r \left(1-\frac{2 r^2}{3}+\frac{r^4}{5}\right)\,dr
< 6\times 10^{-6}.    
\end{equation}
For the second term, we employ interval arithmetic to obtain the following:
\[
\calI_2 = 2(g(\X)-1)\int_{\eps}^{L} r h(r) \tanh(\pi r) \,dr < 7.8684.
\]
Since the function $h(r)$ is bounded by $1/r^4$, 
\[
\calI_3 \leq 2(g(\X)-1) \int_{L}^\infty r \frac{1}{r^4} \,dr  = \frac{g-1}{L^2} =  6\times 10^{-6}.
\]
Note also that
$h(r_0) = h(i/2) = 16 \sinh\left(\frac{1}{2}\right)^4 > 1.1797$.
We can deduce that 
\begin{equation}\label{upper_bound_I}
\calS-h(r_0) = \calI-h(r_0) =  \calI_1+\calI_2+\calI_3-h(r_0)<6.7.    
\end{equation}
See the Sage~\cite{sagemath} notebook \texttt{subinterval1.ipynb} for the necessary numerical calculations.

Assume that $0\leq \la_1 \leq 1/4$.
Then $r_1 = i\nu_1$ for some $0\leq \nu_1 \leq 1/2$.
Because its multiplicity must be at least 7 (Lemma~\ref{prop:mult7}), $h(i\nu)=\frac{\left(e^{\nu }-e^{-\nu }\right)^4}{16 \nu ^4}\geq 1$ for $0\leq \nu \leq 1/2$, and $h(r)>0$ for all $r\in \R$, we have 
\[
\calS-h(r_0) = \sum_{n=1}^{\infty} h(r_n) \geq 7.
\]
This contradicts the bound \eqref{upper_bound_I} for $\calS$.
\end{proof}

\subsection{Proof of $\la_1 \notin [0.25, 1]$}
\label{subsec:subinterval2}
Consider the function $f$ given by
\begin{equation}\label{eqn:hsqrth}
f(r)=\left(\frac{\sin(\delta r / 2)}{\delta r / 2}\right)^{2} \sum_{n=0}^{M-1} x_{n} \cos(\delta n r),
\end{equation}
where $M=15$ and $\delta = 1/5$.
The coefficients $x_i$ are given by:
\begin{align*}
x_{0} & = 0.10495, & x_{5} & = 0.16480, & x_{10} & = 0.06400, \\
x_{1} & = 0.20797, & x_{6} & = 0.14688, & x_{11} & = 0.04416, \\
x_{2} & = 0.20226, & x_{7} & = 0.12715, & x_{12} & = 0.02632, \\
x_{3} & = 0.19294, & x_{8} & = 0.10628, & x_{13} & = 0.01154, \\
x_{4} & = 0.18032, & x_{9} & = 0.08497, & x_{14} & = 0.00046.
\end{align*}

We select the test function $h$ to be the square of $f$:
\begin{align}
h(r) & = f(r)^{2} \label{eqn:hfsquare} \\
     & = \left(\frac{\sin(\delta r / 2)}{\delta r / 2}\right)^{4} \sum_{{i=0}}^{{M-1}} \sum_{{j=0}}^{{M-1}}     
     \cos(i \delta r) \cos(j \delta r) x_{i} x_{j} \nonumber \\
     & = \left(\frac{\sin(\delta r / 2)}{\delta r / 2}\right)^{4}\sum_{{i=0}}^{{M-1}} \sum_{{j=0}}^{{M-1}}     
     \frac{1}{2} \left[ \cos((i-j) \delta r) + \cos((i+j) \delta r) \right]x_{i} x_{j}. \nonumber
\end{align}

\begin{prop}\label{prop:upper_bound_II}
For the test function $h$ in \eqref{eqn:hfsquare}, we have $\calS-h(r_0)<7$.
\end{prop}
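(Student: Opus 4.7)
The plan is to apply the Selberg trace formula $\calS = \calI + \calG$ with the specific test function $h = f^{2}$, and show $\calI + \calG - h(r_{0}) < 7$ by a combination of exact computation of $\calG$ and interval-arithmetic estimates of $\calI$ in the spirit of Proposition~\ref{prop:subinterval1}.

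The critical structural observation is that $g = \what{h}$ has compact support. The function $\left(\frac{\sin(\delta r / 2)}{\delta r / 2}\right)^{2}$ is the Fourier transform of a triangle function supported in $[-\delta,\delta]$; multiplying by $\cos(n\delta r)$ translates this support to $[-(n+1)\delta,-(n-1)\delta]\cup[(n-1)\delta,(n+1)\delta]$. Since $f$ ranges over $0\leq n\leq M-1$ with $M=15$, $\delta=1/5$, the Fourier transform of $f$ is supported in $[-M\delta,M\delta]=[-3,3]$, and therefore $g = \what{h} = \what{f}*\what{f}$ is supported in $[-2M\delta,2M\delta] = [-6,6]$. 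By Proposition~\ref{prop:shortgeo}, only the shortest conjugacy classes, of length $2\arccosh(z_{0})\approx 5.796 < 6$, contribute to $\calG$, because the next-shortest length $2\arccosh(z_{1})\approx 8.304$ exceeds $6$.

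Consequently $\calG = \dfrac{252}{\sqrt{2\pi}}\cdot\dfrac{\ell(\gamma)}{2\sinh(\ell(\gamma)/2)}\,g(\ell(\gamma))$, where $\gamma$ is a representative of the shortest class (primitive, so $\La(\gamma)=\ell(\gamma)=2\arccosh(z_{0})$) and there are $252$ such classes. Using the precise enclosure of $z_{0}$ from Proposition~\ref{prop:shortgeo} together with the explicit trigonometric expansion of $g(\ell(\gamma))$ obtained from \eqref{eqn:hfsquare}, this yields a tight numerical interval for $\calG$. For $\calI = 12\int_{0}^{\infty} r\, h(r)\tanh(\pi r)\,dr$, I mimic the split used in Proposition~\ref{prop:subinterval1}: on a small initial segment $[0,\eps]$ bound $h$ by a polynomial via its Taylor expansion at the origin (recalling $h(0)=(\sum_{n}x_{n})^{2}$); on the middle segment $[\eps,L]$ evaluate the integral by rigorous interval arithmetic; and on $[L,\infty)$ use the pointwise bound $h(r)\leq C/r^{4}$, inherited from the $\mathrm{sinc}^{4}$ factor (with $C = (\sum_{n}|x_{n}|)^{2}(2/\delta)^{4}$), to control the tail.

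Finally, compute $h(r_{0})=f(i/2)^{2}$ exactly from the coefficients $x_{n}$ via $\cos(i\delta n/2)=\cosh(\delta n/2)$ and the corresponding $\sinh$-identity for the $\mathrm{sinc}$ factor, and verify $\calI+\calG-h(r_{0})<7$. The coefficients $\{x_{i}\}$ were evidently tuned by numerical optimization to make this inequality as tight as possible, so the main obstacle is purely computational: one must carry enough precision in the interval arithmetic for $\calG$ (where the $1/\sinh(\ell/2)$ factor is small but the count $252$ is large) and for $\calI$ (where the integrand oscillates and has its bulk on $[0,L]$), so that the rounded enclosures of $\calI$, $\calG$, and $h(r_{0})$ combine to rigorously establish the strict bound $<7$.
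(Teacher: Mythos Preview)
Your proposal is correct and follows the same mathematical strategy as the paper: exploit the compact support of $g$ (in $[-6,6]$) so that only the $252$ shortest conjugacy classes contribute to $\calG$, estimate $\calI$ by splitting $[0,\infty)$ into a Taylor-controlled piece near $0$, an interval-arithmetic piece on $[\eps,L]$, and a tail bounded via the $\mathrm{sinc}^{4}$ decay, then combine with the exact value of $h(r_{0})=f(i/2)^{2}$.

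The only difference is organizational. The paper expands $h=\sum_{i,j} h_{ij}x_{i}x_{j}$ with $h_{ij}(r)=\tfrac12\bigl(\tfrac{\sin(\delta r/2)}{\delta r/2}\bigr)^{4}[\cos((i-j)\delta r)+\cos((i+j)\delta r)]$, and computes once and for all the $29$ integrals $\calI_{0},\dots,\calI_{28}$ and the two nonzero geodesic terms $\calG_{27},\calG_{28}$, so that $\calS=\sum_{i,j}a_{ij}x_{i}x_{j}$ becomes a quadratic form in the coefficients $x_{i}$. This decomposition is what one naturally obtains when \emph{searching} for optimal $x_{i}$ (the building blocks are independent of the $x_{i}$), whereas your direct evaluation of $\calI$ and $\calG$ for the assembled $h$ is more economical if the $x_{i}$ are already fixed. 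Either route yields the same bound (the paper obtains $\calS-h(r_{0})<6.76$).
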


\begin{proof}
Let $h_{ij}(r) : =\frac{1}{2} \left(\frac{\sin(\delta r / 2)}{\delta r / 2}\right)^{4}
     \left[ \cos((i-j) \delta r) + \cos((i+j) \delta r) \right]$.
Then 
\[
h(r) = \sum_{{i=0}}^{{M-1}} \sum_{{j=0}}^{{M-1}}h_{ij}(r)x_i x_j.
\]

Then we obtain the following expression from \eqref{eqn:hfsquare}:
\begin{equation}\label{eq:calS}
\mathcal{S} = \sum_{n=0}^{\infty} h(r_n) = \sum_{i=0}^{M-1} \sum_{j=0}^{M-1} a_{ij} x_{i} x_{j},
\end{equation}
where $a_{ij} =  \sum_{n=0}^{\infty} h_{ij}(r_n)$.

For fixed $\delta$ and $j$, introduce the following function with respect to the variable $r$:
\[
h(r, \delta, j) = \left(\frac{\sin(\delta r / 2)}{\delta r / 2}\right)^{4}\cos(j \delta r)
\]
and its Fourier transform, which is a function of $u$:
\begin{align*}
g(u, \delta, j) = \frac{1}{24 \delta^4} (&6 (u - j \delta)^3 \text{sgn}(u - j \delta) + (u - 2\delta - j \delta)^3 \text{sgn}(u - 2\delta - j \delta) \\
&- 4 (u - \delta - j \delta)^3 \text{sgn}(u - \delta - j \delta) \\
&- 4 (u + \delta - j \delta)^3 \text{sgn}(u + \delta - j \delta) + (u + 2\delta - j \delta)^3 \text{sgn}(u + 2\delta - j \delta) \\
&+ 6 (u + j \delta)^3 \text{sgn}(u + j \delta) + (u - 2\delta + j \delta)^3 \text{sgn}(u - 2\delta + j \delta) \\
&- 4 (u - \delta + j \delta)^3 \text{sgn}(u - \delta + j \delta) - 4 (u + \delta + j \delta)^3 \text{sgn}(u + \delta + j \delta) \\
&+ (u + 2\delta + j \delta)^3 \text{sgn}(u + 2\delta + j \delta)).
\end{align*}
For each $0\leq i\leq 2M-2 = 28$, we put
\begin{align*}
\calI_i & = 2(g(\X)-1)\int_0^\infty r h(r , \delta, i) \tanh(\pi r) \,dr,\\
\calG_i & = \sum_{\{\gamma\} \in \calC(\Gamma)} \frac{\La(\gamma)}{2 \sinh(\ell(\gamma)/2)}g(\ell(\gamma), \delta, i).
\end{align*}
From the Selberg trace formula, we can write
\begin{equation}\label{eq:a_ij}
a_{ij} = \frac{1}{2}(\mathcal{I}_{|i-j|} + \mathcal{I}_{i+j} + \mathcal{G}_{|i-j|} + \mathcal{G}_{i+j}).
\end{equation}

Note that each 
$g(u, \delta, j)$ has support in $[-(2+j)\delta, (2+j)\delta]$.
For $0\leq j \leq 26$, $\calG_{j}=0$ because $\ell_0 =\sys(\X)$ is outside the support.
Specifically, $(2+j)\delta\leq 28\cdot \frac{1}{5} <\ell_0 \approx 5.796$.
For $j=27$ or $28$, only the conjugacy classes of the shortest length can contribute to the sum $\calG_j$.
That is to say,
\[
\calG_{j} = \frac{252\ell_0}{2 \sinh(\ell_0/2)}g(\ell_0, \delta, j),\, j\in \{27, 28\}.
\]

Let us consider $\calI_i = 2(g(\X)-1)\int_0^\infty r h(r , \delta, i) \tanh(\pi r) \,dr$.
Put $\eps=10^{-3}$ and $L=10^5$.
We intend to approximate $\calI_i$ by
\[
\calI_i' : = 2(g(\X)-1)\int_\eps^L r h(r , \delta, i) \tanh(\pi r) \,dr.
\]
Let us find a bound for the deviation between $\calI_i$ and $\calI_i'$.
As in \eqref{eq:upper_bound_I1}, we have
\[
\left|h(r, \delta, j)\right| \leq 1 - \frac{r^2 \delta^2}{6} + \frac{r^4 \delta^4}{80},\quad r\leq \frac{1}{\delta}.
\]
From this, it follows that:
\[
2(g(\X)-1)\left|\int_0^\eps r h(r , \delta, i) \tanh(\pi r) \,dr\right| \leq 
2(g(\X)-1)\int_0^\eps r(1 - \frac{r^2 \delta^2}{6} + \frac{r^4 \delta^4}{80})  \,dr
<6\times 10^{-6}.
\]
And using $|h(r,\delta, i)|\leq 1/r^4$, we obtain
\[
2(g(\X)-1)\left|\int_L^\infty r h(r , \delta, i) \tanh(\pi r) \,dr\right| \leq 
2(g(\X)-1)\int_L^\infty r(\frac{1}{\delta r/2})^4  \,dr
=6\times 10^{-6}.
\]
Therefore,
\[
\left|\calI_i - \calI_i' \right|<1.2\times 10^{-5}.
\]
In this way, we can estimate the values for $\calI_0,\dots, \calI_{28}$, $\calG_{27}$, and $\calG_{28}$ within a known error bound by utilizing interval arithmetic as detailed below:
\[
\begin{array}{c|c|c|c|c|c}
& \text{approximate value} & \text{error bound} & & \text{approximate value} & \text{error bound} \\
\hline
\calI_{0} & 831.2770 & 0.0000383 & \calI_{1} & -180.9242 & 0.0000419 \\
\calI_{2} & -110.5626 & 0.0000416 & \calI_{3} & -38.3114 & 0.0000548 \\
\calI_{4} & -20.5191 & 0.0000292 & \calI_{5} & -12.9689 & 0.0000616 \\
\calI_{6} & -9.0154 & 0.0000125 & \calI_{7} & -6.6722 & 0.0000182 \\
\calI_{8} & -5.1618 & 0.0000379 & \calI_{9} & -4.1264 & 0.0000124 \\
\calI_{10} & -3.3822 & 0.0000410 & \calI_{11} & -2.8265 & 0.0000515 \\
\calI_{12} & -2.3986 & 0.0000387 & \calI_{13} & -2.0604 & 0.0000594 \\
\calI_{14} & -1.7873 & 0.0000287 & \calI_{15} & -1.5626 & 0.0000498 \\
\calI_{16} & -1.3749 & 0.0000347 & \calI_{17} & -1.2160 & 0.0000273 \\
\calI_{18} & -1.0800 & 0.0000193 & \calI_{19} & -0.9625 & 0.0000470 \\
\calI_{20} & -0.8601 & 0.0000281 & \calI_{21} & -0.7704 & 0.0000150 \\
\calI_{22} & -0.6913 & 0.0000281 & \calI_{23} & -0.6213 & 0.0000201 \\
\calI_{24} & -0.5591 & 0.0000605 & \calI_{25} & -0.5036 & 0.0000570 \\
\calI_{26} & -0.4539 & 0.0000547 & \calI_{27} & -0.4095 & 0.0000170 \\
\calI_{28} & -0.3696 & 0.0000159 & & & \\
\hline
\calG_{27} & 0.0002 & 0.0000414 & \calG_{28} & 35.5542 & 0.0000587 \\
\end{array}
\]
See the Sage notebook \texttt{subinterval2.ipynb} for the necessary numerical calculations.

By applying interval arithmetic to \eqref{eq:calS} and \eqref{eq:a_ij}, we finally obtain
\[
\calS- h(r_0) < 6.76,
\]
thereby establishing the proposition.
\end{proof}

\begin{lemma}\label{lem:subinterval2}
For $r\in [0,0.87]$, $h(r)\geq 1$.    
\end{lemma}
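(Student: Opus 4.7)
The strategy is a rigorous numerical verification, in line with the interval-arithmetic approach employed throughout Section 4. Since $h = f^2$ where $f$ is the explicit trigonometric expression in \eqref{eqn:hsqrth}, and since the coefficients $x_n$, the value $\delta = 1/5$, and $M = 15$ are explicit rational data, the function $h - 1$ is an entire real-analytic function that can be enclosed rigorously on any subinterval of $[0, 0.87]$ via the natural interval extensions of $\sin$, $\cos$, and polynomial operations.

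Before turning to the interval computation, I would first verify the endpoint behavior: at $r = 0$ one has $f(0) = \sum_{n=0}^{14} x_n \approx 1.665$, so $h(0) \approx 2.77$, comfortably above $1$. Since $h \geq 0$ by construction and is continuous, the lemma is equivalent to the statement that $h$ never dips below $1$ on the compact interval $[0, 0.87]$.

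The most direct route is to partition $[0, 0.87]$ into subintervals $J_k = [r_k, r_{k+1}]$ of small mesh size $\eta$ and compute a rigorous enclosure $[h_k^-, h_k^+]$ of $h(J_k)$ on each $J_k$. Certifying $h_k^- \geq 1$ for every $k$ proves the lemma. A variant, which is often more efficient, is to bound $|h'(r)|$ on $[0, 0.87]$ by an explicit Lipschitz constant $C$ obtained from term-by-term differentiation of \eqref{eqn:hsqrth} (using $|\cos|, |\sin| \leq 1$ and uniform bounds on the sinc factor and its derivative), then verify $h(r_k) > 1 + C\eta$ rigorously at the grid points $r_k$ with spacing $\eta$; the mean value theorem then propagates the inequality to each cell. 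Either variant is of the same character as the computations already carried out in \texttt{subinterval1.ipynb} and \texttt{subinterval2.ipynb}.

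The main obstacle — and the reason this lemma merits a separate statement rather than being a trivial remark — is that the coefficients $\{x_n\}$ in \eqref{eqn:hsqrth} appear to have been optimized so that $h(r)$ approaches $1$ very closely somewhere near the right endpoint of $[0, 0.87]$, which is essentially $\sqrt{3}/2$, the value of $r$ corresponding to $\lambda = 1$. Consequently the grid must be fine enough to resolve this tight margin, and the enclosures must use sufficient working precision so that overestimation in the interval arithmetic does not swamp the actual safety margin. Once a mesh $\eta$ compatible with the floating-point minimum of $h - 1$ on $[0, 0.87]$ is chosen, the verification is routine and can be discharged by a short Sage computation.
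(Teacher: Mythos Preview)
Your approach would work, but the paper's proof is considerably simpler and avoids the fine-mesh difficulty you anticipate. The paper observes that every coefficient $x_n$ is positive and that, on $[0,0.87]$, both the factor $\bigl(\sin(\delta r/2)/(\delta r/2)\bigr)^{2}$ and each $\cos(n\delta r)$ (since $n\delta r \le 14\cdot\tfrac{1}{5}\cdot 0.87 < \pi$) are monotonically decreasing. Hence $f$ itself is monotonically decreasing on $[0,0.87]$, and a single interval-arithmetic evaluation showing $f(0.87)\ge 1$ yields $f(r)\ge 1$ on the whole interval, whence $h=f^{2}\ge 1$.

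So the difference is structural: you propose a global subdivision-and-enclose argument on $h$, requiring a mesh fine enough to beat the tight margin near the right endpoint, whereas the paper exploits the positivity of the $x_n$ to reduce everything to one endpoint check on $f$. Your method is correct and general (it would work even if some $x_n$ were negative), but here the monotonicity shortcut makes the verification essentially a one-line computation and removes any concern about mesh size or the closeness of $h$ to $1$.
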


\begin{proof}
For each $0\leq i\leq 14$,
we have $x_i>0$ and
the functions
$\left(\frac{\sin(\delta r / 2)}{\delta r / 2}\right)^{2}$ 
and $\cos(i\delta r)$ are monotonically decreasing over the given interval since
$i\delta = i/5 < \pi$.
Thus, $f(r)$ in \eqref{eqn:hsqrth} is monotonically decreasing over $[0,0.87]$.
Using interval arithmetic, we can check $f(0.87)\geq 1$;
see the Sage notebook \texttt{subinterval2.ipynb} for the evaluation of $f(0.87)$.
This implies $f(r)\geq 1$ for $r\in [0,0.87]$.
\end{proof}

\begin{prop}\label{prop:subinterval2}
There is no eigenvalue in the interval $[1/4,1]$.
\end{prop}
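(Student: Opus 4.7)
The plan is to combine the two preceding results, \refprop{prop:upper_bound_II} and \reflemma{lem:subinterval2}, with the multiplicity lower bound from \reflemma{prop:mult7} to derive a contradiction assuming $\la_1 \in [1/4, 1]$. The key observation is that the test function $h = f^2$ is pointwise nonnegative by construction, so every term in the spectral side $\calS = \sum_n h(r_n)$ contributes with the same sign, which makes it possible to compare $\calS - h(r_0)$ against a lower estimate driven purely by the eigenvalues just above zero.

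First I would translate the hypothesis $\la_1 \in [1/4, 1]$ into a statement about $r_1$. Since $\la_1 \geq 1/4$ we have $r_1 = \sqrt{\la_1 - 1/4} \in \R$, and the upper bound $\la_1 \leq 1$ gives $r_1 \leq \sqrt{3}/2 < 0.87$. Thus $r_1$ lies in the interval $[0, 0.87]$ on which \reflemma{lem:subinterval2} guarantees $h(r_1) \geq 1$.

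Next I would invoke \reflemma{prop:mult7}, which asserts that the multiplicity $m_1$ of $\la_1$ is at least $7$. Combined with the nonnegativity $h \geq 0$ (because $h = f^2$) and the lower bound $h(r_1) \geq 1$, this yields
\begin{equation*}
\calS - h(r_0) \;=\; \sum_{n=1}^{\infty} h(r_n) \;\geq\; \sum_{n=1}^{m_1} h(r_n) \;=\; m_1 \, h(r_1) \;\geq\; 7.
\end{equation*}
On the other hand, \refprop{prop:upper_bound_II} asserts $\calS - h(r_0) < 7$, which directly contradicts the inequality above. Therefore the hypothesis $\la_1 \in [1/4, 1]$ is untenable, proving the proposition.

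The whole argument is essentially a two-line contradiction once the two lemmas are in hand, so I do not expect any technical obstacle at this stage; the real work has already been front-loaded into the choice of the coefficients $x_0, \dots, x_{14}$ defining $f$, which had to be tuned so that simultaneously (i) the spectral side $\calS$ admits the tight upper bound $< 7 + h(r_0)$ via the Selberg trace formula with only the two shortest geodesic classes contributing to $\calG$, and (ii) $h = f^2$ stays above $1$ throughout $[0, \sqrt{3}/2]$. The monotonicity-based verification of (ii) in \reflemma{lem:subinterval2} relies on $i \delta = i/5 < \pi$ for all $i \leq 14$, which is exactly why $\delta = 1/5$ was selected.
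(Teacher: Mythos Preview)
Your proof is correct and matches the paper's own argument essentially line for line: assume $\la_1\in[1/4,1]$, translate to $r_1\in[0,0.87]$, use $h=f^2\ge 0$ together with $h(r_1)\ge 1$ (Lemma~\ref{lem:subinterval2}) and $m_1\ge 7$ (Lemma~\ref{prop:mult7}) to force $\calS-h(r_0)\ge 7$, contradicting Proposition~\ref{prop:upper_bound_II}. The only cosmetic difference is that the paper first invokes Proposition~\ref{prop:subinterval1} to record that all $r_n$ with $n\ge1$ are real, and it phrases the conclusion as $r_1>0.87\Rightarrow\la_1>1.0069$; also, as a side remark, in Proposition~\ref{prop:upper_bound_II} only the single shortest geodesic length (not the two shortest) actually enters $\calG$.
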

\begin{proof}
From \refprop{prop:subinterval1}, we know $\la_1\geq 1/4$.
Therefore, $r_n\in \R$ for all $n\geq 1$.
Assume that $0\leq r_1 \leq 0.87$.
Then we have $\calS-h(r_0)=\sum_{n=1}^{\infty} h(r_n)\geq 7$ because the multiplicity of $\la_1$ is at least 7 (Lemma~\ref{prop:mult7}), $h(r_1)\geq 1$ (Lemma \ref{lem:subinterval2}), and $h(r)=f(r)^2\geq 0$ for all $r\in \R$ \eqref{eqn:hfsquare}.
This contradicts Proposition~\ref{prop:upper_bound_II}.
Therefore, $r_1 > 0.87$ and $\la_1 = r_1^2+1/4>1.0069>1$.
\end{proof}

\subsection{Proof of $\la_1\notin [1,1.23]$}
\label{subsec:subinterval3}
\begin{prop}\label{prop:subinterval3}
There is no eigenvalue in the interval $[1,1.23]$.
\end{prop}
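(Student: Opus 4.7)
The plan follows the same template as \refprop{prop:subinterval1} and \refprop{prop:subinterval2}, but now using a Hermite-polynomial-based admissible transform pair $(h,g)$ of the form \eqref{eq:gh_hermite}. If $\lambda_1 \in [1, 1.23]$ then $r_1 \in [\sqrt{3}/2,\sqrt{0.98}]$, so I will seek rational coefficients $s_0,\dots,s_N$ producing a test function $h(\xi) = v(\xi^2)e^{-\xi^2/2}$ satisfying
\begin{itemize}
\item $h(r) \geq 0$ for every $r \in \mathbb{R}$,
\item $h(r) \geq 1$ throughout the interval $[\sqrt{3}/2,\sqrt{0.98}]$,
\item $h(r_0)=h(i/2) \geq 0$ and the Selberg trace formula yields $\mathcal{S} - h(r_0) < 7$.
\end{itemize}
Combined with Lemma~\ref{prop:mult7} and the fact from \refprop{prop:subinterval2} that every $r_n$ with $n\geq 1$ is real, the first two conditions would force $\mathcal{S} - h(r_0) \geq 7$, contradicting the last one.

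Computationally, both sides of the trace formula become clean because $h$ and $g$ decay like Gaussians. The identity contribution $\mathcal{I} = 2(g(\X)-1)\int_0^\infty r h(r) \tanh(\pi r)\,dr$ is an integral of a polynomial times $r e^{-r^2/2} \tanh(\pi r)$, which is evaluated by interval arithmetic after truncation at some moderate $L$; the tail is dominated by $e^{-L^2/2}$ and is negligible. For the geometric side, Gaussian decay of $g$ means that only short geodesics contribute appreciably: the $252$ classes of length $2\arccosh(z_0)$ and the $504$ classes of length $2\arccosh(z_1)$ from \refprop{prop:shortgeo} are evaluated exactly using the algebraic data for $z_0,z_1$. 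Contributions of classes with $\ell(\gamma)\geq \ell^*$ are bounded by combining Gaussian decay of $g(\ell)/\sinh(\ell/2)$ with an elementary upper bound on the length counting function (e.g.\ the prime geodesic theorem or the simple area-based bound $\#\{\{\gamma\}:\ell(\gamma)\leq T\} \ll e^T$); choosing $\ell^{*}$ a bit beyond the second shortest length makes this tail entirely negligible.

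The main obstacle is producing the coefficients $s_n$. The two positivity constraints on $h$ form a semi-infinite linear program, which I would attack by discretizing on a dense grid, minimizing an objective of the shape $\mathcal{I}+\mathcal{G}-h(r_0)$ subject to the grid constraints, and then certifying the resulting polynomial $v(\xi^2)e^{-\xi^2/2}$ is nonnegative on $\mathbb{R}$ and at least $1$ on $[\sqrt{3}/2,\sqrt{0.98}]$ by rigorous interval arithmetic (or a sum-of-squares decomposition). The hard part is that $[1,1.23]$ is substantially wider than the intervals in the previous two propositions, so the lower-bound constraint on $h$ is stringent and the slack in $\mathcal{S}-h(r_0)<7$ is small; $N$ must therefore be taken fairly large, but too large an $N$ makes the rigorous positivity verification delicate and numerically fragile. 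A certified choice of parameters will be recorded in a Sage notebook analogous to \texttt{subinterval2.ipynb}, and executing the interval-arithmetic verification there completes the argument.
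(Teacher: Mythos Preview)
Your template matches the paper's broad strategy, and the argument would go through, but there is one substantive design difference worth highlighting. You leave the sign of $g$ unconstrained and propose to control the geodesic tail by combining the Gaussian decay of $g$ with an effective counting bound $\#\{\{\gamma\}:\ell(\gamma)\le T\}\ll e^{T}$. The paper instead imposes a \emph{sign} condition on the Fourier side: it forces $g(x)\le 0$ for every $x\ge\sys(\X)$ by giving $u$ a simple root at $33$ (so $\sqrt{33}<\sys(\X)$) and checking via Sturm's theorem that $u\le 0$ thereafter. Every geodesic term in $\mathcal{G}$ is then non-positive, so an upper bound is obtained simply by keeping the two shortest length classes from \refprop{prop:shortgeo} and discarding the rest---no counting estimate or infinite-sum tail bound is needed at all. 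Your route is workable, but it requires an explicit constant in the $\ll$, whereas the paper's sign trick removes that input entirely.

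A second, more practical difference is how the coefficients $s_n$ are produced. Instead of a discretized LP followed by a post hoc positivity certification, the paper \emph{prescribes} the roots of $u$ and $v$ outright (one simple and one double root for $u$; seven double roots and a single normalization $v(a-\tfrac14)=1$ for $v$), which gives a square linear system with a unique rational solution $\{s_n\}_{n=0}^{17}$. The sign conditions on $h$ and $g$ then reduce to Sturm root counts on single univariate polynomials, and the constant $c$ in $h(r(\lambda))\ge c$ on $[a,b]$ is read off from the endpoints after checking $2v'-v$ has no root there. This is both easier to certify rigorously and far less numerically fragile than optimizing over a high-dimensional cone and then verifying positivity on a grid.
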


\begin{proof}
Let $a = 1$ and $b=1.23$.
We define the test functions $h,g$ as in \eqref{eq:gh_hermite}.
We choose the sequence of rational numbers \(\{s_n\}_{n=0}^{17}\) such that the two polynomials
\begin{align*}
u(y) &= \sum_{n=0}^{17} s_n L_n^{(-1/2)}(y),\\
v(y) &= \sum_{n=0}^{17} (-1)^n s_n L_n^{(-1/2)}(y),
\end{align*}
meet the following conditions:
\begin{itemize}
    \item \(u(y)\) has a simple root at \(33\) and a double root at \(1937\).
    \item \(v(y)\) possesses double roots at 1.83, 4.94, 9.86, 15.35, 22.16, 32.22, 46.41 and additionally fulfills the condition \(v(a-1/4) = 1\).
\end{itemize}
The above conditions produce a system of linear equations for the coefficients \(\{s_n\}_{n=0}^{17}\).
This system has a unique solution over the rationals.

For the necessary numerical computations in this proof, see the Sage notebook \texttt{subinterval3.ipynb}.
We can confirm that both $u$ and $v$ have roots with the specified orders at the designated locations.

Recall that $\ff(x) = u(x^2) e^{-x^2/2}$ and $\hatf(\xi)=v(\xi^2) e^{-\xi^2/2}$.
We need to ensure that they satisfy the following conditions:
\begin{enumerate}
    \item $\ff(x)\leq 0$ for every $x \geq \sys(\X)$.
    \item $\hatf(r(\la))\geq c=0.32388$ for every $\la \in [a, b]$.
    \item $\hatf(r(\la))\geq 0$ for every $\la \geq b$.
\end{enumerate}
To count the number of zeros of a polynomial over a given interval, we can employ Sturm's theorem.
This has been implemented in \texttt{PARI/GP}~\cite{PARI2}, and is also accessible through \texttt{Sage}.

(1) 
We can count the number of zeros of $u$ in $[33, \infty)$ to make sure that it does not change sign, and verify that $u''$ is negative at its unique double zero so that $u$ and hence $\ff$ is non-positive on $[\sqrt{33},\infty)$.
Because $\sqrt{33}<5.75<\sys(\X)$, $\ff(x)\leq 0$ for $x \geq \sys(\X)$.

(2)
We verify that \(2v'-2v\) does not have a root between \(a-1/4\) and \(b-1/4\).
This implies that \(\hatf\) does not exhibit a critical point within the interval from \(r(a)\) to \(r(b)\) since the derivative of \(\hatf\) is given by:
\[
\hatf'(\xi) = (2v'(\xi^2)-v(\xi^2)) e^{-\xi^2/2} \xi.
\]
This implies that the minimum of $\hatf$ on the interval $[r(a),r(b)]$ is achieved at the endpoints. 
Using interval arithmetic, we can check that $\hatf(r(\la)) \geq  c$ for $\la\in [a, b]$.
Note that 
$\hatf(r(a)) \approx 0.6872893$ and $\hatf(r(b)) \approx 0.3238802$.

(3)
To demonstrate that \(\hatf(r(\la)) \geq 0\) holds for all \(\la \geq b\), it's necessary to confirm that \(\hatf\) retains a consistent sign over the interval \([r(b), \infty)\).
To this end, we apply Sturm's theorem to verify that \(v\) has precisely \(7\) distinct zeros, all of which are double zeros, in the interval \([b-1/4,\infty)\). 
Consequently, we deduce that \(v\) does not change sign over \([b-1/4,\infty)\) and, hence, \(\hatf\geq 0 \) over the interval \([r(b),\infty)\) because \(\hatf(r(b)) \geq c\).

With the above test functions $h,g$, we will establish the following inequalities:
\[
\mathcal{I} \leq 9.197, \quad \mathcal{G} \leq -0.10356, \quad -\hatf(r_0) \leq -6.854.
\]

To find an upper bound for $\calI$,
we separate the integral into two parts $\calI = \calI_1+\calI_2$ :
\begin{align*}
\calI_1 & = 2(g(\X)-1) \int_0^{100} r \hatf(r) \tanh(\pi r) dr,\\
\calI_2 & = 2(g(\X)-1) \int_{100}^\infty r \hatf(r) \tanh(\pi r) dr.
\end{align*}
Using interval arithmetic, we can find
\[
\calI_1 \leq 9.1971.
\]
For the remaining part, we have
\[
\calI_2 = 2(g(\X)-1) \int_{100}^\infty r \hatf(r) \tanh(\pi r) dr\leq 2(g(\X)-1) \int_{100}^\infty r \hatf(r)dr.
\]
Observe that for any odd integer $k\geq 1$, the function $r^k e^{-\frac{r^2}{2}}$ has an antiderivative of the form of $e^{-\frac{r^2}{2}}$ multiplied by a polynomial in $r$. 
Consequently, the function $r \hatf(r) = r \, v(r^2) e^{-r^2/2}$ has an antiderivative of the form $V(r) = p(r)e^{-r^2/2}$, where $p$ is a polynomial.
By evaluating it, we obtain the inequality
\[
\calI_2 \leq -2(g(\X)-1)V(100) \approx 1.747\times 10^{-2124}.
\]
Thus, $\calI_2$ is so small as to be practically negligible in comparison to $\calI_1$.

Now we turn to $\calG$.
Because $\ff(x)\leq 0$ for every $x \geq \sys(\X)$,
\begin{align*}
\calG & = \frac{1}{\sqrt{2\pi}}\sum_{\{\gamma\} \in \calC(\Gamma)} \frac{\La(\gamma)}{2 \sinh(\ell(\gamma)/2)} \ff(\ell(\gamma)) \\
& \leq \frac{1}{\sqrt{2\pi}}\sum_{\{\gamma\} \in P} \frac{\ell(\gamma)}{2\sinh(\ell(\gamma)/2)} \ff(\ell(\gamma)),
\end{align*}
where $P$ is any subset of $\calC(\Gamma)$.
By employing interval arithmetic to sum over conjugacy classes with the two shortest lengths described in \refprop{prop:shortgeo}, we obtain
\[
\calG \leq -0.10356.
\]

Finally, we can verify that $\hatf(r_0)\geq 6.854$ using interval arithmetic.

Now we put these pieces together.
Because $\hatf(r(\la))\geq 0$ for $\la \geq b$, the Selberg trace formula gives
\[
\calI+\calG = \calS  =  \sum_{n=1}^{\infty}\hatf(r_n)
\geq \hatf(r_0)+ \sum_{\substack{n=1 \\ \la_n\in [a,b]}}^{\infty}
\hatf(r_n).
\]
From $\hatf(r(\la))\geq c$ for $\la\in [a,b]$, 
we obtain
\[
\calI+\calG - \hatf(r_0)  \geq  c m(a,b),
\]
where $m(a,b)$ denote the number of eigenvalues of the Laplacian in the interval $[a,b]$ counting multiplicity.
Therefore, $m(a,b)$ is less than or equal to
\[
\frac{1}{c}\left(\calI+\calG - \hatf(r_0) \right) \leq 6.9148<7.
\]
From Lemma~\ref{prop:mult7}, we can finish the proof of the proposition.
\end{proof}

\subsection{Proof of $\la_1\in [1.23, 1.26]$ and determination of its multiplicity}\label{subsec:subinterval4}
\begin{prop}\label{prop:subinterval4}
There is an eigenvalue in the interval $[1.23, 1.26]$.
\end{prop}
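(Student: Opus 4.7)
The plan is to mirror the structure of the proof of \refprop{prop:subinterval3}, using the Selberg trace formula with test functions $h,g$ of the form \eqref{eq:gh_hermite}, but now with the sign conditions inverted so that the \emph{absence} of an eigenvalue in $[1.23,1.26]$ forces a contradiction.

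First I would construct $h$ and $g$ as $h(\xi)=v(\xi^{2})e^{-\xi^{2}/2}$ and $g(x)=u(x^{2})e^{-x^{2}/2}$, determined by a rational sequence $\{s_n\}_{n=0}^{N}$. The polynomial $v$ is constrained to satisfy $v(y)\le 0$ for every $y\ge 1.26-\tfrac{1}{4}=1.01$, which is arranged by placing several double roots of $v$ on $[1.01,\infty)$ together with a normalization that fixes $v(-1/4)$, i.e.\ $h(r_{0})=h(i/2)=v(-1/4)e^{1/8}$. The polynomial $u$, determined by the same coefficients up to the sign pattern $(-1)^{n}$, is constrained so that $u(y)\ge 0$ for every $y\ge \sys(\X)^{2}$; this guarantees $g(\ell(\gamma))\ge 0$ on every hyperbolic conjugacy class, whence $\calG\ge 0$. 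These constraints produce a linear system in $\{s_n\}$ over the rationals, whose solution yields explicit rational coefficients; the sign conditions on $u$ and $v$ can then be certified rigorously by Sturm's theorem, exactly as in Subsection~\ref{subsec:subinterval3}.

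With such $h,g$ in hand, I would bound $h(r_{0})=v(-1/4)e^{1/8}$ from above, bound $\calI$ from below by splitting the integral into a main part over some $[0,L]$ evaluated by interval arithmetic plus a negligible Gaussian tail, and bound $\calG$ from below by retaining only the two shortest conjugacy classes from \refprop{prop:shortgeo} (whose contribution is non-negative by construction). The aim is the strict inequality $\calI+\calG > h(r_{0})$. The conclusion then follows by contradiction: if no eigenvalue lay in $[1.23,1.26]$, then by \refprop{prop:subinterval1}, \refprop{prop:subinterval2} and \refprop{prop:subinterval3} every nonzero eigenvalue $\la_{n}$ would satisfy $\la_{n}>1.26$, so $r_{n}^{2}\ge 1.01$ and hence $h(r_{n})\le 0$ by the sign condition on $v$; therefore $\calS=h(r_{0})+\sum_{n\ge 1}h(r_{n})\le h(r_{0})$, contradicting $\calS=\calI+\calG > h(r_{0})$.

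The main obstacle is the actual construction of $v$ and $u$. One must choose the degree $N$ and the locations of the double roots in $[1.01,\infty)$ (for $v$) and in $[\sys(\X)^{2},\infty)$ (for $u$) carefully enough that the resulting linear system over the rationals admits a solution with the prescribed sign structure, that Sturm's theorem confirms no unwanted sign changes, and that the numerical inequality $\calI+\calG > h(r_{0})$ survives the interval-arithmetic error with a safe margin. The essential tension is to make $|v|$ small on $[1.01,\infty)$, so that the negative contribution to $\calI$ coming from $r^{2}\ge 1.01$ does not overwhelm the positive contribution from $r^{2}<1.01$, while keeping $v(-1/4)$ suitably bounded above; tuning these parameters is the delicate but by now standard optimization step.
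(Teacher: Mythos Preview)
Your proposal is correct and follows essentially the same strategy as the paper's proof: build an admissible pair $h,g$ of the Hermite/Laguerre type with $g\ge 0$ on the geodesic lengths and $h(r(\lambda))\le 0$ for $\lambda\ge 1.26$, then use the trace formula together with the previously established absence of eigenvalues in $(0,1.23)$ to force a positive count in $[1.23,1.26]$. The paper carries this out with $N=19$, imposes the slightly stronger condition $g(x)\ge 0$ for all $x\ge 0$, and phrases the endgame via a small constant $c$ bounding $h$ on $[a,b]$ to get $m(a,b)\ge(\calI+\calG-h(r_0))/c>0$; your contradiction framing (if $m(a,b)=0$ then $\calS\le h(r_0)<\calI+\calG$) is an equivalent and slightly cleaner way to reach the same conclusion.
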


\begin{proof}
Let $a = 1.23$ and $b = 1.26$.
We define the test functions $h,g$ as in \eqref{eq:gh_hermite}.
For this, we choose the sequence of rational numbers \(\{s_n\}_{n=0}^{19}\) such that the two polynomials
\begin{align*}
u(y) &= \sum_{n=0}^{19} s_n L_n^{(-1/2)}(y),\\
v(y) &= \sum_{n=0}^{19} (-1)^n s_n L_n^{(-1/2)}(y),
\end{align*}
meet the following conditions:

\begin{itemize}
    \item \(u(y)\) has a double root at \(4190\).
    \item \(v(y)\) possesses double roots at 1.87, 4.89, 9.46, 13.81, 19.97, 28.86, 39.99, 54.93 and additionally fulfills the conditions \(v(0)=1\) and \(v(b-1/4) = -1/10^5\).
\end{itemize}
There is a unique choice of \(\{s_n\}_{n=0}^{17}\).

See the Sage notebook \texttt{subinterval4-1.ipynb} for the necessary numerical computations below.
We can easily confirm that $u$ and $v$ have roots with the specified multiplicities at the designated locations.

We need to ensure that $h$ and $g$ satisfy the following conditions:
\begin{enumerate}
    \item $\ff(x)\geq 0$ for every $x \geq 0$.
    \item $\hatf(r(\la))\leq c=0.00141$ for every $\la \in [a, b]$.
    \item $\hatf(r(\la))\leq 0$ for every $\la \geq b$.
\end{enumerate}
Again, we apply Sturm's theorem to count the number of roots of a polynomial over a given interval.

(1) 
We can check that $u(0)> 0$ and $u$ has a unique root at \(4190\) in the interval $[0, \infty)$, which is a double root.
By confirming that $u''(4190)$ is greater than zero, we can establish that both $u$ and $\ff$ maintain non-negativity throughout the interval $[0,\infty)$.

(2)
We verify that \(2v'-2v\) does not have a root in \([a-1/4, b-1/4]\).
This implies that the maximum of $\hatf$ on the interval $[r(a),r(b)]$ is achieved at the endpoints. 
Using interval arithmetic, we can check that $\hatf(r(\la)) \leq  0.00141$ for $\la\in [a, b]$.
Note that 
$\hatf(r(a)) \approx 0.00140988$ and $\hatf(r(b)) \approx -6.03506\times 10^{-6}$.

(3)
We verify that \(v\) has precisely \(8\) distinct zeros, all of which are double zeros, in the interval \([b-1/4,\infty)\).
We can confirm that $v''<0$ at all those roots, and $v(b-1/4)<0$.
Consequently, we deduce that \(v\) does not change sign over \([b-1/4,\infty)\) and, hence, \(\hatf\leq 0 \) over the interval \([r(b),\infty)\).

With the above test functions $h,g$, we will establish the following inequalities using interval arithmeti:
\[
\mathcal{I} \geq 1.08088, \quad \mathcal{G} \geq 1.25565, \quad -\hatf(r_0) \geq -2.33608.
\]

To find a bound for $\calI$,
we separate the integral into two parts $\calI = \calI_1+\calI_2$ :
\begin{align*}
\calI_1 & = 2(g(\X)-1) \int_0^{100} r \hatf(r) \tanh(\pi r) dr,\\
\calI_2 & = 2(g(\X)-1) \int_{100}^\infty r \hatf(r) \tanh(\pi r) dr.
\end{align*}
Using interval arithmetic, we can find
\[
\calI_1 \geq 1.08088.
\]
For the remaining part, we have
\[
\calI_2 = 2(g(\X)-1) \int_{100}^\infty r \hatf(r) \tanh(\pi r) dr\leq 2(g(\X)-1) \int_{100}^\infty r \hatf(r)dr,
\]
which is negligible in comparison to $\calI_1$ as in the proof of \refprop{prop:subinterval3}.

Now we consider $\calG$.
Because $\ff(x)\geq 0$ for every $x \geq 0$,
\[
\calG \geq \frac{1}{\sqrt{2\pi}}\sum_{\{\gamma\} \in P} \frac{\ell(\gamma)}{2\sinh(\ell(\gamma)/2)} \ff(\ell(\gamma)),
\]
where $P$ is any subset of $\calC(\Gamma)$.
By employing interval arithmetic to sum over conjugacy classes with the two shortest lengths described in \refprop{prop:subinterval3}, we obtain
\[
\calG \geq 1.25565.
\]

Finally, we can check that $\hatf(r_0)\leq 2.33608$ using interval arithmetic.

Because $\hatf(r(\la))\leq 0$ for $\la \geq b$, the Selberg trace formula gives
\[
\calI+\calG = \calS = \sum_{n=1}^{\infty}\hatf(r_n)
\leq 
\hatf(r_0)+ \sum_{\substack{n=1 \\ \la_n\in [a,b]}}^{\infty}
\hatf(r_n).
\]
Let $m(a,b)$ be the number of eigenvalues of the Laplacian in the interval $[a,b]$ counting multiplicity.
From $\hatf(r(\la))\leq c$ for $\la\in [a,b]$, 
we obtain
\[
\calI+\calG - \hatf(r_0)  \leq  c m(a,b),
\]
which implies
\begin{equation}
m(a,b) \geq \frac{1}{c}\left(\calI+\calG - \hatf(r_0) \right) \geq 0.3237.
\end{equation}
This proves the proposition.
\end{proof}

\begin{prop}\label{prop:subinterval4-2}
The number of eigenvalues in the interval $[1.23,1.26]$ counting multiplicity is 7.
\end{prop}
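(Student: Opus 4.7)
The plan is to mirror the strategy of \refprop{prop:subinterval3}: employ the Selberg trace formula with a Hermite-based test function pair $h,g$ of the form \eqref{eq:gh_hermite}, chosen so that the sign of $\hatf$ is controlled on the spectral side while the sign of $\ff$ is controlled on the geometric side, and thereby obtain an upper bound on the counting function $m(a,b)$. Writing $a = 1.23$ and $b = 1.26$, I would select a sequence of rational coefficients $\{s_n\}_{n=0}^{N}$ (with $N$ around $20$, to be chosen experimentally) so that the associated polynomials $u$ and $v$ satisfy: (i) $u(y) \leq 0$ for $y \geq \sys(\X)^2$, forcing $\ff(x) \leq 0$ for $x \geq \sys(\X)$; (ii) $\hatf(r(\la)) \geq c$ for $\la \in [a,b]$ for an explicit constant $c > 0$; and (iii) $\hatf(r(\la)) \geq 0$ for all $\la \geq b$. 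These conditions translate into prescribed double roots of $u$ on $[\sys(\X)^2,\infty)$ and of $v$ on $[b-1/4,\infty)$, together with normalization conditions at $a - 1/4$ and $b - 1/4$, yielding a unique rational solution for $\{s_n\}$. The required sign properties are verified rigorously using Sturm's theorem, just as in the previous subsections.

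With such $h, g$ in place, and invoking \refprop{prop:subinterval1}--\refprop{prop:subinterval3}, which guarantee $\la_n \geq 1.23$ for every $n \geq 1$, condition (iii) ensures $\hatf(r_n) \geq 0$ for every $n \geq 1$. The Selberg trace formula then gives
\[
\calI + \calG \;=\; \calS \;\geq\; \hatf(r_0) \,+\, c \cdot m(a,b),
\]
so $m(a,b) \leq \bigl(\calI + \calG - \hatf(r_0)\bigr)/c$. I would then bound each quantity on the right using interval arithmetic: $\calI$ is split at a large cutoff (say $r=100$), with the tail controlled by the explicit antiderivative of $r\,v(r^2) e^{-r^2/2}$; $\calG$ is bounded above using condition (i), which renders every term of $\calG$ nonpositive, so truncating the sum to the two shortest conjugacy classes from \refprop{prop:shortgeo} produces a valid upper bound (their contribution is negative and the discarded tail only makes $\calG$ smaller); and $\hatf(r_0) = v(-1/4)\,e^{1/8}$ is bounded below directly.

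If the coefficients $\{s_n\}$ are chosen so that the resulting numerical upper bound is strictly less than $8$, then the integrality of $m(a,b)$ forces $m(a,b) \leq 7$. Combined with \refprop{prop:subinterval4}, which places an eigenvalue in $[a,b]$, and Lemma~\ref{prop:mult7}, which asserts that this eigenvalue $\la_1$ has multiplicity at least $7$, we obtain $m(a,b) = 7$ as claimed.

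The principal obstacle is the numerical tuning of the parameters $\{s_n\}$. One must simultaneously keep $c$ large enough that the denominator is not dwarfed by the main term $\hatf(r_0)$ in the numerator, keep $\hatf(r_0)$ sizable so as to absorb the contribution of $\calI$, and arrange that $\ff$ does not change sign past $\sys(\X)$. Because $u$ and $v$ are linked through the sign-flipping coefficients $(-1)^n s_n$, constraints on one polynomial interact nontrivially with those on the other, and finding root placements that deliver an upper bound strictly below $8$ will likely require several rounds of experimentation followed by a rigorous Sturm-based verification.
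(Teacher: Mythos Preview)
Your proposal is correct and follows essentially the same route as the paper: a Hermite--Laguerre test pair of the form \eqref{eq:gh_hermite} with $N=17$, sign conditions on $u$ past $\sys(\X)^2$ and on $v$ past $b-1/4$ verified via Sturm's theorem, and interval-arithmetic bounds on $\calI$, $\calG$, $\hatf(r_0)$ yielding $m(a,b)<8$, which together with \refprop{prop:subinterval4} and Lemma~\ref{prop:mult7} forces $m(a,b)=7$. The only slight deviation is that the paper enforces $u\le 0$ on $[\sys(\X)^2,\infty)$ not solely via double roots but via one simple root at $33$ (just below $\sys(\X)^2$) followed by a double root at $624$, which is a convenient way to flip the sign of $u$ exactly once.
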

\begin{proof}
Let $a = 1.23$ and $b=1.26$. 
We define the test functions $h,g$ as in \eqref{eq:gh_hermite}.
We choose the sequence of rational numbers \(\{s_n\}_{n=0}^{17}\) such that the two polynomials
\begin{align*}
u(y) &= \sum_{n=0}^{17} s_n L_n^{(-1/2)}(y),\\
v(y) &= \sum_{n=0}^{17} (-1)^n s_n L_n^{(-1/2)}(y),
\end{align*}
meet the following conditions:
\begin{itemize}
    \item \(u(y)\) has a simple root at \(33\) and a double root at \(624\).
    \item \(v(y)\) possesses double roots at 1.91, 4.97, 9.92, 15.56, 22.31, 32.18, 46.81 and additionally fulfills the condition \(v(a-1/4) = 1\).
\end{itemize}
Again, there is a unique choice of \(\{s_n\}_{n=0}^{17}\).

See the Sage notebook \texttt{subinterval4-2.ipynb} for the necessary numerical computations below.
We can easily confirm that $u$ and $v$ have roots with the specified multiplicities at the designated locations.

We need to ensure that they satisfy the following conditions:
\begin{enumerate}
    \item $\ff(x)\leq 0$ for every $x \geq \sys(\X)$.
    \item $\hatf(r(\la))\geq c=0.55299$ for every $\la \in [a, b]$.
    \item $\hatf(r(\la))\geq 0$ for every $\la \geq b$.
\end{enumerate}
We can verify these statements similarly to the method described in the proof of \refprop{prop:subinterval3}.
Also we have the following inequalities:
\[
\mathcal{I} \leq 16.242, \quad \mathcal{G} \leq -0.162, \quad -\hatf(r_0) \leq -11.826.
\]

Because $\hatf(r(\la))\geq 0$ for $\la \geq b$, the Selberg trace formula gives
\[
\calI+\calG = \calS  =  \sum_{n=1}^{\infty}\hatf(r_n) \geq
\hatf(r_0) + \sum_{\substack{n=1 \\ \la_n\in [a,b]}}^{\infty}\hatf(r_n).
\]
From $\hatf(r(\la))\geq c$ for $\la\in [a,b]$, 
we obtain
\[
\calI+\calG - \hatf(r_0)  \geq  c m(a,b)
\]
where $m(a,b)$ denote the number of eigenvalues of the Laplacian in the interval $[a,b]$ counting multiplicity.
Therefore, $m(a,b)$ is less than or equal to
\begin{equation}
\frac{1}{c}\left(\calI+\calG - \hatf(r_0) \right) \leq \frac{1}{0.553}(16.242 - 0.162 - 11.826)< 7.70.
\end{equation}
This implies that the multiplicity is less than 8.
From \refprop{prop:subinterval4}, we know the multiplicity is positive.
By Lemma~\ref{prop:mult7}, it must be 7, which proves the proposition.
\end{proof}

\section{Numerical determination of the eigenspace associated to $\la_1$}
Having established that the first eigenvalue of the Fricke-Macbeath surface has a multiplicity of 7, our objective in this section is to numerically determine which 7-dimensional representation of $\Aut(\X)=\Delp/\Gamma$ corresponds to the eigenspace associated to $\la_1$.
There are four 7-dimensional irreducible representations of $\Aut(\X)\cong \PSL_2(8)$.
Refer to Table~\ref{table:character_table} for the character table of $\Aut(\X)$.
You can find the GAP code in Appendix~\ref{sec:appB} to obtain the character table.
See~\cite[p. 6]{MR0827219} for more information about the character table.
In the character table, \( A, B, C, D, E, \) and \( F \) are defined as follows:
\begin{equation} \label{eq:abcdef}
\begin{aligned}
A &= \zeta_7^3 + \zeta_7^4, & B &= \zeta_7^2 + \zeta_7^5, & C &= \zeta_7 + \zeta_7^6, \\
D &= -\zeta_9^4 - \zeta_9^5, & E &= -\zeta_9^2 - \zeta_9^7, \\
F &= \zeta_9^2 + \zeta_9^4 + \zeta_9^5 + \zeta_9^7.
\end{aligned}
\end{equation}
Here, $\zeta_n = e^{2\pi \i /n}$.

\begin{table}[h]
    \centering
    \begin{tabular}{l|ccccccccc}
          & \textbf{1A} & \textbf{2A} & \textbf{3A} & \textbf{7A} & \textbf{7B} & \textbf{7C} & \textbf{9A} & \textbf{9B} & \textbf{9C} \\
         \hline
        \(\chi_1\) & 1 & 1 & 1 & 1 & 1 & 1 & 1 & 1 & 1 \\
        \(\chi_2\) & 7 & -1 & -2 & 0 & 0 & 0 & 1 & 1 & 1 \\
        \(\chi_3\) & 7 & -1 & 1 & 0 & 0 & 0 & \(D\) & \(F\) & \(E\) \\
        \(\chi_4\) & 7 & -1 & 1 & 0 & 0 & 0 & \(E\) & \(D\) & \(F\) \\
        \(\chi_5\) & 7 & -1 & 1 & 0 & 0 & 0 & \(F\) & \(E\) & \(D\) \\
        \(\chi_6\) & 8 & 0 & -1 & 1 & 1 & 1 & -1 & -1 & -1 \\
        \(\chi_7\) & 9 & 1 & 0 & \(A\) & \(C\) & \(B\) & 0 & 0 & 0 \\
        \(\chi_8\) & 9 & 1 & 0 & \(B\) & \(A\) & \(C\) & 0 & 0 & 0 \\
        \(\chi_9\) & 9 & 1 & 0 & \(C\) & \(B\) & \(A\) & 0 & 0 & 0
    \end{tabular}
    \caption{Character table of \(\Aut(\X) \cong \PSL_2(8)\); the values of \( A, B, C, D, E, \) and \( F \) are given in \eqref{eq:abcdef}}
    \label{table:character_table}
\end{table}

GAP enumerates representative elements from each conjugacy class as follows:
\begin{enumerate}
\item[\textbf{1A.}] $\id$
\item[\textbf{2A.}] \( a^2 b (b a^2)^2 b^3 a^2 b \)
\item[\textbf{3A.}] \( a \)
\item[\textbf{7A.}] \( a^2 b (b a^2)^2 b^3 a^2 \)
\item[\textbf{7B.}] \( b^4 a^2 (b a^2 b)^2 \)
\item[\textbf{7C.}] \( a^2 b^3 (b a^2)^2 b^2 \)
\item[\textbf{9A.}] \( (a b^2 a)^2 a b^3 a \)
\item[\textbf{9B.}] \( a (b a^2 b)^2 b^2 a^2 \)
\item[\textbf{9C.}] \( a b^2 (b^2 a^2)^2 b \)
\end{enumerate}
For the actions of the generators \( a \) and \( b \) on \( \disk \), recall \eqref{eq:matrixA} and \eqref{eq:matrixB}.

It is noteworthy that the outer automorphism group of \( \PSL_2(8) \) has order 3.
An outer automorphism interchanges the conjugacy classes of elements of order 9 with each other and similarly for those of order 7.
Consequently, there is an inherent ambiguity in assigning specific labels like \textbf{9A}, \textbf{9B}, \textbf{9C} to elements of order 9, and similarly, \textbf{7A}, \textbf{7B}, \textbf{7C} to those of order 7.
Throughout this section, we will adhere to the aforementioned convention for naming conjugacy classes.

Here we present a numerical method to compute the character of a representation obtained from a set of eigenfunctions. 
Assume that we have a set of eigenfunctions, denoted as \(L = \{f_1,\dots, f_n\}\), which have close eigenvalues. 
These eigenfunctions are expected to form a basis for a representation of \(\Aut(\X)\), which may not necessarily be irreducible.

One of the core assumptions in our approach is that we can approximate the value \(f_i(z)\) for a given point \(z \in \F\).
For instance, by employing the finite element method on \(\F\), along with periodic boundary conditions determined by side-pairing transformations, we can achieve such approximations for eigenfunctions of hyperbolic surfaces.
The application of these methods to the Bolza surface, a genus 2 Riemann surface with the highest number of symmetries, has been introduced in~\cite{MR1028714}.
Cook's dissertation~\cite{cook2018} provides FreeFEM++~\cite{MR3043640} code to compute low-lying eigenvalues and eigenfunctions for certain compact Riemann surfaces of low genera, albeit not including the Fricke-Macbeath surface.
For further details, see appendices C, D, and E in~\cite{cook2018}.
Meanwhile, an alternative approach detailed in~\cite{MR3009726} delivers exceptionally accurate numerical eigenvalues and eigenfunctions.
Although this method promises accuracy, its implementation seems to be quite challenging.

For our numerical evaluations, we have utilized Mathematica~\cite{Mathematica}.
Within this environment, the \texttt{NDEigensystem} function serves as an instrumental tool for determining the eigenvalues and eigenfunctions of differential operators over a given region.
A distinctive capability of this function is its adeptness in handling periodic boundary conditions.
With the \texttt{PeriodicBoundaryCondition} option, the function allows specific boundary pairs to be treated as if they were identical.

To begin, we initiate the following preliminary procedures:
\begin{itemize}
    \item Select an interior point \(z_0\) within \(\F\) that is not fixed by any element of \(\Del\). Additionally, ensure that the orbits of \(z_0\) under \(\Del\) do not intersect the boundary of \(\F\). 
    An appropriate point can readily be located within the fundamental triangle of \(\Del\), as shown in Figure~\ref{fig:237_tri}.
    \item Determine representatives \(S=\{M_1,\dots, M_d\}\subseteq \Delp\) for 
    \(\Aut(\X) = \Delp/\Gamma\)
    such that each \(z_k : = M_k z_0\) lies within \(\F\) for all $k\in \{1,\dots, d\}$. Here, $d=|\Aut(\X)|$. 
    We can find such a list, for example, as follows :
    \begin{itemize}
        \item Enumerate elements of the Coxeter group \(\Del\) based on length.
        \item Collect all elements \(M\in \Del\) of even length such \(M z_0\) is contained in the interior of \(\F\).
        \item Given our choice of \(z_0\), these collected elements serve as representatives for \(\Aut(\X) = \Delp/\Gamma\).
    \end{itemize}
    \item For each eigenfunction \(f\) in \(L\), compute the approximate values of \(f(z_k)\) for all \(k\in \{1,\dots, d\}\).
\end{itemize}

Given $M_l\in S$ and $f_i \in L$,  we want to express the transformed function $M_l\cdot f_i$ as a linear combination of $\{f_1,\dots, f_n\}$, meaning that
\[    
(M_l\cdot f_i)(z) = f_i(M_l^{-1}z) = \sum_{j=1}^n \rho(M_l,i,j)f_j(z),\, z\in \F.
\]    
Here, the matrix $\left(\rho(M_l,i,j)\right)_{i,j}$ serves as the representation matrix for $M_l$.
Our goal is to approximate these matrix entries, treating $\{\rho(M_l,i,j)\}$ as the unknowns we want to determine.

For a given pair $(l,i)$, the equation
\begin{equation}
\sum_{j=1}^n \rho(M_l,i,j)f_j(z_k) = f_i(M_l^{-1}z_k), \, k=1,\dots, d
\label{eq:rho_equation}
\end{equation}
yields $d$ relations with $n$ unknowns $\{\rho(M_l,i,j) : j=1, \dots, n\}$.
Therefore, when $d$ significantly exceeds $n$, we obtain an overdetermined system of linear equations for 
those unknowns.
For our particular case, the values are $d=504$ and $n=7$.
To handle such overdetermined systems, a common approach is to use the least squares method.
This method seeks to minimize the sum of the squares of the residuals, which are the differences between the left and the right sides of \eqref{eq:rho_equation}.
By applying this method, we can approximate the values of $\{\rho(M_l,i,j)\}$.

To enhance the system's robustness further, one could perturb the initial point $z_0$ to obtain $z_0'$ and then consider its orbits $z_k'$ under the action of $\Aut(\X)$. 
This leads to an extended set of relations, in addition to those in \eqref{eq:rho_equation}:
\[
\sum_{j=1}^n \rho(M_l,i,j)f_j(z_k') = f_i(M_l^{-1}z_k'), \, k=1,\dots, d.
\]
By doing so, we can also increase the number of equations, even when $d$ is small compared to $n$.

Note that on the right-hand side of \eqref{eq:rho_equation}, $M_l^{-1}z_k=M_l^{-1}M_k z_0$ may not be a point in $\F$.
Since our approach relies on knowing approximate values of $f_i(z_j)$ for orbits $z_j\in \F$ of $z_0$, calculating $f_i(M_l^{-1}z_k)$ presents a challenge.
Given $(l, k)$, we need to find $p\in \{1,\dots, d\}$ and $\gamma\in \Gamma$ such that
\begin{equation}
\gamma M_l^{-1}M_k = M_p.
\label{eq:findp}
\end{equation}
Once we find $(p,\gamma)$, we can simply set $f_i(M_l^{-1}z_k)=f_i(M_p z_0) = f_i(z_p)$ due to the $\Gamma$-invariance of $f_i$.

Equation \eqref{eq:findp} can be equivalently stated as:
\[
\gamma M_l^{-1}M_kz_0 = M_p z_0.
\]
This formulation is essentially about moving the point $M_l^{-1}M_kz_0 = M_l^{-1}z_k$ into the fundamental domain $\F$ through the action of $\Gamma$.
Notably, $\F$ is also the Dirichlet domain $D(0)$ from Theorem~\ref{thm:dirichlet}.
In~\cite{MR2541438}, Voight explored various algorithms related to such fundamental domains for Fuchsian groups.
Of particular interest is an algorithm that, when given a point \(z \in \disk\), returns a point \(z' \in \F\) and an element \(\gamma \in \Gamma\) such that \(z' = \gamma(z)\). 
This algorithm is detailed further in~\cite[Algorithm 4.3]{MR2541438}.
In the context of our study, we begin by setting \(z = M_l^{-1}z_k\).
Subsequently, we identify a side-pairing transformation \(\gamma_i\) such that \(\gamma_i z\) is closer to the fundamental polygon \(\F\) than the original location of \(z\), meeting the condition:
\[
\distD(0, \gamma_i z) \leq \distD(0, z).
\]
As long as the point $z$ is not located inside the fundamental domain $\F$, such a transformation \(\gamma_i\) exists.
This procedure can be iterated until we identify a \(\Gamma\)-orbit of \(z\) inside \(\F\).
By employing this method, we can effectively relocate $z$ to $\F$ using the generators $\{\gamma_i\}$ of $\Gamma$.
While determining \(\gamma\) and \(M_p\) for \eqref{eq:findp} may involve numerical calculations, we can verify its correctness in a symbolic manner using \eqref{eq:coxrelation}.

Once we apply the method described above to determine the 7-dimensional representation $V_{\la_1}$ formed by eigenfunctions with eigenvalue $\la_1$, we obtain the following numerical values for its character $\chi_{\la_1}$:
\[
\begin{array}{c|c|c|c|c|c|c|c|c}
\textbf{1A} & \textbf{2A} & \textbf{3A} & \textbf{7A} & \textbf{7B} & \textbf{7C} & \textbf{9A} & \textbf{9B} & \textbf{9C} \\
\hline
7.0000 & -1.0163 & 0.9614 & 0.0778 & 0.0231 & 0.0059 & -1.4985 & -0.3453 & 1.8829 \\
\end{array}
\]
Recall that the inner product of two characters $\chi, \chi'$ of $G$ are defined by
\[ \langle \chi, \chi' \rangle = \frac{1}{|G|} \sum_{g \in G} \chi(g) \ol{\chi'(g)}. \]
When we compute the inner product of characters $\chi_{\la_1}$ and $\chi_i$, we obtain the following approximate values of $m_i = \langle \chi_{\la_1}, \chi_i \rangle$ for each $i$:
\[
\begin{array}{c|c|c|c|c|c|c|c|c|c}
\hline
i & 1 & 2 & 3 & 4 & 5 & 6 & 7 & 8 & 9 \\
\hline
m_i & 0.0133 & 0.0150 & 0.0043 & -0.0037 & 0.9927 & 0.0152 & -0.0183 & -0.0119 & 0.0088 \\
\hline
\end{array}
\]
Note that $m_i$ is the multiplicity of $\chi_i$ in the decomposition of $\chi_{\la_1}$ into irreducible representations.
Given that these values are close approximations of integers, it becomes straightforward to guess their exact integer values.
Hence we propose the following conjecture:
\begin{conjecture}\label{conj:rep7}
Let $V_{\la_1}$ be the 7-dimensional representation of $\Aut(\X)$ formed by eigenfunctions with eigenvalue $\la_1$. 
The character of $V_{\la_1}$ is equal to $\chi_5$.
\end{conjecture}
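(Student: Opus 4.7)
The plan is to identify which of the four 7-dimensional irreducible characters $\chi_2, \chi_3, \chi_4, \chi_5$ of $\PSL_2(8)$ equals $\chi_{\la_1}$, by evaluating $\chi_{\la_1}$ with rigorous error bounds on a few carefully chosen conjugacy classes. First I would rule out $\chi_2$: since $\chi_2(\mathbf{3A}) = -2$ while $\chi_3(\mathbf{3A}) = \chi_4(\mathbf{3A}) = \chi_5(\mathbf{3A}) = 1$, it suffices to certify that $\chi_{\la_1}(\mathbf{3A}) > 0$, which requires only a single representation matrix of an order-3 element computed at modest precision.

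Next I would distinguish among $\chi_3, \chi_4, \chi_5$. These three characters are cyclically permuted by the outer automorphism group of $\PSL_2(8)$, and the distinguishing data lives on the classes $\mathbf{9A}, \mathbf{9B}, \mathbf{9C}$ via the Galois-conjugate values $D, E, F$ of \eqref{eq:abcdef}. Numerically $D \approx 1.879$, $E \approx -0.347$, $F \approx -1.532$, pairwise separated by at least $|E - F| \approx 1.18$. Once representatives for $\mathbf{9A}, \mathbf{9B}, \mathbf{9C}$ have been fixed as in the paper, proving Conjecture~\ref{conj:rep7} reduces to certifying the single inequality $|\chi_{\la_1}(\mathbf{9A}) - F| < 0.6$; the corresponding statements for $\mathbf{9B}$ and $\mathbf{9C}$ then serve as consistency checks.

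To carry out this certification, I would combine the finite element approach sketched in Section~5 with a posteriori error bounds on the computed eigenfunctions. The scheme produces an approximately $L^2$-orthonormal basis $\{\tilde f_1,\dots,\tilde f_7\}$ of $V_{\la_1}$, computes the matrix entries $\rho(M,i,j)$ by least squares from sample points $z_k \in \F$, and sums diagonal entries to obtain the character. The symbolic rewriting rules \eqref{eq:coxrelation} together with Voight's reduction algorithm~\cite[Algorithm 4.3]{MR2541438} ensure that the identification $\gamma M_l^{-1} M_k = M_p$ in \eqref{eq:findp} is verified exactly rather than merely numerically, so the only remaining source of error is the evaluation of $\tilde f_i$ at the finitely many orbit points. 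A certified spectral gap between $\la_1$ and $\la_8$, obtainable by further refining the Selberg trace formula arguments from Section~4, then converts bounds on the finite element residuals $\|(\Delta - \la_1)\tilde f_i\|$ into enclosures of $\rho(M,i,j)$ via the Davis--Kahan $\sin\Theta$ theorem.

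The main obstacle will be producing rigorous pointwise enclosures of eigenfunction values at the 504 orbit points. Unlike the eigenvalue enclosures in Section~4, which fall directly out of the Selberg trace formula, controlling eigenfunctions requires either a validated finite element analysis on the 42-sided hyperbolic polygon $\F$ with its intricate periodic boundary identifications, or replacement of finite elements by the higher precision spectral method of Strohmaier and Uski~\cite{MR3009726}. A possible shortcut exploits the fact that $\chi_{\la_1}$ must take values in a $\mathrm{Gal}(\mathbb{Q}(\zeta_9)/\mathbb{Q})$-orbit of characters: once $\chi_2$ is excluded, any rigorous one-sided bound of the form $\chi_{\la_1}(\mathbf{9A}) < -1$ would already force $\chi_{\la_1} = \chi_5$, and such a one-sided bound is markedly easier to extract from approximate eigenfunction data than a full two-sided enclosure.
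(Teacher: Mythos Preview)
The paper does not prove this statement: it is explicitly a \emph{Conjecture}, supported only by non-rigorous finite element computations (Mathematica's \texttt{NDEigensystem}) of approximate character values and their inner products with the $\chi_i$. There is therefore no ``paper's own proof'' to compare against. What you have written is not a proof either, but a research plan for turning the paper's numerical evidence into a rigorous argument; as such it goes well beyond what the paper attempts.

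Your strategy is sound at the level of character arithmetic: the reduction to $\chi_{\la_1}(\mathbf{3A})>0$ to exclude $\chi_2$, and then to a single inequality such as $\chi_{\la_1}(\mathbf{9A})<-1$ to pin down $\chi_5$ among the Galois-conjugate triple, is correct and efficient. The symbolic verification of \eqref{eq:findp} via the rewriting rules is also in the spirit of the paper.

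The genuine gap, which you yourself flag, is the step from approximate eigenfunction data to certified bounds on $\chi_{\la_1}$. Invoking Davis--Kahan requires (i) a rigorous lower bound on the spectral gap $\la_8-\la_7$ and (ii) rigorous control of the residuals $\|(\Delta-\la_1)\tilde f_i\|$ in $L^2$. Neither is available in the paper: Section~4 locates $\la_1$ but says nothing rigorous about $\la_8$, and the finite element output comes with no certified error bounds. Obtaining (i) would require new trace-formula inequalities of the type in Propositions~\ref{prop:subinterval3}--\ref{prop:subinterval4-2} but targeted at a higher window, and (ii) would require a validated FEM analysis on the 42-gon $\F$ with its periodic identifications, or a validated implementation of the method of~\cite{MR3009726}. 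Moreover, Davis--Kahan controls the $L^2$ angle between the true and approximate eigenspaces; passing from this to a bound on the trace of a representation matrix computed by least squares from \emph{pointwise} samples needs an additional argument (e.g.\ an $L^\infty$ bound on eigenfunctions via elliptic regularity and a certified $L^2$ bound). Until these pieces are supplied, the proposal remains a plausible outline rather than a proof.
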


From our numerical computations, we have obtained the following 7 approximate values for $\la_1, \dots, \la_7$:
\[1.25065,1.25336,1.25336,1.26103,1.26103,1.28662,1.38342.
\]
Despite the lack of precision in these values, it is evident that we can still obtain convincing evidence for the true values of $\chi_{\la_1}$.
Similarly, we have obtained the following 9 approximate values for $\la_8, \dots, \la_{16}$:
\[
2.05821,2.0602,2.06123,2.06488,2.0661,2.08243,2.09857,2.09857,2.2991.
\]
While the accuracy of these values remains uncertain, the eigenfunctions for these eigenvalues appear to span a 9-dimensional representation of $\Aut(\X)$.
Its character seems to be a good approximation to $\chi_9$ in Table~\ref{table:character_table}.

\section*{Acknowledgements}
This work was supported by a KIAS Individual Grant (SP067302) via the June E Huh Center for Mathematical Challenges at Korea Institute for Advanced Study.


\appendix
\renewcommand{\thesection}{\Alph{section}}

\section{Word processing in $\Delta(2,3,7)$}\label{sec:appA}
The following Mathematica code defines a function named \texttt{reduce}.
For a given input, the function repeatedly employs a collection of transformation rules to a list, simplifying it according to the rules given in \eqref{eq:coxrelation} and the relations $s_i^2=1$.
\begin{lstlisting}[frame=single]
reduce[{a___,1,1,b___}]:=reduce[{a,b}]
reduce[{a___,2,2,b___}]:=reduce[{a,b}]
reduce[{a___,3,3,b___}]:=reduce[{a,b}]
reduce[{a___,3,1,b___}]:=reduce[{a,1,3,b}]
reduce[{a___,3,2,3,b___}]:=reduce[{a,2,3,2,b}]
reduce[{a___,3,2,1,3,b___}]:=reduce[{a,2,3,2,1,b}]
reduce[{a___,3,2,1,2,3,2,b___}]:=reduce[{a,2,3,2,1,2,3,b}]
reduce[{a___,2,1,2,1,2,1,2,b___}]:=reduce[{a,1,2,1,2,1,2,1,b}]
reduce[{a___,3,2,1,2,1,3,2,1,2,1,2,1,b___}]:=reduce[{a,2,3,2,1,2,1,3,2,1,2,1,2,b}]
reduce[a_] := a
\end{lstlisting}
To illustrate, the following command yields an empty list:
\begin{lstlisting}[frame=single]
reduce[{2, 3, 2, 3, 2, 3}]
\end{lstlisting}
demonstrating that $(s_2s_3)^3=1$.
Refer to \cite{vanderKallen} for a way to derive such rules for Coxeter groups.


\section{GAP code for the automorphism group of the Fricke-Macbeath surface}\label{sec:appB}
The GAP code provided below computes the order of the group $G$ in Subsection~\ref{subsec:hurwitzsurfaces} defined by the presentation
$\langle a, b \mid a^{3} = b^{7} = (ab)^{2} = (a^{-1} b^3 a^{-1} b a^{-1} b^3)^2 = 1\rangle$:
\begin{lstlisting}[label=lst:gapcode, frame=single]
f := FreeGroup("a", "b");
a := f.1; b := f.2;
t := a^(-1) * b^3 * a^(-1) * b * a^(-1) * b^3;
G := f / [a^3, b^7, (a*b)^2, t^2];
Size(G);
\end{lstlisting}
The execution of this code yields 504.

Building on the above code, one can obtain the character table of $G$ using the following commands:
\begin{lstlisting}[frame=single]
T:= CharacterTable(G);
Display(T);
\end{lstlisting}

To find the representatives for each of the conjugacy classes, one can use the following commands:
\begin{lstlisting}[frame=single]
conjClasses := ConjugacyClasses(T);

for i in [1..Length(conjClasses)] do
    class := conjClasses[i];
    rep := Representative(class);
    Print(rep, "\n");  
od;
\end{lstlisting}

\bibliographystyle{amsalpha}
\bibliography{main}

\newcommand{\etalchar}[1]{$^{#1}$}
\providecommand{\bysame}{\leavevmode\hbox to3em{\hrulefill}\thinspace}
\providecommand{\MR}{\relax\ifhmode\unskip\space\fi MR }
\providecommand{\MRhref}[2]{%
  \href{http://www.ams.org/mathscinet-getitem?mr=#1}{#2}
}
\providecommand{\href}[2]{#2}
\begin{thebibliography}{CCN{\etalchar{+}}85}

\bibitem[AS89]{MR1028714}
R.~Aurich and F.~Steiner, \emph{Periodic-orbit sum rules for the
  {H}adamard-{G}utzwiller model}, Phys. D \textbf{39} (1989), no.~2-3,
  169--193.

\bibitem[Bea83]{MR0698777}
A.~F. Beardon, \emph{The geometry of discrete groups}, Graduate Texts in
  Mathematics, vol.~91, Springer-Verlag, New York, 1983.

\bibitem[BLS20]{MR4186122}
A.~R. Booker, M.~Lee, and A.~Str\"{o}mbergsson, \emph{Twist-minimal trace
  formulas and the {S}elberg eigenvalue conjecture}, J. Lond. Math. Soc. (2)
  \textbf{102} (2020), no.~3, 1067--1134.

\bibitem[BS07]{MR2338122}
A.~R. Booker and A.~Str\"{o}mbergsson, \emph{Numerical computations with the
  trace formula and the {S}elberg eigenvalue conjecture}, J. Reine Angew. Math.
  \textbf{607} (2007), 113--161.

\bibitem[Bur99]{burnside1899}
W.~Burnside, \emph{Note on the simple group of order 504}, Math. Ann.
  \textbf{52} (1899), 174--176.

\bibitem[Bus10]{MR2742784}
P.~Buser, \emph{Geometry and spectra of compact {R}iemann surfaces}, Modern
  Birkh\"{a}user Classics, Birkh\"{a}user Boston, Ltd., Boston, MA, 2010,
  Reprint of the 1992 edition.

\bibitem[CCN{\etalchar{+}}85]{MR0827219}
J.~H. Conway, R.~T. Curtis, S.~P. Norton, R.~A. Parker, and R.~A. Wilson,
  \emph{{$\Bbb{ATLAS}$} of finite groups}, Oxford University Press, Eynsham,
  1985, Maximal subgroups and ordinary characters for simple groups, With
  computational assistance from J. G. Thackray.

\bibitem[CE03]{MR1973059}
H.~Cohn and N.~Elkies, \emph{New upper bounds on sphere packings. {I}}, Ann. of
  Math. (2) \textbf{157} (2003), no.~2, 689--714.

\bibitem[CM57]{coxeter_moser_1957}
H.~S.~M. Coxeter and W.~O.~J. Moser, \emph{Generators and relations for
  discrete groups}, Ergebnisse der Mathematik und ihrer Grenzgebiete, vol.~14,
  Springer, Berlin, 1957.

\bibitem[Con90]{MR1041434}
M.~Conder, \emph{Hurwitz groups: a brief survey}, Bull. Amer. Math. Soc. (N.S.)
  \textbf{23} (1990), no.~2, 359--370.

\bibitem[Coo18]{cook2018}
J.~Cook, \emph{Properties of eigenvalues on riemann surfaces with large
  symmetry groups}, Ph.D. thesis, Loughborough University, 2018,
  arXiv:2108.11825.

\bibitem[FBP21]{FBP21}
M.~Fortier~Bourque and B.~Petri, \emph{The {K}lein quartic maximizes the
  multiplicity of the first positive eigenvalue of the {L}aplacian}, 2021,
  preprint, arxiv:2111.14699. To appear in Journal of Differential Geometry.

\bibitem[FBP23]{FBP23}
\bysame, \emph{Linear programming bounds for hyperbolic surfaces}, 2023,
  preprint, arXiv:2302.02540.

\bibitem[FK90]{klein1890vorlesungen}
R~Fricke and F.~Klein, \emph{Vorlesungen {\"u}ber die theorie der elliptischen
  modulfunctionen}, vol.~1, B.G. Teubner, 1890.

\bibitem[Fri99]{MR1511059}
R.~Fricke, \emph{Ueber eine einfache {G}ruppe von 504 {O}perationen}, Math.
  Ann. \textbf{52} (1899), no.~2-3, 321--339.

\bibitem[GAP22]{GAP4}
The GAP~Group, \emph{{GAP -- Groups, Algorithms, and Programming, Version
  4.12.2}}, 2022.

\bibitem[Hec12]{MR3043640}
F.~Hecht, \emph{New development in freefem++}, J. Numer. Math. \textbf{20}
  (2012), no.~3-4, 251--265.

\bibitem[Her94]{MR1261121}
S.~M. Hermiller, \emph{Rewriting systems for {C}oxeter groups}, J. Pure Appl.
  Algebra \textbf{92} (1994), no.~2, 137--148.

\bibitem[Hum90]{MR1066460}
J.~E. Humphreys, \emph{Reflection groups and {C}oxeter groups}, Cambridge
  Studies in Advanced Mathematics, vol.~29, Cambridge University Press,
  Cambridge, 1990.

\bibitem[Hur93]{hurwitz1893}
A.~Hurwitz, \emph{Über algebraische gebilde mit eindeutigen transformationen
  in sich}, Mathematische Annalen \textbf{41} (1893), no.~3, 403--442.

\bibitem[Inc20]{Mathematica}
Wolfram~Research{,} Inc., \emph{Mathematica, {V}ersion 12.1}, 2020, Champaign,
  IL.

\bibitem[Jen84]{jenni1984}
F.~Jenni, \emph{Über den ersten {E}igenwert des {L}aplace-{O}perators auf
  ausgewählten {B}eispielen kompakter {R}iemannscher {F}lächen}, Commentarii
  Mathematici Helvetici \textbf{59} (1984), no.~1, 193--203.

\bibitem[JW16]{MR3467692}
G.~A. Jones and J.~Wolfart, \emph{Dessins d'enfants on {R}iemann surfaces},
  Springer Monographs in Mathematics, Springer, Cham, 2016.

\bibitem[Kle79]{Klein79}
F.~Klein, \emph{{\"U}ber die transformationen siebenter ordnung der
  elliptischen funktionen}, Math. Annalen \textbf{14} (1879), 428--471.

\bibitem[KMP21]{kravchuk2023automorphic}
P.~Kravchuk, D.~Mazac, and S.~Pal, \emph{Automorphic spectra and the conformal
  bootstrap}, 2021, preprint, arXiv:2111.12716.

\bibitem[Lee65]{MR0174621}
J.~Leech, \emph{Generators for certain normal subgroups of {$(2,\,3,\,7)$}},
  Proc. Cambridge Philos. Soc. \textbf{61} (1965), 321--332.

\bibitem[Lev99]{MR1722410}
S.~Levy (ed.), \emph{{\it {T}he eightfold way}}, Mathematical Sciences Research
  Institute Publications, vol.~35, Cambridge University Press, Cambridge, 1999.

\bibitem[Mac65]{macbeath1965}
A.M. Macbeath, \emph{On a curve of genus 7}, Proc. London Math. Soc.
  \textbf{15} (1965), no.~9, 527--542.

\bibitem[Mac99]{MR1722414}
A.~M. Macbeath, \emph{Hurwitz groups and surfaces}, The eightfold way, Math.
  Sci. Res. Inst. Publ., vol.~35, Cambridge Univ. Press, Cambridge, 1999,
  pp.~103--113.

\bibitem[Mag74]{MR0352287}
W.~Magnus, \emph{Noneuclidean tesselations and their groups}, Pure and Applied
  Mathematics, vol. Vol. 61, Academic Press, New York-London, 1974.

\bibitem[Mas71]{maskit1971}
B.~Maskit, \emph{On poincare's theorem for fundamental polygons}, Adv. Math.
  \textbf{7} (1971), 219--230.

\bibitem[Ser94]{MR1272022}
J.-P. Serre, \emph{A letter as an appendix to the square-root parameterization
  paper of {A}bhyankar}, Algebraic geometry and its applications ({W}est
  {L}afayette, {IN}, 1990), Springer, New York, 1994, pp.~85--88.

\bibitem[Shi67]{MR0204426}
G.~Shimura, \emph{Construction of class fields and zeta functions of algebraic
  curves}, Ann. of Math. (2) \textbf{85} (1967), 58--159.

\bibitem[SU13]{MR3009726}
A.~Strohmaier and V.~Uski, \emph{An algorithm for the computation of
  eigenvalues, spectral zeta functions and zeta-determinants on hyperbolic
  surfaces}, Comm. Math. Phys. \textbf{317} (2013), no.~3, 827--869.

\bibitem[{The}22]{PARI2}
{The PARI~Group}, Univ. Bordeaux, \emph{{PARI/GP version \texttt{2.15.2}}},
  2022, available from \url{http://pari.math.u-bordeaux.fr/}.

\bibitem[{The}23]{sagemath}
{The Sage Developers}, \emph{{S}agemath, the {S}age {M}athematics {S}oftware
  {S}ystem ({V}ersion 10.1)}, 2023, {\tt https://www.sagemath.org}.

\bibitem[vdK02]{vanderKallen}
W.~van~der Kallen, \emph{Computing some {KL}-polynomials for the poset of {$B
  \times B$}-orbits in group compactifications}, preprint, 2002.

\bibitem[Vog03]{Vogeler2003}
R.~Vogeler, \emph{On the {G}eometry of {H}urwitz {S}urfaces}, Ph.D. thesis,
  Florida State University, 2003.

\bibitem[Voi09]{MR2541438}
J.~Voight, \emph{Computing fundamental domains for {F}uchsian groups}, J.
  Th\'{e}or. Nombres Bordeaux \textbf{21} (2009), no.~2, 469--491.

\end{thebibliography}
\end{document}